\documentclass[finial,hidelinks,onefignum,onetabnum]{siamart251216}

\usepackage{lipsum}
\usepackage{amsfonts}
\usepackage{graphicx}
\usepackage{epstopdf}
\usepackage{algorithmic}
\usepackage{mathtools}

\usepackage{threeparttable} 
\usepackage{booktabs}
\usepackage{multirow}
\ifpdf
  \DeclareGraphicsExtensions{.eps,.pdf,.png,.jpg}
\else
  \DeclareGraphicsExtensions{.eps}
\fi

\newsiamremark{remark}{Remark}
\newsiamremark{hypothesis}{Hypothesis}
\crefname{hypothesis}{Hypothesis}{Hypotheses}
\newsiamthm{claim}{Claim}
\newsiamremark{fact}{Fact}
\crefname{fact}{Fact}{Facts}
\newtheorem{pro}{Problem}[section]

\headers{Nonlinear Kaczmarz methods for nonlinear systems}{R. Ding, D. Wang, and J. Zou}

\title{Average block nonlinear Kaczmarz methods with adaptive momentum for nonlinear systems of equations\thanks{Submitted to the editors DATE.
\funding{
The work of D. Wang was partially supported by National Natural Science Foundation of China under grants 12271463. 
The work of J. Zou was substantially supported by Hong Kong RGC General Research Fund 
		(projects 14306623
		and 14308322) and NSFC/Hong Kong RGC Joint Research Scheme (project N\_CUHK465/22).
}}}

\author{
	Renjie Ding\thanks{School of Mathematics and Computational Science,  Xiangtan University, Xiangtan, Hunan 411105, P.R. China. (\email{drjmath@smail.xtu.edu.cn}).}
	\and
	Dongling Wang\thanks{ Hunan Research Center of the Basic Discipline Fundamental Algorithmic Theory and Novel Computational Methods, School of Mathematics and Computational Science,  Xiangtan University, Xiangtan, Hunan 411105, P.R. China.
		(\email{wdymath@xtu.edu.cn}).}
	\and
	Jun Zou\thanks{Department of Mathematics, The Chinese University of Hong Kong Shatin, N.T., Hong Kong.
		(\email{zou@math.cuhk.edu.hk}).}
}

\usepackage{amsopn}

\begin{document}

\maketitle

\begin{abstract}
The Kaczmarz method is widely recognized as an efficient iterative algorithm for solving large-scale linear systems, owing to its simplicity and low memory requirements. However, the development of its nonlinear extensions for solving large-scale nonlinear systems has seen limited progress. In this work, we introduce a new family of momentum-accelerated averaging block nonlinear Kaczmarz methods tailored for large-scale nonlinear systems and ill-posed problems. Our contributions are twofold: (1) We develop an adaptive strategy for selecting step sizes and momentum coefficients, leading to a new average block nonlinear Kaczmarz method with adaptive momentum (ABNKAm). This algorithm achieves high computational efficiency by requiring only minimal inner-product computations per iteration, which significantly reduces both arithmetic complexity and memory usage. (2) We establish rigorous convergence 
of the ABNKAm under mild assumptions, proving that the method converges exponentially to the unique solution nearest to the initial point. Moreover, under suitable conditions, we provide a theoretical justification of acceleration of the proposed ABNKAm with momentum. Extensive numerical experiments demonstrate that ABNKAm outperforms existing nonlinear Kaczmarz variants in terms of both iteration count and computational time, with particularly notable gains in large-scale problems.
\end{abstract}

\begin{keywords}
Nonlinear systems, the averaging block nonlinear Kaczmarz method, heavy ball momentum, adaptive strategy
\end{keywords}

\begin{MSCcodes}
65H10, 90C06
\end{MSCcodes}

\section{Introduction}\label{sec:intro}
Consider solving the following system of nonlinear equations:
\begin{equation}\label{eq:eqs}
	F(x)=0,
\end{equation}
where \(F:\mathbb{R}^n\rightarrow\mathbb{R}^m\) is a nonlinear vector-valued function and \(x\in\mathbb{R}^n\) is an unknown vector. 
The system of nonlinear equations \(F\) can be rewritten in component form as \(F_i\) for \(i = 1,\cdots,m\), where \(F_i:\mathbb{R}^n\rightarrow\mathbb{R}\) denotes the $i$-th component of \(F\). 

Solving large-scale systems of the form \eqref{eq:eqs} arises widely and significantly in applications such as electrical impedance tomography, circuit design, nonlinear programming \cite{kaltenbacher2008iterative}, systems of nonlinear differential equations \cite{ortega2000iterative}, and machine learning \cite{chen2019homotopy, kawaguchi2016deep,liu2022loss}. Therefore, the efficient solution of the large-scale system of nonlinear equations \eqref{eq:eqs} is of utmost importance. Classical iterative methods, such as Newton-type methods, are widely used but often entail high computational costs or become inapplicable for large-scale or ill-conditioned problems. Thus, there is a pressing need for efficient, low-cost iterative methods. 
For the solution of large-scale linear systems, the Kaczmarz algorithm \cite{kaczmarz1937, strohmer2009randomized}, as a low cost and efficient algorithm, has been remarkably developed. When $F=Ax-b$, where $A\in\mathbb{R}^{m\times n}$ and $b\in\mathbb{R}^m$, the Kaczmarz method orthogonally projects the current iteration point $x_k$ onto the constraint set $\{x|A_{i_k}x=b_{i_k}\}$,  namely it updates the iteration as  
\begin{equation}\label{eq:Kacz}
	x_{k+1}=x_k-\frac{A_{i_k}x_k-b_{i_k}}{\|A_{i_k}\|_2^2}(A_{i_k})^T,
\end{equation}
where the index $i_k$ is selected via random or cyclic sampling from $[m]:=\{1,\dots,m\}$.
This approach significantly reduces the computational cost and storage requirements per iteration. 

The Kaczmarz method was extended for solving large-scale nonlinear systems \cite{wang2022nonlinear}, 
with a family of nonlinear Kaczmarz algorithms. These include the nonlinear Kaczmarz (NK), the nonlinear randomized Kaczmarz (NRK), and the nonlinear uniformly randomized Kaczmarz (NURK) methods. Notably, they established a connection between the NURK method and the stochastic gradient descent (SGD) method \cite{jin2020convergence, robbins1951stochastic}. These methods are based on the following iterative scheme:
\begin{align}\label{eq:nk}
	x_{k+1} = x_k - \frac{F_{i_k}(x_k)}{\|\nabla F_{i_k}(x_k)\|^2_2} \nabla F_{i_k}(x_k).
\end{align}
Here $\nabla F_{i_k}(x)$ denotes the gradient function of $F_{i_k}(x)$. However, selecting index sets from $[m]$ via random or cyclic order strategies may be inefficient and may lead to the selection of indices corresponding to small residual values. To address this limitation, several approximate greedy strategies have been proposed \cite{liu2025greedy, necoara2019faster, zhang2023maximum, zhang2024greedy}, which first construct a controlled index subset $\mathcal{J}_k$ and then select an index from $\mathcal{J}_k$ based on given probability.
To further accelerate the convergence of the NK method, a relaxation parameter $\alpha$ in the iterative formula \eqref{eq:nk} is introduced in \cite{liu2022nonlinear}  to enhance convergence. Specifically, the proposed iteration scheme is formulated as:
\begin{align}\label{eq:rngrk}
	x_{k+1} = x_k - \alpha\frac{F_{i_k}(x_k)}{\|\nabla F_{i_k}(x_k)\|^2_2} \nabla F_{i_k}(x_k).
\end{align}
The Nonlinear Greedy Randomized Kaczmarz method with momentum (NGRKm), integrating a Polyak heavy ball momentum \cite{poljak1964some} into the NGRK framework, was further developed in \cite{liu2025greedy}: 
\begin{equation}\label{eq:ngrkm}
	x_{k+1} = x_k - \alpha \frac{F_{i_k}(x_k)}{\|\nabla F_{i_k}(x_k)\|^2_2} \nabla F_{i_k}(x_k) + \beta (x_k - x_{k-1}),
\end{equation}
where $\alpha$ and $\beta$ are the step size and momentum parameter, respectively. 
Numerical experiments demonstrated the accelerance of NGRKm over NGRK \cite{liu2025greedy}.

Although the above methods have shown improvements over the classical nonlinear Kaczmarz method,
they suffer from two significant drawbacks:
(i) The selection of relaxation parameters $\alpha$ and momentum parameters $\beta$ lacks theoretical guidance.
(ii) These methods update the solution in each iteration by exploiting the knowledge from 
just a single equation. Compared with the block method to be introduced below, which updates 
the approximation by leveraging information from a set of equations at each iteration, this inherent limitation causes their convergence rate to deteriorate substantially as the problem scale increases. This will be 
demonstrated clearly through several typical examples numerically in Section \ref{sec:ne}.

With respect to the drawback (i), 
there are many interesting and in-depth studies \cite{alderman2024randomized, loizou2020momentum, morshed2022sampling, steinerberger2021randomized, zeng2024adaptive, derezinski2025randomized} 
for the linear Kaczmarz method for linear systems.
In particular, an adaptive stochastic heavy-ball momentum method was developed in \cite{zeng2024adaptive}, 
with a significant feature that the choice of the momentum parameter is independent 
of the singular values of the coefficient matrix. However, 
to the best of our knowledge, there is still no similar efficient algorithm available 
for the nonlinear Kaczmarz method.

Building on the success of the linear block Kaczmarz method \cite{gower2015randomized, briskman2015block, necoara2019faster, needell2014paved,  richtarik2020stochastic}, which has demonstrated superiority in handling large-scale linear systems, some studies have extended the concept to the nonlinear equations and developed a nonlinear block Kaczmarz method \cite{tan2024nonlinear, zhang2023maximum, zhang2024greedy, ye2024residual}.
This approach utilizes multiple equations simultaneously in each iteration, with the iterative formula defined as
\begin{align}\label{eq:mrbnk}
	x_{k+1}=x_k-(F_{\mathcal{J}_k}^{\prime}(x_k))^\dagger F_{\mathcal{J}_k}(x_k),
\end{align}
where $F_{\mathcal{J}_k}^{\prime}(x_k)^\dagger$ denotes the Moore-Penrose pseudo-inverse of the submatrix of $F^\prime(x_k)$ and the block index set $\mathcal{J}_k\subset[m]$. Notably, the maximum-residual block nonlinear Kaczmarz method (MRBNK) can also be regarded as an iterative sketch-and-project method applied to the sketched Newton system \cite{yuan2022sketched, zhang2023maximum}. However, these methods require the computing of 
the Moore-Penrose pseudoinverse of the sub-matrix of the Jacobian matrix in each iteration, which increases the computational cost of the methods. To avoid computing the pseudoinverse, a variation of the block Kaczmarz method was proposed 
by first projecting the current iterate onto each row of the selected block matrix, then computing the next iterate 
as a convex combination of these projections:
\begin{align}\label{eq:abnk}
	x_{k+1} = x_k - \alpha_k \left( \sum_{i \in \mathcal{J}_k} \omega_k^{(i)}\frac{F_i(x_k)}{\|\nabla F_i(x_k)\|^2} \nabla F_i(x_k)\right), k \geq 0,
\end{align}
where the weights $\omega_k^{(i)} \in [0,1]$ and satisfy $ \sum_{i\in \mathcal{J}_k}\omega_k^{(i)}=1$ and $\alpha_k\in (0,2)$. 
The method constructed based on iterative formula \eqref{eq:abnk} is named the average block nonlinear Kaczmarz method (ABNK) \cite{xiao2024averaging}. Notably, while the averaging technique in block iteration formula \eqref{eq:abnk} achieves lower computational cost compared to the pseudo-inverse based block scheme \eqref{eq:mrbnk}, its precision may be compromised. This observation motivates the necessity to enhance the convergence rate of averaging techniques without sacrificing their computational efficiency.

In this work, we propose a novel class of averaged block nonlinear Kaczmarz methods with momentum to design more efficient nonlinear Kaczmarz-type algorithms. From a geometric perspective, the selection of step size and momentum parameters is investigated, enabling adaptive update of these parameters using only information from the current iterate. The main contributions are:  

\begin{itemize}
	\item 
  We introduce a new framework that integrates averaging block techniques with momentum acceleration for solving nonlinear systems. This framework is highly flexible, with some important existing methods like ABNK \cite{xiao2024averaging} and NGRKm \cite{liu2025greedy} as the special cases. A key component is its adaptive scheme for selecting the step size and momentum parameters, which overcomes the limitation of manual tuning in prior work and enhances the convergence rate of the basic averaging block method from \cite{xiao2024averaging}. The proposed methods also exhibit superior scalability compared to existing nonlinear Kaczmarz approaches.
	\item
	The proposed method combines the benefits of momentum techniques with averaging block methods to achieve faster convergence while maintaining computational efficiency. Convergence guarantees are provided for the algorithm, and numerical experiments demonstrate practical effectiveness and superior performance compared to existing nonlinear Kaczmarz approaches. 
\end{itemize}

The paper is structured as follows: Section \ref{sec:ABNKM} presents the average block nonlinear Kaczmarz method with  momentum, including its general framework and adaptive-parameter variants.
Section \ref{sec:convergence} provides a rigorous convergence analysis for the proposed method with convergence rate. 
In Section \ref{sec:ne}, numerical experiments are conducted to verify the effectiveness of the proposed algorithms, with 
a comprehensive comparison among the proposed methods and existing nonlinear Kaczmarz methods. 
Section \ref{sec:con} presents some concluding remarks.

\section{The average block nonlinear Kaczmarz method with momentum}\label{sec:ABNKM}
\label{sec:main}

In this section, we propose a new average block nonlinear Kaczmarz method incorporating momentum, and 
an adaptive selection for both the step size and momentum parameters, leading to an adaptive momentum variant 
of the average block nonlinear Kaczmarz method.

\subsection{General framework}\label{sec:general}

To reduce the per-iteration computational cost in \eqref{eq:mrbnk}, we employ a block-averaged nonlinear Kaczmarz strategy. Furthermore, to accelerate convergence, we introduce a momentum term into the iterative scheme \eqref{eq:abnk}, leading to 
the new method of the general form:
\begin{align}\label{eq:abnkm0}
	x_{k+1} = x_k - \alpha_k \left( \sum_{i \in \mathcal{J}_k} \omega_k^{(i)}\frac{F_i(x_k)}{\|\nabla F_i(x_k)\|^2} \nabla F_i(x_k) \right)+\beta_k(x_k-x_{k-1}),  \quad k \geq 0,
\end{align}
where $\alpha_k$ is the step size, $\beta_k$ is the momentum parameter, $\omega_k^{(i)} \in [0,1]$ are weights satisfying $\sum_{i\in \mathcal{J}_k}\omega_k^{(i)} = 1$, and $\mathcal{J}_k$ is the index set selected at the $k$-th iteration. To prioritize the reduction of larger-magnitude components in the residual vector $r_{k} = -F(x_{k})$, we adopt a greedy criterion for selecting $\mathcal{J}_{k}$, following the approach in \cite{bai2018relaxed, ding2025adaptive, morshed2022sampling,zhang2023maximum, xiao2024averaging, tan2024nonlinear}:
\begin{align}\label{eq:greed}
	\mathcal{J}_{k}=\left\{ i_k \left| |F_{i_k}(x_k)|^2\geq \theta \max_{1\leq i\leq m} |F_{i}(x_k)|_2^2 \right.\right\},
\end{align}
where $\theta\in(0,1]$ controls the selectivity of the index set. 
This defines the general framework of the average block nonlinear Kaczmarz method with momentum (ABNKm).

\begin{remark}\label{Remark1}
	The cardinality of the index set $\mathcal{J}_k$ evolves dynamically with the iteration, and it is governed by the control parameter $\theta$. 
	It is crucial to design an algorithm that is robust with respect to parameter $\theta$.
\end{remark}

\begin{remark}\label{Remark2}
	The resulting ABNKm gives rise to some popular existing algorithms, with appropriate selections of the step size $\alpha_k$, 
	the momentum parameter $\beta_k$ and the weight $\omega_k$.  
	The simplest form of the algorithm may use constant step sizes and momentum parameters. 
	In particular, if we select $\beta_k=0$, the proposed ABNKm reduces to the ABNK scheme. 
	More appropriate choices for these key parameters, 
	especially those that can improve the convergence, will be discussed in the subsequent section.
\end{remark}

\subsection{The average block nonlinear Kaczmarz method with adaptive momentum}\label{sec:kernal}
In this section, we propose some effective adaptive strategies for selecting both the step size and momentum parameters, leveraging these mechanisms to accelerate the convergence of the newly proposed ABNKm method \eqref{eq:abnkm0}. 
The resulting algorithm, which explicitly incorporates these adaptive strategies, 
is designated as ABNKAm.

We now address the selection of the step size parameter $\alpha_k$ and momentum parameter $\beta_k$ in 
\eqref{eq:abnkm0}. Our objective is to determine  $\alpha_k$ and $\beta_k$ such that the error $x_{k+1}-x_*$ 
is minimized as follows: 
\begin{equation}
	\begin{aligned}\label{eq:optimal goal}
		\min_{\alpha,\,\beta\in\mathbb{R}} & \quad\|x-x_*\|_2^2 \\
		\text{subject to} & \quad  x = x_k - \alpha d_k +\beta(x_k-x_{k-1}),
	\end{aligned}
\end{equation}
where $d_k:=\sum_{i \in \mathcal{J}_k} \omega_k^{(i)}\frac{F_i(x_k)}{\|\nabla F_i(x_k)\|^2}\nabla F_i(x_k)$. 
If the condition
\begin{align}\label{eq:cond}
	\Delta_k:=\|d_k\|_2^2\|x_k-x_{k-1}\|_2^2-\left\langle d_k,x_k-x_{k-1}\right\rangle^2\neq0
\end{align}
holds, then the solution to the optimization problem \eqref{eq:optimal goal} is explicitly given by
\begin{equation}\label{eq:origin sol}
	\begin{split}
		\left\{
		\begin{aligned}
			\alpha_k&=\frac{\|x_{k}-x_{k-1}\|_{2}^{2} \langle d_{k},x_{k}-x_* \rangle - \langle d_{k},x_{k}-x_{k-1}\rangle \langle x_k-x_{k-1},x_{k}-x_*\rangle} {\Delta_k}, \\
			\beta_k&=\frac{\langle x_{k}-x_{k-1},d_{k}\rangle \langle x_{k}-x_*,d_k \rangle -||d_k||_{2}^{2} \langle x_k-x_{k-1},x_{k}-x_*\rangle}{\Delta_k}.
		\end{aligned}
		\right.
	\end{split}
\end{equation}
Unfortunately, the update formulae \eqref{eq:origin sol} require the knowledge of the true solution $x_*$ 
to equation \eqref{eq:eqs}, and this is clearly impractical. 
We next discuss how to handle the two terms that involves $x_*$, i.e., 
$\langle x_k-x_{k-1}, x_k-x_* \rangle$ and $\langle d_k,x_k-x_{*} \rangle$ for all $k\geq 1$.

On the one hand, from a geometric perspective of the minimization \eqref{eq:optimal goal} we can view 
$x_{k}$ for $k\geq 2$ as the orthogonal projection of $x_*$ onto the affine set
\begin{align}\label{eq:sub space}
	\Pi_{k-1}:=x_{k-1}+\text{Span}\{d_{k-1},x_{k-1}-x_{k-2}\}\,,
\end{align}
hence we have $\langle x_k-x_{k-1}, x_k-x_* \rangle=0$ for  $k\geq 2$.
While for $k=1$, we obtain $x_1$ from the optimization:
\begin{equation}\label{eq:simple approx sol}
	\begin{aligned}
		\min_{\alpha\in\mathbb{R}} & \quad\|x-x_*\|_2^2 \\
		\text{subject to} & \quad  x = x_0 - \alpha d_0.
	\end{aligned}
\end{equation}
This implies that the orthogonality condition $\langle x_k-x_{k-1}, x_k-x_*\rangle=0$ holds for all $k\geq 1$.

On the other hand, in order to deal with $\langle d_k,x_k-x_{*} \rangle$ in \eqref{eq:origin sol}, 
we may linearize the nonlinear operator $F$ at the current iterate $x_k$ via Taylor expansion, 
$
F(x) \approx F(x_k) + F^{\prime}(x_k)(x - x_k).
$
Then we have the approximation
\begin{align}
	\langle d_k,x_k-x_{*} \rangle & =\left\langle \sum_{i \in \mathcal{J}_{k}} \omega_k^{(i)}\frac{F_i(x_k)}{\|\nabla F_i(x_k)\|^2}\nabla F_i(x_k),x_{k}-x_{*} \right\rangle \notag\\
	& =\sum_{i \in \mathcal{J}_{k}} \omega_k^{(i)}\frac{F_i(x_k)}{\|\nabla F_i(x_k)\|^2} \langle \nabla F_i(x_k),x_{k}-x_{*}\rangle \notag\\
	& \approx\sum_{i\in \mathcal{J}_{k}}\omega_{k}^{(i)}\frac{F_{i}(x_{k})}{||\nabla F_{i}(x_{k})||_{2}^{2}}F_{i}(x_{k}) \notag\\
	& =\sum_{i\in \mathcal{J}_{k}}\omega_{k}^{(i)}\frac{F_{i}(x_{k})^{2}}{||\nabla F_i(x_k)||_{2}^{2}}
	\,.\label{eq:Reduced inner product}
\end{align}
Using the above deductions, we can rewrite \eqref{eq:origin sol} approximately as 
\begin{equation}\label{eq: actual sol}
	\begin{split}
		\left\{
		\begin{aligned}
			\alpha_k=\frac{\|x_{k}-x_{k-1}\|_{2}^{2} \cdot \sum_{i\in \mathcal{J}_{k}}\omega_{k}^{(i)}\frac{F_{i}(x_{k})^{2}}{\|\nabla F_{i}(x_{k})\|_{2}^{2}} } {\Delta_k}, \\
			\beta_k=\frac{\langle x_{k}-x_{k-1},d_{k}\rangle \cdot \sum_{i\in \mathcal{J}_{k}}\omega_{k}^{(i)}\frac{F_{i}(x_{k})^{2}}{\|\nabla F_{i}(x_{k})\|_{2}^{2}}}{\Delta_k}. 
		\end{aligned}
		\right.
	\end{split}
\end{equation}

Under the condition that $\Delta_k \neq 0$ as defined in \eqref{eq:cond}, or equivalently, $\text{dim}(\Pi_k) = 2$ for all $k \geq 1$,  the ABNKAm method is well-defined and can be formulated as Algorithm \ref{alg:ABNKAm}.

\begin{algorithm}
	\caption{The average block nonlinear Kaczmarz method with adaptive momentum}
	\label{alg:ABNKAm}
	\begin{algorithmic}[1]
		\STATE \textbf{Input:} The initial guess $x_0$, $F$, parameter $\theta\in(0,1]$, $\tau_a, \tau_r$
		\STATE Choose $k=0$, $r_0=-F(x_0)$ and weights sequence $(\omega_k)_{k\geq0}$
		\WHILE {$\|r_k\|_2\geq \tau_a + \tau_r \|r_0\|_2$}
		\STATE Determine the index set $\mathcal{J}_k=\left\{ i_k||r_k^{(i_k)}|^2\geq\theta\max_{1\leq i\leq m}|r_k^{(i)}|^2 \right\}$
		\STATE Compute the parameters $\alpha_{k}$ and $\beta_{k}$ in \eqref{eq: actual sol}.
		\STATE Update  $x_{k} \leftarrow x_k - \alpha_k \left( \sum_{i \in \mathcal{J}_k} \omega_k^{(i)}\frac{F_i(x_k)}{\|\nabla F_i(x_k)\|^2} \nabla F_i(x_k) \right)+\beta_k(x_k-x_{k-1})$
		\STATE Compute residual $r_k=-F(x_k)$
		\ENDWHILE
		\STATE \textbf{Output:} $x_{k}$
	\end{algorithmic}
\end{algorithm}

In general, the condition $\Delta_k \neq 0$ may not be guaranteed for all $k \geq 1$. 
Even if this is true, it may occur that $\Delta_k$ is relatively small in magnitude for some $k$. 
To enhance the robustness of the algorithm and improve numerical stability, we introduce a modified update rule 
of Algorithm \ref{alg:ABNKAm}.

Specifically, suppose that at the $\ell$-th iteration, either $|\Delta_\ell| < \varepsilon$ for a given tolerance $\varepsilon$, or the condition
\begin{align}\label{beta tru}
	\beta_k \notin (0, \beta_{\max})
\end{align}
holds, where $\beta_{\max}$ is an appropriately selected positive constant. In such cases, the update rule \eqref{eq:optimal goal} is replaced by the solution to the optimization problem:
\begin{equation}\label{eq:cond sol}
	\begin{aligned}
		\min_{\alpha \in \mathbb{R}}  \quad \|x - x_\ell^*\|_2^2 
		\text{   ~~subject to }  \quad x = x_\ell - \alpha d_\ell,
	\end{aligned}
\end{equation}
where $x_\ell^*$ denotes the solution of the linearized model at $x_\ell$, i.e., $F(x_\ell) + F'(x_\ell)(x_\ell^* - x_\ell) = 0$. 
The minimization \eqref{eq:cond sol} shares the same structure as \eqref{eq:simple approx sol}, and 
its minimizer is given by
\begin{equation}\label{eq cond sol sol}
	\alpha_\ell = \frac{\langle d_\ell, x_\ell - x_\ell^* \rangle}{|d_\ell|^2}
	= \frac{\sum_{i \in \mathcal{J}_{\ell}} \omega_{\ell}^{(i)} \frac{F_i(x_\ell)^2}{\|\nabla F_i(x_\ell)\|_2^2}}{|d_\ell|^2}.
\end{equation}

Based on the preceding analysis, we introduce a practical implementation of the average block nonlinear Kaczmarz method with adaptive momentum. For the sake of convenience, we still refer to this implementation as ABNKAm, 
which is summarized in Algorithm \ref{alg:ABNKAm2}.
\begin{algorithm}
	\caption{A practical variant of the average block nonlinear Kaczmarz method with adaptive momentum}
	\label{alg:ABNKAm2}
	\begin{algorithmic}[1]
		\STATE \textbf{Input:} The initial guess $x_0$, $F$, parameters $\theta\in(0,1]$, $\varepsilon$, $\tau_a, \tau_r$
		\STATE Choose $k=0$, $r_0=-F(x_0)$ and weights sequence $(\omega_k)_{k\geq0}$
		\WHILE {$\|r_k\|_2\geq \tau_a + \tau_r \|r_0\|_2$}
		\STATE Determine the index set $\mathcal{J}_k=\left\{ i_k||r_k^{(i_k)}|^2\geq\theta\max_{1\leq i\leq m}|r_k^{(i_k)}|^2 \right\}$
		\STATE Compute the condition $\Delta_k$ in \eqref{eq:cond}
		\STATE  Compute the parameters $\beta_{k}$ in \eqref{eq: actual sol}.
		\IF{$| \Delta_k |\geq\varepsilon$ or $\beta_k \in (0, \beta_{\max})$}
		\STATE Compute the parameters $\alpha_{k}$ in \eqref{eq: actual sol}.
		\ELSE
		\STATE Compute the parameters $\alpha_{k}$ in \eqref{eq cond sol sol}, and let $\beta_{k}=0$ 
		\ENDIF
		\STATE Update  $x_{k} \leftarrow x_k - \alpha_k \left( \sum_{i \in \mathcal{J}_k} \omega_k^{(i)}\frac{F_i(x_k)}{\|\nabla F_i(x_k)\|^2} \nabla F_i(x_k) \right)+\beta_k(x_k-x_{k-1})$
		\STATE Compute residual $r_k=-F(x_k)$
		\ENDWHILE
		\STATE \textbf{Output:} $x_{k}$
	\end{algorithmic}
\end{algorithm}

For the proposed Algorithms \ref{alg:ABNKAm}-\ref{alg:ABNKAm2}, a natural choice for the weight sequence \cite{xiao2024averaging, necoara2019faster} is given by
\begin{align}\label{eq:weights}
	\omega_k^{(i)}=\frac{\|\nabla F_i(x_k)\|^2}{\sum_{i\in \mathcal{J}_k}\|\nabla F_i(x_k)\|^2} \quad\text{for all } k\geq 0\,.
\end{align}
Building upon the preceding analysis, the iterative formula \eqref{eq:abnkm0} admits the following three cases:

	(1) The sequences for both the step sizes and the momentum parameters 
	are kept to be constant (i.e., $\alpha_k = \alpha$, $\beta_k = \beta$ for all $k$), then the iterative formula is simplified to the form
	\begin{align}\label{eq:abnkmc}
		x_{k+1}=x_k-\alpha \frac{(F^{\prime}_{\mathcal{J}_k}(x_k))^TF_{\mathcal{J}_k}(x_k)}{\|F^{\prime}_{\mathcal{J}_k}(x_k)\|^2}+\beta(x_k-x_{k-1}).
	\end{align}
	
	(2) The sequences for both the step sizes and the momentum parameters are dynamically adjusted 
	via formula \eqref{eq: actual sol}, then the iterative formula reads as 
	\begin{align}\label{eq:abnkam1}
		x_{k+1}=x_k-\alpha_k \frac{(F^{\prime}_{\mathcal{J}_k}(x_k))^TF_{\mathcal{J}_k}(x_k)}{\|F^{\prime}_{\mathcal{J}_k}(x_k)\|^2}+\beta_k(x_k-x_{k-1}).
	\end{align}
	with
	\begin{align*}
		\alpha_k&=\frac{\| x_k-x_{k-1}\|
			_2^2 \|F_{\mathcal{J}_k}(x_k)\|_2^2 \|F^{\prime}_{\mathcal{J}_k}(x_k)\|^2}{\Delta_k},\notag \\
		\beta_k&=\frac{\langle x_k-x_{k-1}, (F^{\prime}_{\mathcal{J}_k}(x_k))^TF_{\mathcal{J}_k}(x_k)\rangle \|F_{\mathcal{J}_k}(x_k)\|^2_2}{\Delta_k}.\notag \\
		\Delta_k&=\|(F^{\prime}_{\mathcal{J}_k}(x_k))^TF_{\mathcal{J}_k}(x_k)\|^2_2 \cdot \|x_k-x_{k-1}\|^2_2- \langle(F^{\prime}_{\mathcal{J}_k}(x_k))^TF_{\mathcal{J}_k}(x_k),x_k-x_{k-1}\rangle^2, \notag
	\end{align*}

	(3) A more practical implementation of ABNKAm adopts the following hybrid iteration scheme:
\begin{equation}\label{eq:abnkam2}
x_{k+1} = 
\begin{cases}
x_k - \alpha_k \dfrac{(F'_{\mathcal{J}_k}(x_k))^{\mathsf{T}} F_{\mathcal{J}_k}(x_k)}{\|F'_{\mathcal{J}_k}(x_k)\|^2} + \beta_k (x_k - x_{k-1}), \\[2ex]
\quad \text{if } |\Delta_k| \geq \varepsilon \ \text{and} \ \beta_k \in (0, \beta_{\max}); \\[2ex]
x_k - \dfrac{\|F_{\mathcal{J}_k}(x_k)\|_2^2}{\bigl\|(F'_{\mathcal{J}_k}(x_k))^{\mathsf{T}} F_{\mathcal{J}_k}(x_k)\bigr\|_2^2} (F'_{\mathcal{J}_k}(x_k))^{\mathsf{T}} F_{\mathcal{J}_k}(x_k), 
\quad \text{otherwise}.
\end{cases}
\end{equation}
	Here $\Delta_k$, $\alpha_k$ and $\beta_k$ are the same as in \eqref{eq:abnkam1}. This hybrid approach effectively balances numerical stability with computational efficiency, and ABNKAm takes this form in all subsequent numerical experiments. 

We conclude this subsection with some important remarks about ABNKAm.
\begin{remark}
	The ABNKAm method based on the iterative scheme \eqref{eq:abnkam2} requires at most $4$ vector inner products 
	at each iteration, while $(F^{\prime}_{\mathcal{J}_k}(x_k))^TF_{\mathcal{J}_k}(x_k)$ can be implemented 
	through matrix-vector products without forming the sub-Jacobian matrix.
\end{remark}

\begin{remark}
	Our proposed framework presents a general computational paradigm that adapts updates rules 
	based on local geometric conditions. It is clear 
	that the popular average block nonlinear Kaczmarz method with extrapolated step sizes (ABNK2) \cite{xiao2024averaging} is 
	as a special case, where momentum is not considered.
\end{remark}

\section{Convergence analysis}\label{sec:convergence}

In this section, we establish the convergence properties of the average block nonlinear Kaczmarz method with adaptive momentum 
in subsection\,\ref{sec:kernal}. We begin by introducing some necessary assumptions and preliminary results 
that are used in our subsequent analysis.

\begin{hypothesis}\label{as:asc}
	Let $D \subseteq \mathbb{R}^n$ be a bounded closed domain, $x_0 \in D$ and $F : D \to \mathbb{R}^m$ a nonlinear vector-valued function. The following conditions are assumed:
	
	\begin{itemize}
		\item[(i)] Each component $F_i : D \to \mathbb{R}$ of $F = (F_1, \dots, F_m)^{\mathsf{T}}$ 
		is differentiable.
		
		\item[(ii)] There exists $\delta_0 > 0$ such that
		$B_{\delta_0}(x_0) \subseteq D$.

		\item[(iii)] For all $i \in \{1, \dots, m\}$ and $x, \widetilde{x} \in B_{\delta_0}(x_0)$, there exists $\eta_i \in [0, \eta)$ with 
		$\eta = \max\limits_{1 \leq i \leq m} \eta_i < {1}/{2}$ such that
		\begin{equation}\label{eq:local cond}
			\left| F_i(x) - F_i(\widetilde{x}) - \nabla F_i(x)^{\mathsf{T}} (x - \widetilde{x}) \right| \leq \eta_i \left| F_i(x) - F_i(\widetilde{x}) \right|.
		\end{equation}

		\item[(iv)] For the constant $\delta_0$ from (ii), it holds that
		\begin{equation} \label{eq:sol and ini}
			B_{\delta_0/4}(x_0) \cap \ker(F) \neq \emptyset\,.
		\end{equation}

		\item[(v)] For the same $\delta_0$, there exists $\widetilde{x} \in B_{\delta_0}(x_0)$ such that
		\begin{equation}
			F'(\widetilde{x})^{\mathsf{T}} e_i \neq 0 \quad \text{for all } i \in {1, \dots, m},
		\end{equation}
		where $e_1, \dots, e_m \in \mathbb{R}^m$ form the standard orthonormal basis of $\mathbb{R}^m$.
	\end{itemize}
\end{hypothesis}

Assumptions \ref{as:asc} are standard in the literature \cite{hanke1995convergence, haltmeier2007kaczmarz, kaltenbacher2008iterative, tan2024nonlinear}.
Condition $(iii)$ is the so-called local tangential cone condition, a natural requirement for Kaczmarz-type iterative methods; detailed explanations and heuristic justifications can be found in \cite[page 12]{kaltenbacher2008iterative}. Moreover, the condition 
$\eta < {1}/{2}$ plays a pivotal role in ensuring convergence of the nonlinear Kaczmarz method, as will become evident in Lemmas \ref{le:case2}. 
Conditions $(ii)$ and $(iv)$ together imply the existence of a unique $x_0$-minimum-norm solution, as will be shown in Lemma \ref{le:min sol}.
Finally, Assumption $(v)$ is necessary for the validity of several of our subsequent key results, 
as established in Lemma \ref{le:as odd}. In fact, condition $(v)$ is less restrictive 
than the usual condition that requires the Jacobian matrix of $F$ to be row-bounded below or to satisfy full column rank conditions.

\begin{lemma}\label{le:cy} \cite{hanke1995convergence, haltmeier2007kaczmarz, kaltenbacher2008iterative, tan2024nonlinear, xiao2024averaging}
	Let $\mathcal{J} \subseteq \{1, 2, \ldots, m\}$. Under Assumption \ref{as:asc}(ii)-(iii), it holds for all $x,\widetilde{x}\in B_{\delta_0}(x_0)$ that 
	\begin{align*}
		&\frac{1}{1+\eta} \|F_{\mathcal{J}}'(x)(x - \widetilde{x})\|_2 \leq \|F_{\mathcal{J}}(x) - F_{\mathcal{J}}(\widetilde{x})\|_2 \leq \frac{1}{1-\eta} \|F_{\mathcal{J}}'(x)(x - \widetilde{x})\|_2, \\
		&\left\|F_{\mathcal{J}}(x) - F_{\mathcal{J}}(\widetilde{x}) - F'_{\mathcal{J}}(x)(x - \widetilde{x})\right\|_{2}^{2} \leq \eta^{2} \left\|F_{\mathcal{J}}(x) - F_{\mathcal{J}}(\widetilde{x})\right\|_{2}^{2}, \\
		&\left\|F_{\mathcal{J}}(x) - F_{\mathcal{J}}(\widetilde{x})\right\|_{2}^{2} \geq \frac{1}{1 + \eta^{2}} \left\|F'_{\mathcal{J}}(x)(x - \widetilde{x})\right\|_{2}^{2}.
	\end{align*}
\end{lemma}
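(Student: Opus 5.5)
The plan is to work componentwise from the local tangential cone condition \eqref{eq:local cond} and then sum over $i\in\mathcal{J}$. Fix $x,\widetilde{x}\in B_{\delta_0}(x_0)$ and, for each $i\in\mathcal{J}$, write
\[
a_i:=F_i(x)-F_i(\widetilde{x}),\qquad b_i:=\nabla F_i(x)^{\mathsf{T}}(x-\widetilde{x}),
\]
so that $F_{\mathcal{J}}(x)-F_{\mathcal{J}}(\widetilde{x})=(a_i)_{i\in\mathcal{J}}$ and $F'_{\mathcal{J}}(x)(x-\widetilde{x})=(b_i)_{i\in\mathcal{J}}$. Assumption \ref{as:asc}(iii) gives $|a_i-b_i|\le\eta_i|a_i|\le\eta|a_i|$ for every $i$.

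The second inequality follows at once: square the componentwise bound $|a_i-b_i|\le\eta|a_i|$ and sum over $i\in\mathcal{J}$ to get $\|a-b\|_2^2\le\eta^2\|a\|_2^2$. For the first pair of inequalities, apply the triangle inequality in vector form to $\|a-b\|_2\le\eta\|a\|_2$. This gives $\|b\|_2\le\|a\|_2+\|a-b\|_2\le(1+\eta)\|a\|_2$, which is the left bound. It also gives $\|a\|_2\le\|b\|_2+\|a-b\|_2\le\|b\|_2+\eta\|a\|_2$, so $(1-\eta)\|a\|_2\le\|b\|_2$. Since $\eta<1/2<1$, we may divide by $1-\eta$ to obtain the right bound.

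The third inequality is the only delicate step, because its constant $1/(1+\eta^2)$ is sharper than the $1/(1+\eta)^2$ obtained by squaring the first bound. One cannot simply expand $\|b\|_2^2=\|a\|_2^2-2\langle a,a-b\rangle+\|a-b\|_2^2$ and bound the cross term, since without sign information on $\langle a,a-b\rangle$ this yields only $(1+\eta)^2$. I would therefore first check whether the bound holds componentwise. For each $i$, the condition $|a_i-b_i|\le\eta|a_i|$ with $\eta<1$ forces $b_i$ to have the same sign as $a_i$ (or $a_i=b_i=0$), and $|b_i|\le(1+\eta)|a_i|$. So for $a_i>0$, say, $b_i$ ranges over $[(1-\eta)a_i,(1+\eta)a_i]$. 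The required inequality $b_i^2\le(1+\eta^2)a_i^2$ fails at the endpoint $b_i=(1+\eta)a_i$, and since the constraint set is a product over $i$, summation does not help. Hence this inequality cannot be deduced from (iii) alone. I would try two routes in order. First, reread the cited sources to see whether the intended constant is $(1+\eta)^2$, or whether a weaker statement such as $\|a\|_2^2\ge\frac{1}{1+\eta^2}\|b\|_2^2-\text{(correction)}$ or a reversed inequality was meant. Second, in the proof I would state and use the correct consequence $\|F_{\mathcal{J}}(x)-F_{\mathcal{J}}(\widetilde{x})\|_2^2\ge(1+\eta)^{-2}\|F'_{\mathcal{J}}(x)(x-\widetilde{x})\|_2^2$, and note that this is the only step at which the argument departs from the stated constant.
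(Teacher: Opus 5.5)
Your proofs of the first two inequalities are correct. They are the standard componentwise argument: square and sum for the second, and the vector triangle inequality applied to $\|a-b\|_2\le\eta\|a\|_2$ for the first. The paper gives no proof of this lemma, only citations, so there is no alternative route to compare against.

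Your diagnosis of the third inequality is also right: it is false as stated, even under the full Assumption~\ref{as:asc}. What your argument lacks is a genuine counterexample. Showing that a single pair may have $b_i=(1+\eta)a_i$ only shows that the pointwise inequality $|a_i-b_i|\le\eta|a_i|$ does not imply the claim. Condition (iii) constrains $F$ for all pairs in the ball simultaneously, so to conclude that the claim cannot be deduced you need an actual $F$ that nearly attains the extreme ratio.

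One is easy to give. Take $n=m=1$, $x_0=0$, $\delta_0=1$, $D=[-1,1]$, and $F(t)=t+\tfrac{\eta_1}{2}t^2$ with $0<\eta_1<\eta<\tfrac12$. For $x\neq\widetilde{x}$ in $(-1,1)$,
\[
\frac{F'(x)(x-\widetilde{x})}{F(x)-F(\widetilde{x})}=\frac{1+\eta_1x}{1+\tfrac{\eta_1}{2}(x+\widetilde{x})}\in[1-\eta_1,\,1+\eta_1].
\]
This follows by checking monotonicity in each variable; the extremes are approached as $x\to\pm1$, $\widetilde{x}\to\mp1$. So (iii) holds, and (i), (ii), (iv), (v) hold trivially. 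Letting $x\to1$ and $\widetilde{x}\to-1$, the ratio $|F'(x)(x-\widetilde{x})|^2/|F(x)-F(\widetilde{x})|^2$ tends to $(1+\eta_1)^2$. This exceeds $1+\eta^2$ as soon as $2\eta_1+\eta_1^2>\eta^2$, for instance whenever $\eta_1\ge\eta/2$. Concretely, $\eta_1=2/5$ and $x=-\widetilde{x}=9/10$ give the ratio $(34/25)^2\approx1.85>5/4>1+\eta^2$ for every $\eta<1/2$. The same example shows that $(1+\eta)^{-2}$ is the sharp constant.

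Of your two proposed routes, drop the first. Rereading the sources cannot rescue $1/(1+\eta^2)$, because the example satisfies the hypotheses exactly as stated, and no correction term is needed. The correct statement is simply
$\|F_{\mathcal{J}}(x)-F_{\mathcal{J}}(\widetilde{x})\|_2^2\ge(1+\eta)^{-2}\|F'_{\mathcal{J}}(x)(x-\widetilde{x})\|_2^2$,
which is the square of your left-hand bound.

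This matters for the rest of the paper. The $1/(1+\eta^2)$ constant is used in the proof of Theorem~\ref{th:ideal}, at the step that passes from $\|F_{\mathcal{J}_k}(x_k)-F_{\mathcal{J}_k}(x_*)\|_2^2$ to $\|F'_{\mathcal{J}_k}(x_k)(x_k-x_*)\|_2^2$. It also appears in the rate of Lemma~\ref{le:case2}. In both places $1+\eta^2$ should become $(1+\eta)^2$, and $c$, $d$, $b_1$, $b_3$ and the thresholds $\varepsilon_0$, $C_1$--$C_4$ built from them must be recomputed.
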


\begin{lemma}\label{le:min sol}\cite{kaltenbacher2008iterative, tan2024nonlinear}
	Let $x_0\in D$ and the vector-valued function $F$ satisfies Assumption \ref{as:asc}(ii)-(iv). Then
	
	(i) For any \( x, \widetilde{x} \in B_{\delta_0}(x_0) \) with \( F(x) = F(\widetilde{x}) \), it holds that \( x - \widetilde{x} \in \ker(F'(x)) = \ker(F'(\widetilde{x})) \).
	
	(ii) There exists a unique \( x_0 \)-minimum-norm solution $x_*\in B_{\delta_0}(x_0)$ to \eqref{eq:eqs}, satisfying 
	\begin{equation}\label{eq:sol x0}
		x_* - x_0 \in \ker(F'(x_*))^\perp.
	\end{equation}
\end{lemma}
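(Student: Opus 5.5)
The lemma has two parts, and I will handle them in order. Part (i) comes from the tangential cone inequality \eqref{eq:local cond}. Part (ii) then combines (i) with the nonemptiness of the solution set given by Assumption \ref{as:asc}(iv) and a projection argument.

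For (i), take $x,\widetilde{x}\in B_{\delta_0}(x_0)$ with $F(x)=F(\widetilde{x})$. Since $F_i(x)-F_i(\widetilde{x})=0$, the right-hand side of \eqref{eq:local cond} vanishes for every $i$. Hence $\nabla F_i(x)^{\mathsf T}(x-\widetilde{x})=0$ for all $i$, that is, $x-\widetilde{x}\in\ker F'(x)$. Swapping the roles of $x$ and $\widetilde{x}$ gives $x-\widetilde{x}\in\ker F'(\widetilde{x})$ as well. It remains to show the two kernels coincide. Take $h\in\ker F'(x)$ and $t>0$ small enough that $x+th\in B_{\delta_0}(x_0)$. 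Applying \eqref{eq:local cond} with base point $x$ and comparison point $x+th$ gives
\[
|F_i(x)-F_i(x+th)| \le \eta_i\,|F_i(x)-F_i(x+th)|,
\]
because the linear term $\nabla F_i(x)^{\mathsf T}(th)$ vanishes. Since $\eta_i<1$, this forces $F(x+th)=F(x)=F(\widetilde{x})$. The first step then shows $th\in\ker F'(\widetilde{x})$ (after a triangle step $x+th-\widetilde{x}$ and $x-\widetilde{x}$ both lie in that kernel), so $h\in\ker F'(\widetilde{x})$. By symmetry the two kernels are equal.

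For (ii), write $S:=\ker(F)\cap \overline{B_{\delta_0}(x_0)}$. The set $S$ is nonempty by (iv) and closed by continuity of $F$, which holds since $F$ is differentiable. Fix $\bar x\in S\cap B_{\delta_0/4}(x_0)$ and set $N:=\ker F'(\bar x)$. I would show that $S$ coincides locally with the affine set $\bar x+N$:
\begin{itemize}
\item The argument in (i) shows that for $h\in N$ with $\bar x+h\in B_{\delta_0}(x_0)$, we have $F(\bar x+h)=0$.
\item Conversely, every other solution in the ball differs from $\bar x$ by an element of $N$, again by (i).
\end{itemize}
Hence $S\cap B_{\delta_0}(x_0)=(\bar x+N)\cap B_{\delta_0}(x_0)$, which is convex.

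Let $x_*$ be the orthogonal projection of $x_0$ onto the affine space $\bar x+N$. Then
\[
\|x_*-x_0\|\le\|\bar x-x_0\|<\delta_0/4,
\]
so $x_*$ lies in the ball and is therefore a solution. It minimizes $\|x-x_0\|$ over all solutions in $B_{\delta_0}(x_0)$, and it is unique by strict convexity of the norm on an affine set. The projection characterization gives $x_*-x_0\perp N$, and $N=\ker F'(x_*)$ by (i), which is exactly \eqref{eq:sol x0}.

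The main obstacle is the local identification of the solution set with $\bar x+N$, in particular the kernel equality in (i). Its only delicate point is keeping the auxiliary points $x+th$ inside $B_{\delta_0}(x_0)$. The margin from (iv), which places $\bar x$ and hence $x_*$ within $\delta_0/4$ of $x_0$, is what keeps every point used in the argument inside the ball where \eqref{eq:local cond} applies.
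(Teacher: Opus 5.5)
The paper gives no proof of its own and defers to \cite{kaltenbacher2008iterative}. Your argument is correct and is essentially the standard one from that source: the cone condition with $\eta_i<1$ makes each level set of $F$ in the ball an affine slice $(x+\ker F'(x))\cap B_{\delta_0}(x_0)$, and $x_*$ is the orthogonal projection of $x_0$ onto the slice through a known solution. Projecting onto the whole affine space $\bar x+N$, rather than minimizing over its intersection with the ball, is a harmless simplification that gives existence, uniqueness and \eqref{eq:sol x0} at once.

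One caveat concerns your kernel-equality step in (i), which needs $x+th\in B_{\delta_0}(x_0)$ for some $t\neq 0$. This fails at a boundary point $x$ with a tangent direction $h$ if the ball is closed. That reading is plausible: it is the convention of the cited source, and the paper later infers $x_{k+1}\in B_{\delta_0/2}(x_*)$ from $\|x_{k+1}-x_*\|_2\le\delta_0/2$. In that case, perturb instead along $h+s(\widetilde x-x)$, which lies in $\ker F'(x)$ by your first step. For $s$ large this direction points strictly into the ball when $\widetilde x\neq x$, since $\langle \widetilde x-x,\,x-x_0\rangle\le-\tfrac12\|\widetilde x-x\|_2^2$ at a boundary point. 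Afterwards subtract $s(\widetilde x-x)\in\ker F'(\widetilde x)$ to recover $h\in\ker F'(\widetilde x)$. Part (ii) is unaffected, because $\bar x$ and $x_*$ lie within $\delta_0/4$ of $x_0$ and are therefore interior points.
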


The second part of Lemma \ref{le:min sol} establishes that $x_*$ is the unique $x_0$-minimum-norm element in $\ker(F) \cap B_{\delta_0}(x_0)$ and satisfies condition \eqref{eq:sol x0}. Indeed, by Lemma \ref{le:min sol}(i) and \eqref{eq:sol x0}, if there exists another point $x^\dag \in \ker(F) \cap B_{\delta_0}(x_0)$, then
\begin{align}\nonumber
	\|x^{\dag} - x_0\|^2_2 &= \|x^{\dag} - x_*\|^2_2 + \|x_* - x_0\|^2_2 + 2\langle x^{\dag} - x_*, x_* - x_0 \rangle \\
	&= \|x^{\dag} - x_*\|^2_2 + \|x_* - x_0\|^2_2 > \|x_* - x_0\|^2.
\end{align}

Furthermore, in the linear case $F(x) = Ax - b$, condition \eqref{eq:sol x0} implies that $x_*$ corresponds to the best approximation of $x_0$ onto the solution manifold $\{x \mid Ax = b\}$.

\begin{lemma}\label{le:as odd}\cite{tan2024nonlinear}
	Let $x_0 \in D$ and suppose $F$ satisfies Assumptions \ref{as:asc}(ii)--(iii). Then, for every $i \in {1, 2, \dots, m}$, $F_i(x) = 0$ for all $x \in B_{\delta_0}(x_0)$ if and only if there exists $\widetilde{x}_i \in B_{\delta_0}(x_0)$ such that $F'(\widetilde{x}_i)^\top e_i = 0$.
\end{lemma}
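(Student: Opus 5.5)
We prove both directions of Lemma \ref{le:as odd} using the local tangential cone condition \eqref{eq:local cond} applied to the single component $F_i$. Throughout, fix $i$. We use $\nabla F_i(x) = F'(x)^{\mathsf T} e_i$.

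\emph{Necessity.} If $F_i \equiv 0$ on the open ball $B_{\delta_0}(x_0)$, then $F_i$ is differentiable there with vanishing gradient. Hence $F'(x)^{\mathsf T} e_i = \nabla F_i(x) = 0$ at every point of the ball. Taking, e.g., $\widetilde{x}_i = x_0$ gives the required point. Differentiability comes from Assumption \ref{as:asc}(i), implicit in the setting, and the gradient of a function that is constant on an open set is zero.

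\emph{Sufficiency.} Suppose $\nabla F_i(\widetilde{x}_i) = 0$ for some $\widetilde{x}_i \in B_{\delta_0}(x_0)$. We apply \eqref{eq:local cond} with the base point $x = \widetilde{x}_i$ and an arbitrary second point $y \in B_{\delta_0}(x_0)$. This gives
\[
\left| F_i(\widetilde{x}_i) - F_i(y) - \nabla F_i(\widetilde{x}_i)^{\mathsf T}(\widetilde{x}_i - y)\right| \le \eta_i \left| F_i(\widetilde{x}_i) - F_i(y)\right|.
\]
Since the gradient term vanishes, this reads $|F_i(\widetilde{x}_i) - F_i(y)| \le \eta_i |F_i(\widetilde{x}_i) - F_i(y)|$. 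Because $\eta_i < 1/2 < 1$, it forces $F_i(y) = F_i(\widetilde{x}_i)$ for all $y$ in the ball. So $F_i$ is constant, equal to $c := F_i(\widetilde{x}_i)$, on $B_{\delta_0}(x_0)$.

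It remains to show $c = 0$, and this is the main obstacle. Assumptions (ii)--(iii) alone do not obviously yield a zero of $F_i$. I would invoke Assumption \ref{as:asc}(iv): it gives a point $\hat x \in B_{\delta_0/4}(x_0) \cap \ker(F) \subseteq B_{\delta_0}(x_0)$. Then $c = F_i(\hat x) = 0$, so $F_i \equiv 0$ on the ball.

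Without (iv), the statement as written (with only (ii)--(iii)) would fail for a nonzero constant component. So I would read the hypothesis as including (iv), as the paper's use of a solution $x_*$ in the ball suggests, and note this explicitly in the proof. With that, both directions are complete.
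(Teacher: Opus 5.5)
Your proof is correct. The paper gives no proof of this lemma; it only cites \cite{tan2024nonlinear}. Your argument is the natural one: take the point $\widetilde{x}_i$ where the gradient vanishes as the base point in \eqref{eq:local cond}. This gives $(1-\eta_i)\,|F_i(\widetilde{x}_i)-F_i(y)|\le 0$, so $F_i$ is constant on $B_{\delta_0}(x_0)$, and a zero supplied by Assumption \ref{as:asc}(iv) forces that constant to be $0$.

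You are also right that the statement as written, under (ii)--(iii) alone, is false. A nonzero constant $F_i$ satisfies \eqref{eq:local cond} trivially and has zero gradient everywhere. So (iv) must be added, or merely the existence of one zero of $F_i$ in the ball. This is harmless for the paper, since the lemma is only invoked under the full Assumption \ref{as:asc}.

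For the necessity direction you do not even need (i). With $F_i\equiv 0$, \eqref{eq:local cond} at $x=x_0$ gives $\nabla F_i(x_0)^{\mathsf T}(x_0-y)=0$ for all $y$ in the ball, hence $\nabla F_i(x_0)=0$.
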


Lemma \ref{le:as odd} demonstrates the necessity of condition (v) in Assumption \ref{as:asc}. If condition (v) is not satisfied, then by Lemma \ref{le:as odd}, there would exist some $i_0 \in \{1, 2, \dots, m\}$ such that $F_{i_0}(x) = 0$ for all $x \in B_{\delta_0}(x_0)$.

\begin{lemma}\label{le:mvs} \cite{horn2012matrix}
	Let $A \in \mathbb{R}^{m \times n}$ be any nonzero real matrix. For every vector $u \in \text{range}(A)$, we have 
	$\sigma_{\min}^2(A) \|u\|_2^2 \leq \|A^T u\|_2^2 \leq \sigma_{\max}^2(A) \|u\|_2^2$,
	where $\text{range}(A)$, $\sigma_{\min}(A)$ and $\sigma_{\max}(A)$ are the column space, the nonzero minimum and maximum singular values of $A$, respectively.
\end{lemma}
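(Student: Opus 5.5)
The plan is to use the compact singular value decomposition of $A$, which reduces both inequalities to a statement about a diagonal matrix with positive entries. Let $r=\operatorname{rank}(A)\geq 1$; this holds because $A\neq 0$. Write $A=U_r\Sigma_rV_r^T$, where $U_r\in\mathbb{R}^{m\times r}$ and $V_r\in\mathbb{R}^{n\times r}$ have orthonormal columns. Here $\Sigma_r=\operatorname{diag}(\sigma_1,\dots,\sigma_r)$ with $\sigma_1\geq\cdots\geq\sigma_r>0$. In the notation of the lemma, $\sigma_{\max}(A)=\sigma_1$, and $\sigma_{\min}(A)=\sigma_r$ is the smallest \emph{nonzero} singular value.

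First, I identify the column space: $\text{range}(A)=\text{range}(U_r)$. The inclusion $\subseteq$ is clear from $Ax=U_r(\Sigma_rV_r^Tx)$. For the reverse inclusion, note that $\Sigma_rV_r^T$ has full row rank $r$, so it maps onto $\mathbb{R}^r$. Hence every $u\in\text{range}(A)$ can be written as $u=U_rc$ for some $c\in\mathbb{R}^r$, and since $U_r^TU_r=I_r$ we get $\|u\|_2=\|c\|_2$.

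Second, I compute $A^Tu=V_r\Sigma_rU_r^TU_rc=V_r\Sigma_rc$. Because $V_r^TV_r=I_r$, this gives
\begin{equation*}
\|A^Tu\|_2^2=\|\Sigma_rc\|_2^2=\sum_{j=1}^r\sigma_j^2c_j^2 .
\end{equation*}
Bounding each $\sigma_j^2$ below by $\sigma_r^2$ and above by $\sigma_1^2$ yields
\begin{equation*}
\sigma_r^2\|c\|_2^2\leq\|A^Tu\|_2^2\leq\sigma_1^2\|c\|_2^2 .
\end{equation*}
Substituting $\|c\|_2=\|u\|_2$ gives the claim.

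There is no serious obstacle; the argument is routine linear algebra. The only point needing care is the role of the hypothesis $u\in\text{range}(A)$. The lower bound fails for general $u$, since $\ker(A^T)=\text{range}(A)^\perp$ may be nontrivial. This is exactly why $u$ must lie in $\text{range}(U_r)$ and why $\sigma_{\min}$ must be taken as the smallest nonzero singular value. The upper bound, by contrast, holds for all $u\in\mathbb{R}^m$, because $\|A^T\|_2=\sigma_1$.
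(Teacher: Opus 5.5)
Your proof is correct. The paper gives no argument of its own and only cites Horn and Johnson; your compact-SVD argument is the standard textbook proof behind that citation. It reduces the claim to $\|A^Tu\|_2^2=\|\Sigma_r c\|_2^2=\sum_j\sigma_j^2c_j^2$ with $u=U_rc$ and $\|c\|_2=\|u\|_2$.
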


Before formally analyzing the convergence of the algorithm, we first comment on the computability of the iterative schemes \eqref{eq:abnkam1} and \eqref{eq:abnkam2}.
As discussed in Section \ref{sec:kernal}, the computability of \eqref{eq:abnkam1} relies on 
the condition $\Delta_k \neq 0$. In this case, we will illustrate the acceleration effect of the momentum term and 
make the convergence analysis of Algorithm \ref{alg:ABNKAm} in Section \ref{sec:con 1}. 
Regarding the computability of Algorithm \ref{alg:ABNKAm2}, it is sufficient to ensure that $\|(F^{\prime}_{\mathcal{J}_k}(x_k))^\mathsf{T} F_{\mathcal{J}_k}(x_k)\|^2_2\neq 0$. We have the following more specific conclusions. 
\begin{lemma}\cite{tan2024nonlinear}
	Assume \( x_0 \in D \) and \( F \) satisfies Assumptions \ref{as:asc}. If \( x_k \in B_{\delta_0(x_0)} \) and \( F(x_k) \neq 0 \), 
	then 
	$\|(F^{\prime}_{\mathcal{J}_k}(x_k))^\mathsf{T} F_{\mathcal{J}_k}(x_k)\|^2_2\neq 0$.
\end{lemma}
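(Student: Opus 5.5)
We want to show that if $x_k\in B_{\delta_0}(x_0)$ and $F(x_k)\neq 0$, then $\|(F'_{\mathcal{J}_k}(x_k))^{\mathsf T}F_{\mathcal{J}_k}(x_k)\|_2^2\neq 0$. The plan is to argue by contradiction: suppose $(F'_{\mathcal{J}_k}(x_k))^{\mathsf T}F_{\mathcal{J}_k}(x_k)=0$. The key is to pair this vector with $x_k-x_*$, where $x_*\in B_{\delta_0}(x_0)$ is the $x_0$-minimum-norm solution from Lemma \ref{le:min sol}(ii). Its existence inside the ball follows from Assumption \ref{as:asc}(iv), since $B_{\delta_0/4}(x_0)\cap\ker(F)\neq\emptyset$.

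First, because $F(x_k)\neq 0$, we have $\max_i|F_i(x_k)|>0$. The greedy rule \eqref{eq:greed} with $\theta\in(0,1]$ then makes $\mathcal{J}_k$ nonempty, and every $i\in\mathcal{J}_k$ satisfies $|F_i(x_k)|^2\ge\theta\max_j|F_j(x_k)|^2>0$. Hence $F_{\mathcal{J}_k}(x_k)\neq 0$, and in fact $F_i(x_k)\neq0$ for every $i\in\mathcal{J}_k$.

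Second, we compute
\begin{align*}
0=\bigl\langle (F'_{\mathcal{J}_k}(x_k))^{\mathsf T}F_{\mathcal{J}_k}(x_k),\,x_k-x_*\bigr\rangle
=\bigl\langle F_{\mathcal{J}_k}(x_k),\,F'_{\mathcal{J}_k}(x_k)(x_k-x_*)\bigr\rangle .
\end{align*}
Since $F_{\mathcal{J}_k}(x_*)=0$, the second inequality of Lemma \ref{le:cy} (with $x=x_k$, $\widetilde x=x_*$) lets us write $F'_{\mathcal{J}_k}(x_k)(x_k-x_*)=F_{\mathcal{J}_k}(x_k)-R$ with $\|R\|_2\le\eta\|F_{\mathcal{J}_k}(x_k)\|_2$. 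Substituting and applying Cauchy--Schwarz gives
\[
0\ge \|F_{\mathcal{J}_k}(x_k)\|_2^2-\eta\|F_{\mathcal{J}_k}(x_k)\|_2^2=(1-\eta)\|F_{\mathcal{J}_k}(x_k)\|_2^2 .
\]
Because $\eta<1/2<1$, this forces $F_{\mathcal{J}_k}(x_k)=0$, contradicting the first step. Working componentwise with \eqref{eq:local cond} gives the same bound directly: $F_i(x_k)\nabla F_i(x_k)^{\mathsf T}(x_k-x_*)\ge(1-\eta_i)F_i(x_k)^2$, and summing over $i\in\mathcal{J}_k$ gives $0\ge(1-\eta)\|F_{\mathcal{J}_k}(x_k)\|_2^2$.

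The main point to check is that Lemma \ref{le:cy} and \eqref{eq:local cond} may be applied, which requires both $x_k$ and $x_*$ to lie in $B_{\delta_0}(x_0)$. This holds by hypothesis for $x_k$ and by Lemma \ref{le:min sol}(ii) for $x_*$. Assumption (v) together with Lemma \ref{le:as odd} is not needed for this argument; it would only give the weaker fact that each $\nabla F_i$ is not identically zero, whereas the tangential cone argument above settles the claim directly.
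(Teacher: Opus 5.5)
Your argument is correct and complete. The paper gives no proof of its own here; it only cites \cite{tan2024nonlinear}. Your route is the standard one behind that citation: pair $(F'_{\mathcal{J}_k}(x_k))^{\mathsf T}F_{\mathcal{J}_k}(x_k)$ with $x_k-x_*$, and use the tangential cone condition (Lemma~\ref{le:cy}) to get $\langle (F'_{\mathcal{J}_k}(x_k))^{\mathsf T}F_{\mathcal{J}_k}(x_k),x_k-x_*\rangle\ge(1-\eta)\|F_{\mathcal{J}_k}(x_k)\|_2^2>0$. It also gives the quantitative bound $\|(F'_{\mathcal{J}_k}(x_k))^{\mathsf T}F_{\mathcal{J}_k}(x_k)\|_2\,\|x_k-x_*\|_2\ge(1-\eta)\|F_{\mathcal{J}_k}(x_k)\|_2^2$. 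Any zero of $F$ in $B_{\delta_0/4}(x_0)$ from Assumption~\ref{as:asc}(iv) would serve in place of the minimum-norm solution.

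One small correction to your closing remark. Assumption~\ref{as:asc}(v) together with Lemma~\ref{le:as odd} gives more than ``not identically zero'': each $\nabla F_i$ vanishes \emph{nowhere} in $B_{\delta_0}(x_0)$. A single zero of $\nabla F_i$ would force $F_i\equiv 0$ on the ball, hence $\nabla F_i(\widetilde x)=0$, contradicting (v). Even so, nonvanishing rows can still cancel in $(F'_{\mathcal{J}_k}(x_k))^{\mathsf T}F_{\mathcal{J}_k}(x_k)$. So your conclusion stands: (v) is neither needed nor sufficient for this lemma, and the tangential cone argument is what settles it.
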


\begin{corollary}
	Let $x_0 \in D$ and suppose $F$ satisfies Assumptions \ref{as:asc}. If $x_k \in B_{\delta_0}(x_0)$ and $F(x_k) \neq 0$, then the iterate $x_{k + 1}$ defined by \eqref{eq:abnkam2} is computable.
\end{corollary}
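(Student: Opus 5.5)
The corollary follows by inspecting the two branches of \eqref{eq:abnkam2} and checking that every denominator is nonzero. The only quantities that could make the update undefined are the norm $\|F'_{\mathcal{J}_k}(x_k)\|$, the parameter $\Delta_k$, and $\|(F'_{\mathcal{J}_k}(x_k))^{\mathsf{T}}F_{\mathcal{J}_k}(x_k)\|_2^2$. Everything else consists of finite sums, inner products and function evaluations. These are well defined because $x_k\in B_{\delta_0}(x_0)\subseteq D$ and, by Assumption \ref{as:asc}(i), each $F_i$ is differentiable there.

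First I will treat the common factor $(F'_{\mathcal{J}_k}(x_k))^{\mathsf{T}}F_{\mathcal{J}_k}(x_k)$. Since $F(x_k)\neq 0$, we have $\max_i|F_i(x_k)|>0$. Because $\theta\in(0,1]$, the index attaining this maximum lies in $\mathcal{J}_k$, so $\mathcal{J}_k\neq\emptyset$ and $F_{\mathcal{J}_k}(x_k)\neq 0$. The preceding lemma (from \cite{tan2024nonlinear}) then gives $(F'_{\mathcal{J}_k}(x_k))^{\mathsf{T}}F_{\mathcal{J}_k}(x_k)\neq 0$. This has two consequences. The submatrix $F'_{\mathcal{J}_k}(x_k)$ is nonzero, so $\|F'_{\mathcal{J}_k}(x_k)\|\neq 0$ (for the Frobenius norm arising from the weights \eqref{eq:weights}, or for any norm). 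In addition, $\|(F'_{\mathcal{J}_k}(x_k))^{\mathsf{T}}F_{\mathcal{J}_k}(x_k)\|_2^2>0$.

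Next I split according to the two cases in \eqref{eq:abnkam2}.
\begin{itemize}
\item[(a)] In the momentum branch, the rule requires $|\Delta_k|\geq\varepsilon>0$. Hence $\Delta_k\neq 0$, and the formulas for $\alpha_k$ and $\beta_k$ in \eqref{eq:abnkam1} are finite. Their remaining denominator, $\|F'_{\mathcal{J}_k}(x_k)\|$, is nonzero by the step above.
\item[(b)] In the fallback branch, the only denominator is $\|(F'_{\mathcal{J}_k}(x_k))^{\mathsf{T}}F_{\mathcal{J}_k}(x_k)\|_2^2$, which we have shown is positive.
\end{itemize}
In both cases $x_{k+1}$ is a well-defined vector.

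The one subtle point is that the branch test itself must be meaningful: $\beta_k$ is evaluated before the test, and its denominator $\Delta_k$ could vanish. I will handle this by reading the test with $|\Delta_k|\geq\varepsilon$ checked first. If it fails, the fallback branch is taken and $\beta_k$ is not needed. If it holds, $\beta_k$ is finite. A second minor point is the case $k=0$, where $x_{-1}$ is undefined. Adopting the convention $x_{-1}=x_0$ gives $\Delta_0=0$, which falls into the fallback branch, consistent with \eqref{eq:simple approx sol}. The key nontrivial input is the cited lemma, which rests on Assumption \ref{as:asc}(v) through Lemma \ref{le:as odd}.
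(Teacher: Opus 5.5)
Your proof is correct and follows the paper's route. The paper only observes that computability of \eqref{eq:abnkam2} reduces to $\|(F'_{\mathcal{J}_k}(x_k))^{\mathsf{T}}F_{\mathcal{J}_k}(x_k)\|_2\neq 0$, which the preceding lemma supplies, with the threshold on $|\Delta_k|$ built into the branch rule; your explicit handling of $\varepsilon>0$, the evaluation order of the branch test, and the convention $x_{-1}=x_0$ fills in details the paper leaves implicit.
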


\subsection{Convergence analysis of Algorithm \ref{alg:ABNKAm}}\label{sec:con 1}
We first establish the convergence of Algorithm \ref{alg:ABNKAm}. 
A key step in our analysis is to derive a recursive estimate for the error sequence $x_{k}-x_*$, 
which will be shown to satisfy certain contraction conditions. We shall need the following basic 
recurrence estimate.

\begin{lemma}\label{eq: key_le}\cite{loizou2020momentum}
	Fix $E_{1} = E_{0} \geq 0$ and let $\{E_{k}\}_{k \geq 0}$ be a nonnegative real sequence satisfying that
	$E_{k+1} \leq a_1 E_{k} + a_2 E_{k-1}$ for all $k \geq 1$,
	where $a_2 \geq 0$, $a_1 + a_2 < 1$ and at least one of the coefficients $a_1$, $a_2$ is positive. Then the sequence satisfies the estimae $E_{k+1} \leq q^k (1 + \xi) E_0$ for all $k \geq 1$, where $q = \frac{a_1 + \sqrt{a_1^2 + 4a_2}}{2}$ and $\xi = q - a_1 \geq 0$. Moreover, we have
	$ a_1 + a_2\leq q<1$ and the equality holds if and only if $a_2 = 0$ (or $q = a_1$ and $\xi = 0$).
\end{lemma}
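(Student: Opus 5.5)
We prove the statement by induction on $k$, using the characteristic root $q$ of the two-term recurrence.

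\emph{Step 1: properties of $q$ and $\xi$.} Consider $t^2-a_1t-a_2=0$. Its nonnegative root is $q=\frac{a_1+\sqrt{a_1^2+4a_2}}{2}$, so $q^2=a_1q+a_2$. If $a_1\ge 0$, then $q\ge a_1\ge 0$. If $a_1<0$, then $a_2>0$ by hypothesis and $\sqrt{a_1^2+4a_2}>|a_1|$, so again $q>0>a_1$. Hence $\xi=q-a_1\ge 0$.

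Next set $p(t)=t^2-a_1t-a_2$. Then $p(1)=1-a_1-a_2>0$. Since $q$ is the larger root, $p>0$ on $(q,\infty)$; as $p(1)>0$ and $1$ cannot lie between the two roots, we get $q<1$.

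For $a_1+a_2\le q$: because $q\le 1$ and $a_2\ge 0$, we have $a_2\le a_2/q$ when $q>0$. Therefore $q=a_1+a_2/q\ge a_1+a_2$, with equality iff $a_2(1/q-1)=0$. Since $q<1$, this holds iff $a_2=0$. When $a_2=0$, $q=\max(a_1,0)$. Since $a_1$ must then be positive, $q=a_1$ and $\xi=0$. The case $q=0$ would force $a_1\le 0$ and $a_2=0$, which the hypothesis excludes.

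\emph{Step 2: the induction.} The key observation is the identity $a_2=q\xi$, since $q(q-a_1)=q^2-a_1q=a_2$. We prove the stronger joint statement
\[
E_{k+1}+\xi E_k\le q\,(E_k+\xi E_{k-1}),\qquad k\ge 1.
\]
Adding $\xi E_k$ to both sides of the recurrence gives
\[
E_{k+1}+\xi E_k\le (a_1+\xi)E_k+a_2E_{k-1}=qE_k+q\xi E_{k-1},
\]
which is exactly the claim. Iterating yields
\[
E_{k+1}\le E_{k+1}+\xi E_k\le q^k(E_1+\xi E_0)=q^k(1+\xi)E_0,
\]
where we use $E_k\ge0$, $\xi\ge0$, and $E_1=E_0$.

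The only delicate point is Step 1: handling the sign of $a_1$, which may be negative, and confirming $q<1$ together with the equality case. The induction itself follows at once once the Lyapunov-type combination $E_{k+1}+\xi E_k$ and the identity $a_2=q\xi$ are in hand.
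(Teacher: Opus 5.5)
Your proof is correct and uses the same argument the paper relies on. The lemma is quoted from \cite{loizou2020momentum} without proof, but the identical step $E_{k+1}+\xi E_k\le (a_1+\xi)E_k+a_2E_{k-1}\le q(E_k+\xi E_{k-1})$ appears in the paper's proof of Lemma~\ref{eq: key_le2}, and your identity $a_2=q\xi$ is just the equality form of the paper's $a_2\le (a_1+\xi)\xi$. One small tightening: when deducing $q<1$ from $p(1)>0$, state explicitly that the other root $\frac{a_1-\sqrt{a_1^2+4a_2}}{2}$ is $\le 0<1$, so $1$ must lie to the right of $q$.
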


Following Lemma \ref{eq: key_le}, we now derive a recursive estimate 
of the form $E_{k+1} \leq a_1 E_k + a_2 E_{k-1}$ associated with Algorithm \ref{alg:ABNKAm}, 
and then achieve the convergence of Algorithm \ref{alg:ABNKAm} (see Theorem \ref{th:ideal}).

\begin{lemma}\label{le:estimate1}
	Suppose that  $F(x)$ and $x_0$ satisfies Assumption \ref{as:asc} and $x_*\in D$ is the unique $x_0$-minimum-norm solution of the nonlinear equation \eqref{eq:eqs}.
	Assume that $x_j\in B_{\delta_0}(x_0)$ for all $j\in\{1,\cdots,k\}$ and $\Delta_k\neq 0$. 
	Then the new approximation $x_{k+1}$  generated by the iteration \eqref{eq:abnkam1} satisfies 
	\vskip -15pt  
	\begin{equation}
		\begin{aligned}\label{eq:est}
			\|x_{k+1}-x_*\|_2^2
			\leq &(1+3\beta_k+2\beta_k^2)\|x_k-x_*\|_2^2 + (2\beta_k^2+\beta_k)\|x_{k-1}-x_*\|^2_2\\
			&-(1-2\eta+\cot^2\theta_k)\frac{\left\|F_{\mathcal{J}_k}(x_k)\right\|_2^2}{\sigma^2_{\max}(F_{\mathcal{J}_k}^{\prime}(x_k))\sin^2\theta_k},
		\end{aligned}
	\end{equation}
	where $\theta_k:=\arccos \left(\frac{F_{\mathcal{J}_k}(x_k)^TF^{\prime}_{\mathcal{J}_k}(x_k) (x_k-x_{k-1})}{\|F^{\prime}_{\mathcal{J}_k}(x_k)^TF_{\mathcal{J}_k}(x_k)\|\cdot \|x_k-x_{k-1}\|} \right)$.
\end{lemma}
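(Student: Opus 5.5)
The plan is to expand $\|x_{k+1}-x_*\|_2^2$ directly from the update \eqref{eq:abnkam1} and to handle the momentum part and the gradient part separately. Throughout I use the weights \eqref{eq:weights}. With these weights, write $g_k:=(F'_{\mathcal{J}_k}(x_k))^{\mathsf T}F_{\mathcal{J}_k}(x_k)$, so that $d_k=g_k/\|F'_{\mathcal{J}_k}(x_k)\|_F^2$. I also set
\[
u_k:=x_k-x_{k-1},\qquad c_k:=\sum_{i\in\mathcal{J}_k}\omega_k^{(i)}\frac{F_i(x_k)^2}{\|\nabla F_i(x_k)\|_2^2}=\frac{\|F_{\mathcal{J}_k}(x_k)\|_2^2}{\|F'_{\mathcal{J}_k}(x_k)\|_F^2}.
\]
In the notation $a=\|d_k\|_2$, $b=\|u_k\|_2$, $s=\langle d_k,u_k\rangle$, we have $\cos\theta_k=s/(ab)$ and $\Delta_k=a^2b^2\sin^2\theta_k\neq0$. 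Hence $\alpha_k=b^2c_k/\Delta_k$ and $\beta_k=s\,c_k/\Delta_k$.

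\textbf{Step 1 (expansion).} Writing $x_{k+1}-x_*=(x_k-x_*+\beta_k u_k)-\alpha_k d_k$, I expand as
\[
\|x_{k+1}-x_*\|_2^2=\|x_k-x_*+\beta_k u_k\|_2^2-2\alpha_k\langle d_k,x_k-x_*\rangle-2\alpha_k\beta_k s+\alpha_k^2a^2 .
\]

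\textbf{Step 2 (momentum part).} Use the polarization identity $2\langle x_k-x_*,u_k\rangle=\|x_k-x_*\|^2+\|u_k\|^2-\|x_{k-1}-x_*\|^2$, which gives
\[
\|x_k-x_*+\beta_k u_k\|^2=(1+\beta_k)\|x_k-x_*\|^2+(\beta_k+\beta_k^2)\|u_k\|^2-\beta_k\|x_{k-1}-x_*\|^2 .
\]
Then bound $\|u_k\|^2\le 2\|x_k-x_*\|^2+2\|x_{k-1}-x_*\|^2$. This step requires $\beta_k+\beta_k^2\ge0$, which holds for the relevant regime $\beta_k\ge0$; I will state this explicitly. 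The result is the coefficients $1+3\beta_k+2\beta_k^2$ and $\beta_k+2\beta_k^2$ appearing in \eqref{eq:est}.

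\textbf{Step 3 (tangential cone).} Since $x_k\in B_{\delta_0}(x_0)$ and $F(x_*)=0$, Assumption \ref{as:asc}(iii) gives
\[
F'_{\mathcal{J}_k}(x_k)(x_k-x_*)=F_{\mathcal{J}_k}(x_k)+r,\qquad |r_i|\le\eta|F_i(x_k)| .
\]
Consequently
\[
\langle d_k,x_k-x_*\rangle=\frac{\langle F_{\mathcal{J}_k}(x_k),F'_{\mathcal{J}_k}(x_k)(x_k-x_*)\rangle}{\|F'_{\mathcal{J}_k}(x_k)\|_F^2}\ge(1-\eta)c_k .
\]
Here I need $\alpha_k\ge0$, which is clear from its formula. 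Combining this with the exact algebra $\alpha_ka^2-2\beta_ks=c_k(a^2b^2-2s^2)/\Delta_k$, the remaining three terms are bounded by
\[
\alpha_kc_k\Bigl[-2(1-\eta)+\tfrac{\sin^2\theta_k-\cos^2\theta_k}{\sin^2\theta_k}\Bigr]=-\alpha_kc_k\,(1-2\eta+\cot^2\theta_k).
\]

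\textbf{Step 4 (final bound).} Compute
\[
\alpha_kc_k=\frac{c_k^2}{a^2\sin^2\theta_k}=\frac{\|F_{\mathcal{J}_k}(x_k)\|_2^4}{\|g_k\|_2^2\sin^2\theta_k}.
\]
Then apply Lemma \ref{le:mvs} (only its upper bound, $\|g_k\|_2^2\le\sigma_{\max}^2(F'_{\mathcal{J}_k}(x_k))\|F_{\mathcal{J}_k}(x_k)\|_2^2$, which is valid without any range condition). This yields $\alpha_kc_k\ge \|F_{\mathcal{J}_k}(x_k)\|_2^2/(\sigma_{\max}^2\sin^2\theta_k)$. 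Because $\eta<1/2$, the factor $1-2\eta+\cot^2\theta_k$ is positive, so replacing $\alpha_kc_k$ by this lower bound preserves the inequality, and \eqref{eq:est} follows.

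The main obstacle is bookkeeping rather than an idea. The sign conditions must be tracked: $\alpha_k\ge0$ is needed in Step 3, and $\beta_k+\beta_k^2\ge0$ in Step 2. The trigonometric simplification in Step 3 must also come out exactly as $1-\cot^2\theta_k$.
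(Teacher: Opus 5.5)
Your proof is correct and uses the same ingredients as the paper's: polarization for the momentum part, the tangential cone condition, and the upper bound in Lemma~\ref{le:mvs}. Your terms are a regrouping of the paper's $I_1,I_2,I_{3,1},I_{3,2}$. The one substantive difference is the order in which you bound, and it works in your favour.

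The paper estimates $I_1$ by itself and reaches $-\frac{\|F_{\mathcal{J}_k}(x_k)\|_2^4}{\|g_k\|_2^2\sin^2\theta_k}\bigl[2(1-\eta)-\frac{1}{\sin^2\theta_k}\bigr]$. It then replaces $\|F_{\mathcal{J}_k}(x_k)\|_2^4/\|g_k\|_2^2$ by the smaller quantity $\|F_{\mathcal{J}_k}(x_k)\|_2^2/\sigma_{\max}^2(F'_{\mathcal{J}_k}(x_k))$. That step needs the bracket to be nonnegative, which fails whenever $\sin^2\theta_k<1/(2(1-\eta))$. Condition (C3) does not exclude this regime, since it only bounds $\sin^2\theta_k$ from above.

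You instead first merge $\alpha_k^2\|d_k\|_2^2$ with the cross term $-2\alpha_k\beta_k\langle d_k,u_k\rangle$, using the exact identity $\alpha_k a^2-2\beta_k s=c_k(a^2b^2-2s^2)/\Delta_k$. This gives the bracket $1-2\eta+\cot^2\theta_k>0$ before $\sigma_{\max}$ enters, so your substitution is legitimate. The final bound is the same, but your ordering is the one that actually proves it.

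Your added hypothesis $\beta_k\ge0$ is necessary, not just convenient. The paper's bound on $I_2+I_{3,1}$ multiplies $\|u_k\|_2^2\le2\|x_k-x_*\|_2^2+2\|x_{k-1}-x_*\|_2^2$ by $\beta_k^2+\beta_k$, so it tacitly needs the same condition. Without it the lemma is false. The two coefficients in \eqref{eq:est} factor as $(1+\beta_k)(1+2\beta_k)$ and $\beta_k(1+2\beta_k)$, so both vanish at $\beta_k=-1/2$. The right-hand side is then strictly negative, because $F_{\mathcal{J}_k}(x_k)\neq0$ follows from $\Delta_k\neq0$. Adding the hypothesis costs nothing downstream, since Theorem~\ref{th:ideal} assumes $\beta_k\in(0,\beta_{\max})$.

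One small omission in Step~3: applying Assumption~\ref{as:asc}(iii) with $\widetilde{x}=x_*$ also requires $x_*\in B_{\delta_0}(x_0)$. Lemma~\ref{le:min sol}(ii) provides this, and you should say so explicitly.
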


\begin{proof}
	It follows from iteration \eqref{eq:abnkam1} that
	\begin{equation}
		\begin{aligned}\nonumber
			\left\|x_{k+1}-x_*\right\|_2^2
			&=\left\|x_k-\alpha_k \frac{(F^{\prime}_{\mathcal{J}_k}(x_k))^TF_{\mathcal{J}_k}(x_k)}{\|F^{\prime}_{\mathcal{J}_k}(x_k))\|^2}+\beta_k(x_k-x_{k-1})-x_* \right\|_2^2\\
			&=\left\|x_k-\alpha_k \frac{(F^{\prime}_{\mathcal{J}_k}(x_k))^TF_{\mathcal{J}_k}(x_k)}{\|F^{\prime}_{\mathcal{J}_k}(x_k))\|^2}-x_* \right\|_2^2+\beta_k^2\left\|x_k-x_{k-1}\right\|_2^2\\&+2\left\langle x_k-\alpha_k\frac{(F_{\mathcal{J}_k}^{\prime}(x_k))^TF_{\mathcal{J}_k}(x_k)}{\left\|F_{\mathcal{J}_k}^{\prime}(x_k)\right\|_2^2}
			-x_*,\beta_k (x_k-x_{k-1})\right\rangle\\
			&:=I_{1}+I_2+I_3.
		\end{aligned}
	\end{equation}
	
	We now estimate $I_{1}$, $I_2$ and $I_3$ one by one. Firstly, we can directly derive 
	\begin{equation}\nonumber
		\begin{aligned}
			I_{1} & =\left\|x_k-x_*\right\|_2^2+\left\|\alpha_k\frac{(F_{\mathcal{J}_k}^{\prime}(x_k))^TF_{\mathcal{J}_k}(x_k)}{\left\|F_{\mathcal{J}_k}^{\prime}(x_k)\right\|_2^2}\right\|_2^2
			-2\left\langle\alpha_k\frac{(F_{\mathcal{J}_k}^{\prime}(x_k))^TF_{\mathcal{J}_k}(x_k)}{\left\|F_{\mathcal{J}_k}^{\prime}(x_k)\right\|_2^2},x_k-x_*\right\rangle \\
			&  =\left\|x_k-x_*\right\|_2^2+\left\|\frac{\|x_k-x_{k-1}\|^2_2\|F_{\mathcal{J}_k}(x_k)\|^2_2}{\Delta_k}(F^{\prime}_{\mathcal{J}_k}(x_k))^TF_{\mathcal{J}_k}(x_k)\right\|_2^2 \\
			&+2\left\langle-\frac{\|x_k-x_{k-1}\|^2_2\|F_{\mathcal{J}_k}(x_k)\|^2_2}{\Delta_k}(F^{\prime}_{\mathcal{J}_k}(x_k))^TF_{\mathcal{J}_k}(x_k),x_k-x_*\right\rangle \\
			& =2\frac{\|x_k-x_{k-1}\|^2_2\|F_{\mathcal{J}_k}(x_k)\|^2_2}{\Delta_k}F_{\mathcal{J}_k}(x_k)^T \left[ F_{\mathcal{J}_k}(x_k)-F_{\mathcal{J}_k}(x_*)-F^{\prime}_{\mathcal{J}_k}(x_k)(x_k-x_*)\right]+ \\
			&\left\|x_k-x_*\right\|_2^2+\frac{\|x_k-x_{k-1}\|^2_2\|F_{\mathcal{J}_k}(x_k)\|^4_2}{\Delta_k} \left[ \frac{\|x_k-x_{k-1}\|^2_2}{\Delta_k}\left\|F^{\prime}_{\mathcal{J}_k}(x_k)^TF_{\mathcal{J}_k}(x_k)\right\|_2^2-2 \right]. 
		\end{aligned}
	\end{equation}
	By Cauchy-Schwarz inequality, Lemma \ref{le:cy}(ii) and Lemma \ref{le:mvs}, we have 
	\begin{equation}\nonumber
		\begin{aligned}
			I_{1} &\leq \left\|x_k-x_*\right\|_2^2+2\eta\frac{\|x_k-x_{k-1}\|^2_2\|F_{\mathcal{J}_k}(x_k)\|^2_2}{\Delta_k}\left\|F_{\mathcal{J}_k}(x_k)\right\|_2^2\\
			&+\frac{\|x_k-x_{k-1}\|^2_2\|F_{\mathcal{J}_k}(x_k)\|^4_2}{\Delta_k} \left[ \frac{\|x_k-x_{k-1}\|^2_2}{\Delta_k}\left\|F^{\prime}_{\mathcal{J}_k}(x_k)^TF_{\mathcal{J}_k}(x_k)\right\|_2^2-2 \right] \\
			&= \left\|x_k-x_*\right\|_2^2-\frac{\|F_{\mathcal{J}_k}(x_k)\|^4_2}{\|F^{\prime}_{\mathcal{J}_k}(x_k)^TF_{\mathcal{J}_k}(x_k)\|^2_2(1-\cos^2\theta_k)} \left[ 2(1-\eta)-\frac{1}{1-\cos^2\theta_k}\right] \\
			&\leq \left\|x_k-x_*\right\|_2^2-\frac{\|F_{\mathcal{J}_k}(x_k)\|^2_2}{\sigma^2_{\max}(F^{\prime}_{\mathcal{J}_k}(x_k))\sin^2\theta_k} \left[2(1-\eta)-\frac{1}{\sin^2\theta_k}\right].
		\end{aligned}
	\end{equation}
	
	For $I_3$, we have
	\begin{equation}\nonumber
		\begin{aligned}
			I_3&=2\left\langle x_k-\alpha_k\frac{(F_{\mathcal{J}_k}^{\prime}(x_k))^TF_{\mathcal{J}_k}(x_k)}{\left\|F_{\mathcal{J}_k}^{\prime}(x_k)\right\|_2^2}-x_*,\beta_k (x_k-x_{k-1})\right\rangle\\
			&=2\beta_k\langle x_k-x_*,x_k-x_{k-1}\rangle - 2\left\langle \alpha_k\frac{(F_{\mathcal{J}_k}^{\prime}(x_k))^TF_{\mathcal{J}_k}(x_k)}{\left\|F_{\mathcal{J}_k}^{\prime}(x_k)\right\|_2^2},\beta_k (x_k-x_{k-1})\right\rangle\\
			&:=I_{3,1}+I_{3,2}.
		\end{aligned}
	\end{equation}
	Using the Cauchy-Schwarz inequality and Lemma \ref{le:mvs}, we can deduce 
	\begin{equation}\nonumber
		\begin{aligned}
			I_{3,2}&=- 2\left\langle \alpha_k\frac{(F_{\mathcal{J}_k}^{\prime}(x_k))^TF_{\mathcal{J}_k}(x_k)}{\left\|F_{\mathcal{J}_k}^{\prime}(x_k)\right\|_2^2},\beta_k (x_k-x_{k-1})\right\rangle\\
			&=-2\frac{\|x_k-x_{k-1}\|^2_2\|F_{\mathcal{J}_k}(x_k)\|^4_2}{\Delta^2_k}\left\langle F^{\prime}_{\mathcal{J}_k}(x_k)^TF_{\mathcal{J}_k}(x_k),x_k-x_{k-1}\right\rangle^2\\
			&=-2\frac{\|x_k-x_{k-1}\|^4_2\|F_{\mathcal{J}_k}(x_k)\|^4_2\|F^{\prime}_{\mathcal{J}_k}(x_k)^TF_{\mathcal{J}_k}(x_k)\|^2_2\cos^2\theta_k}{\|F^{\prime}_{\mathcal{J}_k}(x_k)^TF_{\mathcal{J}_k}(x_k)\|^4_2\|x_k-x_{k-1}\|^4_2(1-\cos^2\theta_k)^2}\\
			&\leq -2\frac{\|F_{\mathcal{J}_k}(x_k)\|^2_2\cos^2\theta_k}{\sigma^2_{\max}(F^{\prime}_{\mathcal{J}_k}(x_k))(1-\cos^2\theta_k)^2}\,,
		\end{aligned}
	\end{equation}
	while we can estimate $I_{2}$ and $I_{3,1}$ together as follows:
	\begin{equation}\nonumber
		\begin{aligned}
			I_{2}+I_{3,1}
			&=\beta_k^2\left\|x_k-x_{k-1}\right\|_2^2+2\beta_k\langle x_k-x_*,x_k-x_{k-1}\rangle \\
			&=\beta_k^2\left\|x_k-x_{k-1}\right\|_2^2+2\beta_k \left\| x_k-x_*\right\|_2^2 +2\beta_k\langle x_k-x_*,x_*-x_{k-1}\rangle \\
			&=\beta_k^2\left\|x_k-x_{k-1}\right\|_2^2+2\beta_k \left\| x_k-x_*\right\|_2^2 +\beta_k\left\|x_k-x_{k-1}\right\|_2^2\\
			&\quad -\beta_k \left\| x_k-x_*\right\|_2^2-\beta_k\left\| x_{k-1}-x_*\right\|_2^2\\
			&\leq(2\beta_k^2+\beta_k)\left\| x_{k-1}-x_*\right\|_2^2+(2\beta_k^2+3\beta_k)\left\| x_{k}-x_*\right\|_2^2
		\end{aligned}
	\end{equation}
	where we have used in the third equality above the identity that 
	$2\langle A-C, C-B\rangle = \|A-B\|_2^2 - \|C-B\|_2^2 - \|A-C\|_2^2$
	for any vectors $A, B,C\in\mathbb{R}^n$. 
	
	By combining all the estimates for $I_1, I_2$ and $I_3$, the expected estimate follows. 
\end{proof}

To further investigate the acceleration mechanism of the momentum effect, we now derive some direct comparison 
between the rate of convergence 
of Algorithm \ref{alg:ABNKAm} (under $\Delta_k \neq 0$ for all $k \geq 0$) with that of the ABNK method \cite{xiao2024averaging} (the scheme \eqref{eq:abnk}). 
For this purpose, we first introduce some constants.

Let $\varepsilon_0=\frac{\eta(1+\eta^2)+\sqrt{\eta^2(1+\eta^2)^2+(1+\eta^2)(1-2\eta^2-4\eta^3)})}{1-2\eta^2-4\eta^3}$, 
where $\eta\in(0,\frac{1}{2})$ is given in Assumption \ref{as:asc}(iii). Then $\varepsilon_0 \in (0,1)$.
For any $\gamma\in (\varepsilon_0/2,  \varepsilon_0)$,
we define 
$C_1(\eta, \gamma):= \sqrt{\frac{(1-2\eta\gamma)(1+\eta^2)}{(1-2\eta^2-4\eta^3)\gamma^2}}$,
and 
\begin{align*}
	C_2(\eta, \gamma) &:= \min \Big\{ \frac{\sqrt{(1+3d^2)^2+8(1+d^2)(d^4+d^2c-d^2)}-(1+3d^2)}{4(1+d^2)}, \frac{\sqrt{1+c}-1}{2} \Big\},
\end{align*}
where $c=\frac{1-2\eta\gamma}{(1+\eta^2)K^2\gamma^2}$ and $d=1-\frac{1-2\eta}{1+\eta^2}$.
We can easily verify that \( C_1(\eta, \gamma) > 1 \) and \( C_2(\eta, \gamma) > 0 \).

\begin{theorem}\label{th:ideal}
	Let $F(x)$ satisfy Assumption \ref{as:asc} with $x_*$ being the unique $x_0$-minimum-norm solution of $F(x)=0$. Assume $\ker(F'(x_*)) \subseteq \ker(F'(x))$ for all $x \in B_{\delta_0/2}(x_*)$, and let $\{x_k\}$ be the sequence generated by Algorithm \ref{alg:ABNKAm} with $\Delta_k \neq 0$ for all $k \geq 0$.
	We assume that there exists $\gamma\in (\varepsilon_0/2,  \varepsilon_0)$ such that the following conditions hold for all \(k\).
	\begin{enumerate}
		\item[(C1)] $\kappa(F'_{\mathcal{J}k}(x_k)) \leq K$ for some $K \in [1, C_1(\eta, \gamma))$,
		\item[(C2)] $\beta_k \in (0, \beta_{\max})$ for some $\beta_{\max} \in (0, C_2(\eta, \gamma))$,
		\item[(C3)] $\sin^2\theta_k \leq \gamma$,
	\end{enumerate}
	then the following conclusions hold:
	
	(i) $\{x_k\} \subseteq B_{\delta_0/2}(x_*) \subseteq B_{\delta_0}(x_0)$,
	
	(ii) If $F(x_k) \neq 0$ for some $k$, then $x_{k+1} - x_* \in \ker(F'(x_*))^\perp$ and
	\[
	\|x_{k+1} - x_*\|_2^2 \leq q^k(1+\xi)\|x_0 - x_*\|_2^2,
	\]
	where $q = \frac{b_1 + \sqrt{b_1^2 + 4b_2}}{2}$, $\xi = q - b_1$ with
	$b_1=1+3\beta_{\max}+2\beta_{max}^2-\frac{1-2\eta\delta}{(1+\eta^2)K^2 \delta^2},\,b_2=2\beta_{\max}^2+\beta_{\max} $.
	
	Furthermore, the estimate $q< d^2 < 1$ holds for the rate $q$ of convergence.
\end{theorem}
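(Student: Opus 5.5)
The argument is an induction on $k$ that carries two properties together. The first is membership $x_k \in B_{\delta_0/2}(x_*)$. The second is the two-step recursion of Lemma \ref{le:estimate1}, reduced to the form $E_{k+1}\le b_1E_k+b_2E_{k-1}$ with $E_k:=\|x_k-x_*\|_2^2$. Once the recursion holds for every $k$, Lemma \ref{eq: key_le} gives the rate. For the base case, (iv) of Assumption \ref{as:asc} gives $\|x_0-x_*\|_2<\delta_0/4$, so $x_0\in B_{\delta_0/2}(x_*)$. The first step $x_1$ is defined through \eqref{eq:simple approx sol}, and for it I would verify $E_1\le E_0$ directly, using Lemma \ref{le:cy} and $\eta<1/2$. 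The inclusion $B_{\delta_0/2}(x_*)\subseteq B_{\delta_0}(x_0)$ is the triangle inequality, since $\delta_0/4+\delta_0/2<\delta_0$. This keeps every iterate inside the region where Assumption \ref{as:asc}(iii) and Lemma \ref{le:cy} apply.

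For the inductive step, I assume $x_j\in B_{\delta_0/2}(x_*)$ for $j\le k$ and apply Lemma \ref{le:estimate1}. The goal is to bound the negative term from below,
\[
(1-2\eta+\cot^2\theta_k)\frac{\|F_{\mathcal{J}_k}(x_k)\|_2^2}{\sigma_{\max}^2(F'_{\mathcal{J}_k}(x_k))\sin^2\theta_k}\ \ge\ \frac{1-2\eta\gamma}{(1+\eta^2)K^2\gamma^2}\,\|x_k-x_*\|_2^2 .
\]
I would get this in three pieces. First, write $1-2\eta+\cot^2\theta=(1-2\eta\sin^2\theta)/\sin^2\theta$; in fact one should go back to the sharper form $[2(1-\eta)-1/\sin^2\theta_k]$ plus the $I_{3,2}$ term. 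Condition (C3), $\sin^2\theta_k\le\gamma$, then makes the prefactor at least $(1-2\eta\gamma)/\gamma^2$, because the expression is decreasing in $\sin^2\theta$ on that range. Second, take the third inequality of Lemma \ref{le:cy} with $\widetilde x=x_*$, which gives $\|F_{\mathcal{J}_k}(x_k)\|^2\ge \|F'_{\mathcal{J}_k}(x_k)(x_k-x_*)\|^2/(1+\eta^2)$. Third, show $x_k-x_*\in\ker(F'(x_*))^\perp\subseteq\ker(F'_{\mathcal{J}_k}(x_k))^\perp$ and combine Lemma \ref{le:mvs} with (C1); this yields $\|F'_{\mathcal{J}_k}(x_k)(x_k-x_*)\|^2\ge\sigma_{\min}^2\|x_k-x_*\|^2$, and the ratio $\sigma_{\min}^2/\sigma_{\max}^2\ge 1/K^2$. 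The orthogonality is also carried by induction. The update direction $F'_{\mathcal{J}_k}(x_k)^TF_{\mathcal{J}_k}(x_k)$ lies in $\operatorname{range}(F'(x_k)^T)\subseteq\ker(F'(x_k))^\perp\subseteq\ker(F'(x_*))^\perp$, where the last inclusion uses the hypothesis $\ker F'(x_*)\subseteq\ker F'(x)$. The momentum term $x_k-x_{k-1}$ lies in the same space by induction, and the starting point is \eqref{eq:sol x0}. Finally, replace $\beta_k$ by $\beta_{\max}$ in the positive coefficients; this is legitimate because they increase in $\beta_k>0$. The result is $E_{k+1}\le b_1E_k+b_2E_{k-1}$ with the stated $b_1,b_2$.

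Closing the induction requires $b_1+b_2<1$, or equivalently $4\beta_{\max}^2+4\beta_{\max}<c$. This holds because $\beta_{\max}<(\sqrt{1+c}-1)/2$, which is the second entry of the minimum in $C_2$. We also need $b_1>0$, or at least the correct sign conditions, and $c\le 1$ so that $b_1$ makes sense. These follow from $K<C_1(\eta,\gamma)$ and $\gamma>\varepsilon_0/2$; I expect $\varepsilon_0$ was chosen as the root that makes exactly these inequalities consistent. With $b_1+b_2<1$ the recursion gives $E_{k+1}\le\max(E_k,E_{k-1})\le E_0$, so $x_{k+1}\in B_{\delta_0/2}(x_*)$ and the induction closes. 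Lemma \ref{eq: key_le} then gives $E_{k+1}\le q^k(1+\xi)E_0$.

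The last claim, $q<d^2$, should come from the first entry of the minimum in $C_2$. I would rearrange $q<d^2$ as $b_1+\sqrt{b_1^2+4b_2}<2d^2$, which is equivalent to $b_2<d^4-b_1d^2$. Substituting $b_1$ and $b_2$ turns this into a quadratic inequality $2(1+d^2)\beta^2+(1+3d^2)\beta-(d^4+d^2c-d^2)<0$, and its positive root is exactly the first entry of $C_2$. I expect the main obstacle to be the lower bound on the negative term under (C3). The sign and monotonicity of $2(1-\eta)-1/\sin^2\theta$ combined with the $\cos^2\theta$ term must be handled carefully, and consistency of the constants $\varepsilon_0$, $C_1$, $c\le1$ has to be checked. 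That is where I would spend the effort.
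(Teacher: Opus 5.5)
\textbf{Main gap: the lower bound via $\sigma_{\min}$.} Your outline follows the paper's proof essentially step for step. This includes the induction carrying ball membership and $x_k-x_*\in\ker(F'(x_*))^\perp$, Lemma~\ref{le:estimate1}, and monotonicity of $(1-2\eta s)/s^2$ in $s=\sin^2\theta_k$ under (C3). It also includes Lemma~\ref{le:cy} with $\widetilde x=x_*$, Lemma~\ref{le:mvs} with (C1), and replacing $\beta_k$ by $\beta_{\max}$. Finally it uses the second entry of $C_2$ for $b_1+b_2<1$ and the first entry for $q<d^2$. However, your third piece is false as stated.

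You assert $\ker(F'(x_*))^\perp\subseteq\ker(F'_{\mathcal J_k}(x_k))^\perp$. The hypothesis gives $\ker F'(x_*)\subseteq\ker F'(x_k)\subseteq\ker F'_{\mathcal J_k}(x_k)$, hence only the reverse inclusion. That reverse inclusion is the one you correctly use for the update direction in the orthogonality argument.

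Lemma~\ref{le:mvs} requires $x_k-x_*\in\operatorname{range}(F'_{\mathcal J_k}(x_k)^{\mathsf T})$, and nothing supplies this. For $\mathcal J_k=\{i\}$ one has $\kappa(F'_{\mathcal J_k}(x_k))=1$. Your inequality would then read $|\nabla F_i(x_k)^{\mathsf T}(x_k-x_*)|\ge\|\nabla F_i(x_k)\|_2\|x_k-x_*\|_2$, which forces $x_k-x_*\parallel\nabla F_i(x_k)$. So the recursion $E_{k+1}\le b_1E_k+b_2E_{k-1}$ with $c=\frac{1-2\eta\gamma}{(1+\eta^2)K^2\gamma^2}$ is not established.

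The paper's proof takes the same step by citing Lemma~\ref{le:mvs} without checking the range condition, so it does not fill this hole either. To close it you need an extra hypothesis. One option is to read (C1) as a condition number restricted to $\ker(F'(x_*))^\perp$: $\|F'_{\mathcal J_k}(x_k)v\|_2\ge K^{-1}\sigma_{\max}(F'_{\mathcal J_k}(x_k))\|v\|_2$ for all $v\in\ker(F'(x_*))^\perp$. Another is to assume $x_k-x_*\in\operatorname{range}(F'_{\mathcal J_k}(x_k)^{\mathsf T})$ directly. With either, the rest of your argument goes through unchanged.

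\textbf{Two smaller remarks.} First, combining $I_1$ with $I_{3,2}$ before using $\|F'_{\mathcal J_k}(x_k)^{\mathsf T}F_{\mathcal J_k}(x_k)\|_2\le\sigma_{\max}(F'_{\mathcal J_k}(x_k))\|F_{\mathcal J_k}(x_k)\|_2$ is correct and actually necessary. Condition (C3) allows $\sin^2\theta_k<1/(2(1-\eta))$, where the bracket $2(1-\eta)-1/\sin^2\theta_k$ is negative. Applying that bound to $I_1$ alone, as the paper's proof of Lemma~\ref{le:estimate1} does, then goes the wrong way, whereas the combined bracket $1-2\eta+\cot^2\theta_k$ is positive.

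Second, $b_1>0$ and $c\le 1$ do not follow from $K<C_1(\eta,\gamma)$. That condition is equivalent to $c>1-d^2$, a lower bound on $c$. It is what makes $d^4+d^2c-d^2>0$, so that the first entry of $C_2$ is positive. In fact $c$ may exceed $1$ and $b_1$ may be negative. The recurrence lemma does not need $b_1\ge 0$, but your closure $E_{k+1}\le\max(E_k,E_{k-1})$ does (or else needs $b_2\le 1$). Instead use $E_{k+1}\le q^k(1+\xi)E_0$ with $E_0\le(\delta_0/4)^2$, as the paper does. That route still requires checking $1+\xi\le 4$, which the paper asserts but does not verify.
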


\begin{proof}	
	We apply the mathematical induction. 
	From Assumption \ref{as:asc}(iv) and Lemma \ref{le:case2}, we have $x_0\in B_{\frac{\delta_0}{4}}(x_*)\subseteq B_{\delta_0}(x_0),x_1\in B_{\frac{\delta_0}{2}}(x_*)\subseteq B_{\delta_0}(x_0)$ and $x_0-x_*,x_1-x_*\in \ker(F'(x_*))^{\perp}$.
	Accordingly, we may assume without loss of generality that $x_j \in B_{\frac{\delta_0}{2}}(x_*) \subseteq B_{\delta_0}(x_0),\, x_j - x_* \in \ker(F'(x_*))^\perp$ hold for all $j\in \{0,1,\cdots,k\}$. It remains 
	to show that \( x_{k+1} \in B_{\frac{\delta_0}{2}}(x_*) \subseteq B_{\delta_0}(x_0) \) and \( x_{k+1} - x_* \in \ker(F'(x_*))^\perp \).
	It follows from \eqref{eq:est} that 
	\begin{equation}\nonumber
		\begin{aligned}
			\|x_{k+1}-x_*\|_2^2
			&\leq (1+3\beta_k+2\beta_k^2)\|x_k-x_*\|_2^2 + (2\beta_k^2+\beta_k)\|x_{k-1}-x_*\|^2_2\\
			&-(1-2\eta+\cot^2\theta_k)\frac{\left\|F_{\mathcal{J}_k}(x_k)\right\|_2^2}{\sigma^2_{\max}(F_{\mathcal{J}_k}^{\prime}(x_k))\sin^2\theta_k}.
		\end{aligned}
	\end{equation}
	For the last term on the right-hand side, we can rewrite and estimate as follows:
	\begin{equation}\nonumber
		\begin{aligned}
			&-(1-2\eta+\cot^2\theta_k)\frac{\left\|F_{\mathcal{J}_k}(x_k)\right\|_2^2}{\sigma^2_{\max}(F_{\mathcal{J}_k}^{\prime}(x_k))\sin^2\theta_k}\\
			=&-(1-2\eta+\cot^2\theta_k)\frac{\left\|F_{\mathcal{J}_k}(x_k)-F_{\mathcal{J}_k}(x_*)\right\|_2^2}{\sigma^2_{\max}(F_{\mathcal{J}_k}^{\prime}(x_k))\sin^2\theta_k}\\
			\leq& -\frac{(1-2\eta+\cot^2\theta_k)}{(1+\eta^2)\sigma^2_{\max}(F_{\mathcal{J}_k}^{\prime}(x_k))\sin^2\theta_k}\left\|F^{\prime}_{\mathcal{J}_k}(x_k)(x_k-x_*)\right\|^2_2 \\
			\leq& -\frac{(1-2\eta+\cot^2\theta_k)}{(1+\eta^2)\kappa^2(F_{\mathcal{J}_k}^{\prime}(x_k))\sin^2\theta_k}\left\|x_k-x_*\right\|^2_2,
		\end{aligned}
	\end{equation}
	where we have used Lemma \ref{le:cy} in the first inequality and Lemma \ref{le:mvs} in the second respectively.  
	This implies 
	\begin{equation}\nonumber
		\begin{aligned}
			\|x_{k+1}-x_*\|_2^2
			&\leq (1+3\beta_k+2\beta_k^2-\frac{(1-2\eta+\cot^2\theta_k)}{(1+\eta^2)\kappa^2(F_{\mathcal{J}_k}^{\prime}(x_k))\sin^2\theta_k})\|x_k-x_*\|_2^2\\
			& \quad+ (2\beta_k^2+\beta_k)\|x_{k-1}-x_*\|^2_2\\
			&\leq b_1\|x_k-x_*\|_2^2+b_2\|x_{k-1}-x_*\|_2^2,
		\end{aligned}
	\end{equation}
	where $b_1=2\beta_{\max}^2+3\beta_{\max}+1-\frac{1-2\eta\delta}{(1+\eta^2)K^2\delta^2}=2\beta_{\max}^2+3\beta_{\max}+1-c$ and $b_2=2\beta_{\max}^2+\beta_{\max}$.

	Noting that $\beta_{\max}< C_2(\eta, \gamma)\leq \frac{\sqrt{1+c}-1}{2}$,
	it is easy to verify that $b_1+b_2 = 4\beta_{\max}^2+4\beta_{\max}+1-c<1$ and $b_2>0$. Therefore, by Lemma \ref{eq: key_le}, we 
	readily derive 
	\begin{equation}\label{eq:main ideal}
		\|x_{k+1}-x_*\|_2^2\leq q^k(1+\xi)\|x_{0}-x_*\|_2^2,
	\end{equation} 
	where $q= \frac {b_{1}+ \sqrt {b_{1}^{2}+ 4b_{2}}}2$, $\xi = q- b_{1}$ and $b_{1}+ b_{2}\leq q< 1.$

	Next, we verify that \( q < d^2 \). In fact, we only need to show that \(d^4 - d^2b_1 - b_2 > 0\). 
	Substituting the expressions for \(b_1\) and \(b_2\), and using the conditions \(\beta_{\max} < C_2(\eta, \gamma)\leq \frac{-(1+3d^2)+\sqrt{(1+3d^2)^2+8(1+d^2)(d^4+d^2c-d^2})}{4(1+d^2)}\) and $K<C_1(\eta, \gamma)$, we can easily get that \(q < d^2\).
	
	On the other hand, we can deduce from \eqref{eq:main ideal} that 
	\begin{equation}
		\|x_{k+1}-x_*\|_2\leq \sqrt{q^k(1+\xi)}\|x_{0}-x_*\|_2\leq 2\|x_{0}-x_*\|_2\leq \frac{\delta_0}{2},
	\end{equation}
	That is to say $x_{k+1}\in B_{\delta_0/2}(x_*) \subseteq B_{\delta_0}(x_0)$. According to the iteration \eqref{eq:abnkam1}, we can write 
	\begin{align}\label{eq:which space}
		x_{k+1}-x_* &=x_k-x_*+\frac{\|x_k-x_{k-1}\|^2_2\|F_{\mathcal{J}_k}(x_k)\|^2_2}{\Delta_k}(F^{\prime}_{\mathcal{J}_k}(x_k))^TF_{\mathcal{J}_k}(x_k)\\
		&+\beta_k(x_k-x_*+x_*-x_{k-1}) \notag.
	\end{align}
	By means of Lemma \ref{le:case2}, \eqref{eq:which space} and $x_j \in B_{\delta_0/2}(x_*), \, x_j - x_* \in \ker(F'(x_*))^\perp$ for all $j\in \{0,1,\cdots,k\} $, we derive 
	\begin{equation}\nonumber
		\begin{aligned}
			x_{k+1}-x_*&\in \ker(F'(x_*))^{\perp}+Im(F'(x_k)^T)+\ker(F'(x_*))^{\perp}\\
			&=\ker(F'(x_*))^{\perp}+\ker(F'(x_k))^\perp\\
			&\subseteq \ker(F'(x_*))^{\perp}.
		\end{aligned}
	\end{equation}
	This completes the proof.
\end{proof}

We easily see from Theorem \ref{th:ideal} that the rate of convergence of ABNKAm (Algorithm \ref{alg:ABNKAm}) 
is given by $q= \frac {b_{1}+ \sqrt {b_{1}^{2}+ 4b_{2}}}2$, 
whereas the rate of convergence of 
the average block nonlinear Kaczmarz method, say the variant ABNK2 with an adaptive step size, 
is given by \cite{xiao2024averaging} $\rho_k=1-\frac{1-2\eta}{(1+\eta^2)\kappa^2(F_{\mathcal{J}_k}^{\prime}(x_k))}$, with 
$\rho_k\geq 1-\frac{1-2\eta}{1+\eta^2}=d$. We know from Theorem \ref{th:ideal} that $q< d^2<d$. 
This concludes that the rate of convergence of the averaged block Kaczmarz method with adaptive momentum 
is strictly smaller than that of the averaged block Kaczmarz method without adaptive momentum.

\subsection{Convergence analysis of Algorithm \ref{alg:ABNKAm2}}
We have derived in the subsection \ref{sec:con 1} the rate of convergence of Algorithm \ref{alg:ABNKAm} 
(cf.\,Theorem \ref{th:ideal}). We are now going to estimate the rate of convergence of 
Algorithm \ref{alg:ABNKAm2}. We first present a very important result for our subsequence analysis. 
\begin{lemma}\label{eq: key_le2}
	Fix \( E_1 = E_0 \geq 0 \) and let \( \{E_k\}_{k \geq 0} \) be a nonnegative real sequence. For each \( k \geq 1 \), the sequence satisfies one of the following two inequalities:
	(i) \( E_{k+1} \leq a_1 E_k + a_2 E_{k-1} \), where \( a_2 \geq 0 \), \( a_1 + a_2 < 1 \) and at least one of the coefficients \( a_1 \), \( a_2 \) is positive;
	(ii) \( E_{k+1} \leq a_3 E_k \), where \(a_3>0\).
	Let 
	\[
	t_{k-1}=\{\text{The  inequality (ii) occurs exactly }  t_{k-1}  \text{ times for any  }k \geq 1\}.
	\]
	Then the sequence \( \{E_k\}_{k \geq 0} \) satisfies the estimate 
	\[
	E_{k+1} \leq q^{k-t_k}(a_3+\xi)^{t_k} (1 + \xi) E_0 \text{ for all } k \geq 1,
	\]
	where $q = \frac{a_1 + \sqrt{a_1^2 + 4a_2}}{2}$ and $\xi = q - a_1 \geq 0$. Moreover, we have
	$q \geq a_1 + a_2$ and the equality holds if and only if $a_2 = 0$ (or $q = a_1$ and $\xi = 0$).
\end{lemma}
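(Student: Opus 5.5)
The plan is to build a two-step Lyapunov quantity that contracts by the factor $q$ under inequality (i) and by the factor $a_3+\xi$ under inequality (ii). This is the same device that underlies Lemma \ref{eq: key_le}. For $k\geq 1$ we set
\[
V_k := E_k + \xi E_{k-1}, \qquad \xi = q - a_1 \geq 0 .
\]
The key algebraic fact is that $q$ is the positive root of $s^2 - a_1 s - a_2 = 0$. Hence $q^2 = a_1 q + a_2$, which gives $a_2 = q(q-a_1) = q\xi$. In particular $q>0$ because at least one of $a_1,a_2$ is positive, and $\xi\ge 0$ because $\sqrt{a_1^2+4a_2}\ge |a_1|$.

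\emph{Step 1 (one-step contraction).} Fix $k\geq 1$ and treat the two alternatives separately.
\begin{itemize}
\item If (i) holds, then
\[
V_{k+1}=E_{k+1}+\xi E_k \le (a_1+\xi)E_k + a_2E_{k-1} = qE_k + q\xi E_{k-1} = qV_k .
\]
\item If (ii) holds, then, using $E_{k-1}\ge 0$ and $\xi\geq 0$,
\[
V_{k+1}\le (a_3+\xi)E_k \le (a_3+\xi)(E_k+\xi E_{k-1}) = (a_3+\xi)V_k .
\]
\end{itemize}

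\emph{Step 2 (iteration).} Among the indices $1,\dots,k$, alternative (ii) is used $t_k$ times and alternative (i) is used $k-t_k$ times. Since both multipliers are nonnegative and the order of the steps does not matter for the product, chaining Step 1 gives
\[
V_{k+1}\le q^{k-t_k}(a_3+\xi)^{t_k}V_1 .
\]
Because $E_1=E_0$, we have $V_1=(1+\xi)E_0$. Finally $E_{k+1}\le V_{k+1}$, since $\xi E_k\ge 0$, and this is the claimed estimate. The only point needing care here is the bookkeeping of $t_k$. It must count the number of (ii)-steps among indices $1,\dots,k$, and I will state this interpretation of the somewhat informal definition of $t_{k-1}$ explicitly.

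\emph{Step 3 (the comparison $q\ge a_1+a_2$).} Let $f(s)=s^2-a_1s-a_2$. Then $f(a_1+a_2)=(a_1+a_2)^2-a_1(a_1+a_2)-a_2=a_2(a_1+a_2-1)\le 0$, because $a_2\ge0$ and $a_1+a_2<1$. Since $q$ is the largest root of the upward parabola $f$, it follows that $q\ge a_1+a_2$. Equality forces $f(a_1+a_2)=0$, that is $a_2=0$ (as $a_1+a_2-1\neq 0$). Conversely, if $a_2=0$ then $q=\frac{a_1+|a_1|}{2}=a_1$ because $a_1>0$ in that case, so $\xi=0$.

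No step is a real obstacle: the identity $a_2=q\xi$ carries the whole argument. The one thing to check is that the (ii)-steps do not break the Lyapunov structure, and this works only because the bound in Step 1 uses the nonnegativity of $E_{k-1}$ and $\xi$.
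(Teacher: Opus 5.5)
Your proof is correct and follows essentially the same route as the paper: the two-step quantity $E_k+\xi E_{k-1}$ contracts by $q$ under (i) via $a_2=q\xi$ (the paper uses it as $a_2\le(a_1+\xi)\xi$) and by $a_3+\xi$ under (ii) via $\xi E_{k-1}\ge 0$, and the chain ends at $E_1+\xi E_0=(1+\xi)E_0$. Your one-step formulation is a cleaner version of the paper's index tracking over $i_1<\dots<i_{t_k}$, and your reading of $t_k$ as the number of (ii)-steps among $1,\dots,k$ is the one the final bound requires; you also prove $q\ge a_1+a_2$ and its equality case directly, which the paper does not re-prove here.
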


\begin{proof}
	We can easily find that \(\xi\) satisfies the following two properties: \(\xi\geq0\) and \(a_{2}\leq(a_{1}+\xi)\xi\). 
	Let the indices corresponding to the \( t_k \) occurrences of Case (ii) be denoted as \( i_1 < \dots < i_{t_k} \). Without loss of generality, we assume \( t_k < k + 1 \). Under these settings, the following relations hold:
	\begin{equation}\label{eq:unroll}
			E_{k+1} +\xi E_k \leq (a_1+\xi)E_k+a_2E_{k-1}\leq (a_1+\xi)(E_k+\xi E_{k-1})=q(E_k+\xi E_{k-1}).
	\end{equation}
	In conjunction with Case (ii) and recurrence relation \eqref{eq:unroll}, the following relations can be derived:
	$E_{k+1}\leq E_{k+1} +\xi E_k $ and
		\begin{align}\label{eq:step1}
			E_{k+1} +\xi E_k \leq q(E_k+\xi E_{k-1}) \leq q^{k-i_{t_k}+1}(E_{i_{t_{k}}}+\xi E_{i_{t_{k}}-1})\leq q^{k-i_{t_k}+1}(a_3+\xi)E_{i_{t_{k}}-1}.
		\end{align}
	
	To further carry out the recurrence for \eqref{eq:step1}, we must examine whether \( i_{t_k} - 1 = i_{t_k - 1} \) holds.  In fact, regardless of which scenario occurs, the following relation always holds:
	\begin{equation}
		\begin{aligned}\label{eq:step2}
			q^{k-i_{t_k}+1}(a_3+\xi)E_{i_{t_{k}}-1}\leq q^{k-i_{t_k-1}}(a_3+\xi)^2(E_{i_{t_{k}-1}-1}+\xi E_{i_{t_{k}-1}-2}).
		\end{aligned}
	\end{equation}
	Indeed, \eqref{eq:step2} stems from the following fact:
	\begin{equation}
		\begin{aligned}\label{eq:step3}
			&q^{k-i_{t_k}+1}(a_3+\xi)E_{i_{t_{k}}-1} \\
			&\leq
			\begin{cases}
				q^{k-i_{t_k-1}}(a_3+\xi)a_3E_{i_{t_{k}-1}-1}, &   i_{t_k} - 1 = i_{t_k - 1};\\
				q^{k-i_{t_k-1}}(a_3+\xi)(E_{i_{t_{k}-1}}+\xi F_{i_{t_{k}-1}-1}), & i_{t_k} - 1 \neq i_{t_k - 1};
			\end{cases}\\
			& \leq 
			\begin{cases}
				q^{k-i_{t_k-1}}(a_3+\xi)^2 E_{i_{t_{k}-1}-1}, &  i_{t_k} - 1 = i_{t_k - 1};\\
				q^{k-i_{t_k-1}}(a_3+\xi)^2E_{i_{t_{k}-1}-1}, & i_{t_k} - 1 \neq i_{t_k - 1};
			\end{cases}\\
			& \leq 
			\begin{cases}
				q^{k-i_{t_k-1}}(a_3+\xi)^2(E_{i_{t_{k}-1}-1}+\xi E_{i_{t_{k}-1}-2}), &  i_{t_k} - 1 = i_{t_k - 1};\\
				q^{k-i_{t_k-1}}(a_3+\xi)^2(E_{i_{t_{k}-1}-1}+\xi E_{i_{t_{k}-1}-2}), & i_{t_k} - 1 \neq i_{t_k - 1}.
			\end{cases}
		\end{aligned}
	\end{equation}
	Thus, by repeating the reasoning from \eqref{eq:unroll} to \eqref{eq:step2}, we can obtain the following relation:
	\begin{equation}\label{eq:key }
		\begin{aligned}
			E_{k+1} \leq q^{k-t_k}(a_3+\xi)^{t_k}(E_{1}+\xi E_{0})\leq q^{k-t_k}(a_3+\xi)^{t_k}(1+\xi)E_0.
		\end{aligned}
	\end{equation}
	This completes the proof.
\end{proof}

Recalling the fact that Algorithm \ref{alg:ABNKAm2} updates the approximate solution using \eqref{eq:abnkam2},  
we next introduce the following convergence result that is needed for our subsequent analysis. It is based on a modified version of Theorem 1 in \cite{tan2024nonlinear} and Theorem 3.2 in \cite{xiao2024averaging}, but tailored for our special purposes here.
\begin{lemma}\cite{tan2024nonlinear, xiao2024averaging}\label{le:case2}
	Suppose  $F(x)$ satisfies Assumption \ref{as:asc} and $\ker(F'(x_*))\subseteq \ker(F'(x))$  for all $x\in B_{\delta_0/2}(x_*)$,
	where $x_*$ is the unique \( x_0 \)-minimum-norm solution of $F(x)=0$. Then 
	
	(i) The iterative sequence \( \{x_k\} \) generated by 
	\[
	x_{k+1} = x_k-\frac{\|F_{\mathcal{J}_k}(x_k)\|^2_2}{\|(F^{\prime}_{\mathcal{J}_k}(x_k))^TF_{\mathcal{J}_k}(x_k)\|^2_2}(F^{\prime}_{\mathcal{J}_k}(x_k))^TF_{\mathcal{J}_k}(x_k)
	\]
	satisfies the inclusion relation
	$x_k \in B_{\delta_0/2}(x_*) \subseteq B_{\delta_0}(x_0) \text{ for all } k \geq 0.$
	
	(ii) If $F(x_k)\neq0$  for some $k\geq0$, we have  $ x_{k+1}-x_*\in \ker(F'(x_*))^\perp$
	and
	\begin{equation}
		\|x_{k+1}-x_*\|_2^2 \leq \left( 1-\frac{1-2\eta}{(1+\eta^2)\kappa^2(F_{\mathcal{J}_k}^{\prime}(x_k))} \right) \|x_k-x_*\|_2^2.
	\end{equation}
\end{lemma}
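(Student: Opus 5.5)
We argue by induction on $k$, proving simultaneously the inclusion in (i), the orthogonality $x_{k}-x_*\in\ker(F'(x_*))^\perp$, and the one-step contraction in (ii). The base case is clear. By Lemma \ref{le:min sol}(ii) and Assumption \ref{as:asc}(iv), $\|x_0-x_*\|_2\le \delta_0/4$, so $x_0\in B_{\delta_0/2}(x_*)$. Since $\|x_*-x_0\|_2\le\delta_0/4$, the triangle inequality gives $B_{\delta_0/2}(x_*)\subseteq B_{3\delta_0/4}(x_0)\subseteq B_{\delta_0}(x_0)$. Finally, $x_0-x_*\in\ker(F'(x_*))^\perp$ is exactly \eqref{eq:sol x0}.

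For the inductive step, assume $x_k\in B_{\delta_0/2}(x_*)$, $x_k-x_*\in\ker(F'(x_*))^\perp$ and $F(x_k)\neq0$. Write $g_k=(F'_{\mathcal{J}_k}(x_k))^TF_{\mathcal{J}_k}(x_k)$ and $t_k=\|F_{\mathcal{J}_k}(x_k)\|_2^2/\|g_k\|_2^2$; by the computability lemma above, $g_k\ne0$. Expanding the square gives
\[
\|x_{k+1}-x_*\|_2^2=\|x_k-x_*\|_2^2-2t_k\langle F_{\mathcal{J}_k}(x_k),F'_{\mathcal{J}_k}(x_k)(x_k-x_*)\rangle+t_k^2\|g_k\|_2^2 .
\]
Since $F_{\mathcal{J}_k}(x_*)=0$, we can write $F'_{\mathcal{J}_k}(x_k)(x_k-x_*)=F_{\mathcal{J}_k}(x_k)-r_k$ with $\|r_k\|_2\le\eta\|F_{\mathcal{J}_k}(x_k)\|_2$ by Lemma \ref{le:cy}. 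Cauchy--Schwarz then bounds the inner product below by $(1-\eta)\|F_{\mathcal{J}_k}(x_k)\|_2^2$. Combining,
\[
\|x_{k+1}-x_*\|_2^2\le\|x_k-x_*\|_2^2-(1-2\eta)\frac{\|F_{\mathcal{J}_k}(x_k)\|_2^4}{\|g_k\|_2^2}.
\]
Next we use $\|g_k\|_2\le\sigma_{\max}\|F_{\mathcal{J}_k}(x_k)\|_2$. The third inequality of Lemma \ref{le:cy} gives $\|F_{\mathcal{J}_k}(x_k)\|_2^2\ge \|F'_{\mathcal{J}_k}(x_k)(x_k-x_*)\|_2^2/(1+\eta^2)$. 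Together these yield a decrease of at least
\[
\frac{1-2\eta}{(1+\eta^2)\sigma_{\max}^2}\,\|F'_{\mathcal{J}_k}(x_k)(x_k-x_*)\|_2^2 .
\]

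The main obstacle is bounding $\|F'_{\mathcal{J}_k}(x_k)(x_k-x_*)\|_2^2\ge\sigma_{\min}^2(F'_{\mathcal{J}_k}(x_k))\|x_k-x_*\|_2^2$, which produces the factor $1/\kappa^2$. This lower bound needs $x_k-x_*$ to lie in the row space of $F'_{\mathcal{J}_k}(x_k)$. From the inductive hypothesis and $\ker(F'(x_*))\subseteq\ker(F'(x_k))$ we only get $x_k-x_*\in\ker(F'(x_*))^\perp\subseteq$ the row space of the full Jacobian. Passing to the $\mathcal{J}_k$ sub-block is exactly what \cite{tan2024nonlinear,xiao2024averaging} handle. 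I would follow their treatment: interpret $\sigma_{\min}$ (and hence $\kappa$) relative to the relevant subspace, or invoke the kernel assumption blockwise. I would flag this as the delicate point, where the rate is stated in terms of $\kappa(F'_{\mathcal{J}_k}(x_k))$ as in those references. Granting it, the contraction factor is $1-\frac{1-2\eta}{(1+\eta^2)\kappa^2}<1$ because $\eta<1/2$. In particular $\|x_{k+1}-x_*\|_2\le\|x_k-x_*\|_2\le\delta_0/2$, giving (i) at step $k+1$.

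Orthogonality is direct. Write $x_{k+1}-x_*=(x_k-x_*)-t_kg_k$. Here $g_k\in\operatorname{range}(F'(x_k)^T)=\ker(F'(x_k))^\perp\subseteq\ker(F'(x_*))^\perp$, where the last inclusion uses $\ker(F'(x_*))\subseteq\ker(F'(x_k))$. Hence $x_{k+1}-x_*\in\ker(F'(x_*))^\perp$. If instead $F(x_k)=0$, the iteration stops and the claims hold trivially. This closes the induction.
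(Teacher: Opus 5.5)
Your expansion, the decrease
$\|x_{k+1}-x_*\|_2^2\le\|x_k-x_*\|_2^2-(1-2\eta)\|F_{\mathcal{J}_k}(x_k)\|_2^4/\|g_k\|_2^2$, the base case, the ball inclusion and the orthogonality argument are all correct. They follow the same chain the paper relies on. The lemma itself is only cited, but the identical steps appear in the proof of Theorem~\ref{th:ideal}, where Lemma~\ref{le:mvs} is applied to $x_k-x_*$ without checking the range condition.

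\textbf{The step you grant is a genuine gap, and it cannot be closed by deferring to the references.} The bound $\|F'_{\mathcal{J}_k}(x_k)(x_k-x_*)\|_2\ge\sigma_{\min}(F'_{\mathcal{J}_k}(x_k))\|x_k-x_*\|_2$ uses $\sigma_{\min}$ as the smallest \emph{nonzero} singular value, as in Lemma~\ref{le:mvs}. It therefore needs $x_k-x_*\in\operatorname{range}(F'_{\mathcal{J}_k}(x_k)^{\mathsf T})$. To deduce this from $x_k-x_*\in\ker(F'(x_*))^\perp$ you would need $\ker F'_{\mathcal{J}_k}(x_k)\subseteq\ker F'(x_*)$. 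A dimension count rules this out whenever $|\mathcal{J}_k|<\operatorname{rank}F'(x_*)$, which is the typical greedy situation. For the same reason, ``measuring $\sigma_{\min}$ on the relevant subspace'' just gives $0$.

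Your fallback for the full Jacobian is also reversed. The hypothesis $\ker F'(x_*)\subseteq\ker F'(x_k)$ yields $\ker(F'(x_k))^\perp\subseteq\ker(F'(x_*))^\perp$, not the inclusion you wrote.

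In fact (ii) is false as stated. Take $F(x)=x$ on $\mathbb{R}^2$ with $\delta_0=5$. Since $F$ is linear, \eqref{eq:local cond} holds for any small $\eta$, and the kernel hypothesis is trivial. Let $x_*=0$, $x_0=(1,\tfrac12)^{\mathsf T}$ and $\theta\in(\tfrac14,1]$. Then $\mathcal{J}_0=\{1\}$, so $F'_{\mathcal{J}_0}=e_1^{\mathsf T}$ has $\kappa=1$. The step gives $x_1=(0,\tfrac12)^{\mathsf T}$, hence $\|x_1-x_*\|_2^2=\tfrac14$. The claimed bound is $\frac{5\eta(2+\eta)}{4(1+\eta^2)}$, which is $<\tfrac14$ for $\eta<0.09$.

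\textbf{The missing idea is to lower-bound the decrease without using the block's own smallest singular value.} From your decrease inequality and $\|g_k\|_2\le\sigma_{\max}(F'_{\mathcal J_k}(x_k))\|F_{\mathcal J_k}(x_k)\|_2$, proceed as follows.
\begin{itemize}
\item[(a)] The greedy rule \eqref{eq:greed} gives $\|F_{\mathcal{J}_k}(x_k)\|_2^2\ge\theta|\mathcal{J}_k|\,\|F(x_k)\|_\infty^2\ge\frac{\theta|\mathcal{J}_k|}{m}\|F(x_k)\|_2^2$.
\item[(b)] Apply \eqref{eq:local cond} with base point $x_*$, i.e.\ with roles $x:=x_*$, $\widetilde{x}:=x_k$. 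This gives $\|F'(x_*)(x_k-x_*)\|_2\le(1+\eta)\|F(x_k)\|_2$.
\item[(c)] Lemma~\ref{le:mvs} now applies legitimately, because $x_k-x_*\in\ker(F'(x_*))^\perp=\operatorname{range}(F'(x_*)^{\mathsf T})$ by your induction.
\end{itemize}
Together these yield
\[
\|x_{k+1}-x_*\|_2^2\le\Bigl(1-\frac{(1-2\eta)\,\theta\,|\mathcal{J}_k|\,\sigma_{\min}^2(F'(x_*))}{(1+\eta)^2\,m\,\sigma_{\max}^2(F'_{\mathcal{J}_k}(x_k))}\Bigr)\|x_k-x_*\|_2^2 .
\]
The stated form with $\kappa(F'_{\mathcal{J}_k}(x_k))$ survives only if $\sigma_{\min}$ is read as the $n$-th singular value. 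Under that reading, $\kappa<\infty$ forces the block to have full column rank.

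A second issue: the constant $1+\eta^2$ you import from Lemma~\ref{le:cy} does not follow from \eqref{eq:local cond}. The cone condition only gives $\|F'_{\mathcal J}(x)(x-\widetilde{x})\|_2\le(1+\eta)\|F_{\mathcal J}(x)-F_{\mathcal J}(\widetilde{x})\|_2$. This is sharp: take $F(x)=x+cx^2$ on $[-r,r]$ with $2cr=\eta$, at $x=r$, $\widetilde{x}=-r$. So $(1+\eta)^2$ must replace $1+\eta^2$.
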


We next present the rate of convergence of the practical variant of ABNKAm (Algorithm \ref{alg:ABNKAm2}), 
which also exhibits exponential convergence.
For the sake of simplicity,	we first introduce some constants.	
Let 
\[
\varepsilon_{1}=\min\Big\{ \frac{\eta(1+\eta^2)+\sqrt{\eta^2(1+\eta^2)^2+(1+\eta^2)(1-2\eta^2-4\eta^3)}}{1-2\eta^2-4\eta^3}, \frac{-\eta+\sqrt{2\eta^2+2\eta}}{\eta^2+2\eta} \Big\}.
\]
 We can check that $\varepsilon_{1}\in(0, 1)$.
For any $\gamma \in(\varepsilon_{1}/2, \varepsilon_{1})$, we define 
$C_3(\eta, \gamma):=\min\left\{ \sqrt{\frac{1-2\eta\gamma+(1-2\eta)\gamma^2}{(1+\eta^2)\gamma^2}}, \sqrt{\frac{(1-2\eta\gamma)(1+\eta^2)}{(1-2\eta^2-4\eta^3)\gamma^2}} \right\}$,
and 
\begin{align*}
	C_4(\eta, \gamma) &:= \min \Big\{ \frac{-(1+3d^2)+\sqrt{(1+3d^2)^2+8(1+d^2)(d^4+d^2c-d^2)}}{4(1+d^2)}, \frac{\sqrt{1+c}-1}{2},\\
	&\quad \frac{\sqrt{(4-3b_3)^2+8(2-b_3)(1-b_3)(c-b_3)}-(4-3b_3)}{4(2-b_3)} \Big\}.
\end{align*}
We can easily verify that \( C_3(\eta, \gamma) > 1 \) and \( C_4(\eta, \gamma) > 0 \).

\begin{theorem}\label{thm:them2}
	Let $F(x)$ satisfy Assumption \ref{as:asc} with $x_*$ being the unique $x_0$-minimum-norm solution of $F(x)=0$. Assume $\ker(F'(x_*)) \subseteq \ker(F'(x))$ for all $x \in B_{\delta_0/2}(x_*)$, and let $\{x_k\}$ be the sequence generated by Algorithm \ref{alg:ABNKAm2}.
	Assume there exists a constant $\gamma \in(\varepsilon_{1}/2, \varepsilon_{1})$ such that the following conditions hold for all \(k\):
	\begin{enumerate}
		\item[(C1)] $\kappa(F'_{\mathcal{J}_k}(x_k)) \leq K$ for some $K \in [1, C_3(\eta, \gamma))$,
		\item[(C2)] $\beta_k \in [0, \beta_{\max})$ for some $\beta_{\max} \in (0, C_4(\eta, \gamma))$,
		\item[(C3)] $\sin^2\theta_k \leq \gamma$,
	\end{enumerate}
	then the following conclusions hold:
	
	(i) $ \{x_k\} \subseteq B_{\delta_0/2}(x_*) \subseteq B_{\delta_0}(x_0) $ for all $k \geq 0$.
	
	(ii) If $F(x_k) \neq 0$ for some $k$, then $x_{k+1} - x_* \in \ker(F'(x_*))^\perp$ and
	\[
	\|x_{k+1} - x_*\|_2^2 \leq Q(k, t_k) \|x_0 - x_*\|_2^2,
	\]
	where $t_k$ is defined as in Lemma \ref{eq: key_le2} and $Q(k, t_k)$ is given by $Q(k, t_k) = q^{k - t_k}(b_3 + \xi)^{t_k}(1 + \xi)$ and
	\begin{align*}
		&q = \frac{b_1 + \sqrt{b_1^2 + 4b_2}}{2},  \xi = q - b_1,  b_1 + b_2 \leq q < d^2 < 1, b_3 + \xi < 1\\
		&b_1 = 1 + 3\beta_{\max} + 2\beta_{\max}^2 - \frac{1 - 2\eta\delta}{(1+\eta^2)K^2\delta^2}, 
		b_2 = 2\beta_{\max}^2 + \beta_{\max},  b_3 = 1 - \frac{1 - 2\eta}{(1+\eta^2)K^2}.
	\end{align*}
	
	(iii) For sufficiently large $k$, the rate of convergence $Q(k, t_k)$ admits the following upper bound estimates depending on 
	the range of $t_k$:
	\begin{itemize}
		\item $Q(k, t_k) \le q$ for $1 \leq t_k \ll k + 1$;
		
		\item $Q(k, t_k) \le \sqrt{q(b_3 + \xi)} < d$ for $1 < t_k < \lfloor (k+1)/2 \rfloor$; 
		
		\item $Q(k, t_k) \le b_3 + \xi$ for $\lfloor (k+1)/2 \rfloor \leq t_k \leq k + 1$. 
		
	\end{itemize}
\end{theorem}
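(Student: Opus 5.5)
The plan is to run the same induction as in the proof of Theorem~\ref{th:ideal}, splitting each step $k$ according to which branch of \eqref{eq:abnkam2} Algorithm~\ref{alg:ABNKAm2} takes. The error sequence is $E_k:=\|x_k-x_*\|_2^2$, and the two branches feed into the two alternatives of Lemma~\ref{eq: key_le2}. The induction hypothesis at step $k$ is:
\begin{itemize}
\item[(a)] $x_j\in B_{\delta_0/2}(x_*)\subseteq B_{\delta_0}(x_0)$ for all $j\le k$;
\item[(b)] $x_j-x_*\in\ker(F'(x_*))^\perp$ for all $j\le k$;
\item[(c)] $E_j\le Q(j-1,t_{j-1})E_0$ for all $j\le k$.
\end{itemize}
The base case $j=0,1$ comes from Assumption~\ref{as:asc}(iv) and Lemma~\ref{le:case2}, exactly as in Theorem~\ref{th:ideal}. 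Here $x_1$ is produced by the pure step \eqref{eq:simple approx sol}, which coincides with the second branch of \eqref{eq:abnkam2}.

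\emph{Momentum branch} ($|\Delta_k|\ge\varepsilon$ and $\beta_k\in(0,\beta_{\max})$). Lemma~\ref{le:estimate1} applies. I would bound its last term as in the proof of Theorem~\ref{th:ideal}: use $F_{\mathcal J_k}(x_*)=0$, Lemma~\ref{le:cy}(iii), Lemma~\ref{le:mvs}, (b), and the kernel inclusion to pass from $\|F'_{\mathcal J_k}(x_k)(x_k-x_*)\|$ to $\sigma_{\min}\|x_k-x_*\|$. Then use (C1) and (C3), noting that $(1-2\eta+\cot^2\theta)/\sin^2\theta=(\sin^2\theta-2\eta\sin^2\theta+\cos^2\theta)/\sin^4\theta$ is decreasing in $\sin^2\theta$, so it is minimized at $\gamma$. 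This gives $E_{k+1}\le b_1E_k+b_2E_{k-1}$, reading the $\delta$ in $b_1$ as $\gamma$.

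\emph{Fallback branch}. Lemma~\ref{le:case2}(ii) together with (C1) gives $E_{k+1}\le b_3E_k$, so case (ii) of Lemma~\ref{eq: key_le2} holds with $a_3=b_3$.

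Membership in $\ker(F'(x_*))^\perp$ in both branches follows from the decomposition \eqref{eq:which space}. Each update direction lies in $\operatorname{Im}F'(x_k)^T=\ker(F'(x_k))^\perp\subseteq\ker(F'(x_*))^\perp$, and the momentum term is a combination of $x_k-x_*$ and $x_{k-1}-x_*$.

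Lemma~\ref{eq: key_le2} then yields $E_{k+1}\le Q(k,t_k)E_0$. To close (a) I need $Q(k,t_k)\le 4$, say, so that $\|x_{k+1}-x_*\|\le 2\|x_0-x_*\|\le\delta_0/2$. This follows once $q<1$, $b_3+\xi<1$ and $1+\xi\le 2$ are established.

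\emph{Constant verification.} This is where the definitions of $\varepsilon_1$, $C_3$ and $C_4$ enter.
\begin{itemize}
\item $b_1+b_2=1+4\beta_{\max}+4\beta_{\max}^2-c<1$ because $\beta_{\max}<(\sqrt{1+c}-1)/2$; here the second entry of $C_3$, inherited from $C_1$, guarantees $c>0$.
\item $q<d^2$ is the same as $d^4-d^2b_1-b_2>0$, which is a quadratic inequality in $\beta_{\max}$ handled by the first entry of $C_4$.
\item $b_3+\xi<1$, i.e.\ $q-b_1<1-b_3$, is the delicate new condition. I would square $\sqrt{b_1^2+4b_2}<2(1-b_3)+b_1$, which is permissible when the right-hand side is positive. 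This gives $b_2<(1-b_3)^2+b_1(1-b_3)$. Substituting $b_1=1+3\beta+2\beta^2-c$ and $b_2=2\beta^2+\beta$ produces a quadratic inequality in $\beta_{\max}$ whose positive root should be the third entry of $C_4$.
\item Positivity of that root requires $c>b_3$. I expect this to be exactly what the first entry of $C_3$ and the second entry of $\varepsilon_1$ are designed to ensure, and I would check it by direct algebra.
\end{itemize}

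\emph{Part (iii).} This is elementary. Write $Q=(1+\xi)q^{k-t_k}(b_3+\xi)^{t_k}$ with both bases in $(0,1)$, and compare exponents in each regime:
\begin{itemize}
\item when $t_k\ll k$, the factor $q^{k-t_k}$ makes $Q\le q$ for large $k$;
\item when $t_k<\lfloor (k+1)/2\rfloor$, group factors in pairs to get $Q\lesssim (q(b_3+\xi))^{k/2}\le\sqrt{q(b_3+\xi)}$, and use $q<d^2$ and $b_3+\xi<1$ to get $\sqrt{q(b_3+\xi)}<d$;
\item when $t_k\ge\lfloor (k+1)/2\rfloor$, bound everything by powers of $b_3+\xi$ (note $q<d^2<1$).
\end{itemize}
In each case the factor $1+\xi$ is absorbed once $k$ is large.

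The main obstacle is the third item of the constant verification: confirming that $b_3+\xi<1$ follows from the stated $C_4$ and that the discriminant and positivity conditions are consistent with $K<C_3$ and $\gamma<\varepsilon_1$.
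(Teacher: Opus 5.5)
Your structure matches the paper's: induction on $E_k=\|x_k-x_*\|_2^2$, the momentum branch giving $E_{k+1}\le b_1E_k+b_2E_{k-1}$, the fallback branch giving $E_{k+1}\le b_3E_k$, then Lemma~\ref{eq: key_le2}. However, the step you single out as the main obstacle is a genuine gap, and it cannot be closed from the stated constants.

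Since $\xi$ is the nonnegative root of $y^2+b_1y-b_2$, the inequality $b_3+\xi<1$ is equivalent to your $b_2<(1-b_3)^2+b_1(1-b_3)$. The paper's proof instead uses $b_2+(1-b_3)b_1-(1-b_3)^2<0$, with the wrong sign on the $b_1$ term. The third entry of $C_4$ is exactly the positive root of that sign-flipped condition, $2(2-b_3)\beta^2+(4-3b_3)\beta-(1-b_3)(c-b_3)<0$. The first entry of $C_3$ (equivalent to $c>b_3$) is what makes that root positive.

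Substituting $b_1,b_2$ into your correct inequality gives instead $2b_3\beta^2+(3b_3-2)\beta-(1-b_3)(2-b_3-c)<0$. Since $b_2\ge0$, this requires $b_1>-(1-b_3)$, i.e.\ the \emph{upper} bound $c<2-b_3+3\beta_{\max}+2\beta_{\max}^2$. Nothing in $K<C_3$ or $\beta_{\max}<C_4$ enforces this: the former only bounds $c$ from below, and the entries of $C_4$ increase with $c$. Meanwhile, $\gamma$ near $\varepsilon_1/2$ and $K$ near $1$ make $c$ large, $b_1$ very negative, and $\xi=q-b_1\approx|b_1|$ large.

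Concretely, take $\eta=0.1$, $\gamma=0.6$, $K=1$, $\beta_{\max}=0.05$. These are admissible under either sign reading of $\varepsilon_0$ (here $C_3\approx1.59$, $C_4\approx0.0512$). Yet $c\approx2.42$, $b_1\approx-1.265$, $b_2=0.055$, $q\approx0.0421<d^2\approx0.0432$, $\xi\approx1.307$, and $b_3+\xi\approx1.52>1$. So $Q(k,t_k)$ grows with $t_k$, and your route to $Q\le4$ for closing (a) disappears.

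A repair requires changing the statement. For example, use the potential $E_k+\xi'E_{k-1}$ with some $\xi'\in(b_2,\min\{1-b_1,1-b_3\})$ instead of $\xi=q-b_1$. Both branches then contract, with factors $\max\{b_1+\xi',b_2/\xi'\}$ and $b_3+\xi'$, at the cost of the extra condition $b_2<1-b_3$ and different constants.

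A few smaller points:

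\begin{itemize}
\item Lemma~\ref{le:mvs} gives $\|F'_{\mathcal{J}_k}(x_k)(x_k-x_*)\|_2\ge\sigma_{\min}\|x_k-x_*\|_2$ only when $x_k-x_*\in\ker(F'_{\mathcal{J}_k}(x_k))^\perp$. Your (b), together with $\ker F'(x_*)\subseteq\ker F'(x_k)\subseteq\ker F'_{\mathcal{J}_k}(x_k)$, yields only $x_k-x_*\in\ker(F'(x_*))^\perp$. That space \emph{contains} $\ker(F'_{\mathcal{J}_k}(x_k))^\perp$, so your justification runs the wrong way. The paper's proof of Theorem~\ref{th:ideal} makes the same unjustified move.
\item The second entry of $C_3$ is not what gives $c>0$; that is automatic, since $2\eta\gamma<1$. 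That entry is equivalent to $c>1-d^2$, which is what makes the first entry of $C_4$ positive.
\item In (iii), your pairing bound tacitly assumes $q\le b_3+\xi$. Once $\max\{q,b_3+\xi\}<1$ is available, $Q(k,t_k)\le(1+\xi)\max\{q,b_3+\xi\}^k\to0$, which makes all three bullets trivial for large $k$. The paper gives no argument for (iii) at all.
\end{itemize}
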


\begin{proof}
	We apply the mathematical induction.
	From Assumption \ref{as:asc}(iv) and Lemma \ref{le:case2}, we have $x_0\in B_{\delta_0/4}(x_*)\subseteq B_{\delta_0}(x_0),x_1\in B_{\delta_0/2}(x_*)\subseteq B_{\delta_0}(x_0)$ and $x_0-x_*,x_1-x_*\in \ker(F'(x_*))^{\perp}$.
	We now assume without loss of generality that $x_j \in B_{\delta_0/2}(x_*) \subseteq B_{\delta_0}(x_0),\, x_j - x_* \in \ker(F'(x_*))^\perp$ for all $j\in \{0,1,\cdots,k\}$ hold. Then it remains to show that \( x_{k+1} \in B_{\delta_0/2}(x_*) \subseteq B_{\delta_0}(x_0) \) and \( x_{k+1} - x_* \in \ker(F'(x_*))^\perp \).
	
	Because the iterative formula \eqref{eq:abnkam2} involves two different situations, namely (i) $\Delta_k \geq\varepsilon$ and $\beta_k\in(0,\beta_{\max})$ and (ii) $\Delta_k <\varepsilon$ or $\beta_k\notin (0,\beta_{\max})$. We will discuss each case separately.
	
	(i) $\Delta_k \geq\varepsilon$ and $\beta_k\in(0,\beta_{\max})$. 
	Using the iterative formula \eqref{eq:abnkam1} at the \( k \)-th step, 
	we follow the proof of Theorem \ref{th:ideal} to deduce 
	\begin{equation}\label{eq:c1}
		\begin{aligned}
			\|x_{k+1}-x_*\|_2^2
			&\leq (1+3\beta_k+2\beta_k^2-\frac{(1-2\eta+\cot^2\theta_k)}{(1+\eta^2)\kappa^2(F_{\mathcal{J}_k}^{\prime}(x_k))\sin^2\theta_k})\|x_k-x_*\|_2^2\\
			& \quad+ (2\beta_k^2+\beta_k)\|x_{k-1}-x_*\|^2_2\\
			&\leq b_1\|x_k-x_*\|_2^2+b_2\|x_{k-1}-x_*\|_2^2,
		\end{aligned}
	\end{equation}
	where $b_1=2\beta_{\max}^2+3\beta_{\max}+1-\frac{1-2\eta\delta}{(1+\eta^2)K^2\delta^2}=2\beta_{\max}^2+3\beta_{\max}+1-c$ and $b_2=2\beta_{\max}^2+\beta_{\max}$. Noting that $0<\beta_{\max}< \frac{\sqrt{1+c}-1}{2}$
	it is easy to verify that $b_1+b_2 = 4\beta_{\max}^2+4\beta_{\max}+1-c<1$ and $b_2>0$. 
	
	(ii) $\Delta_k <\varepsilon$ or $\beta_{k} \notin (0,\beta_{\max})$. 
	In this case, we have 
	\[
	x_{k+1}=x_k-\frac{\|F_{\mathcal{J}_k}(x_k)\|^2_2}{\|F^{\prime}_{\mathcal{J}_k}(x_k)^TF_{\mathcal{J}_k}(x_k)\|^2_2}F^{\prime}_{\mathcal{J}_k}(x_k)^TF_{\mathcal{J}_k}(x_k).
	\]
	Then according to Lemma \ref{le:case2}, we know (with $b_3=1-\frac{1-2\eta}{(1+\eta^2)K^2}$)
	\begin{equation}\label{eq:c2}
		\begin{aligned}
			\|x_{k+1}-x_*\|_2^2
			&\leq (1-\frac{1-2\eta}{(1+\eta^2)\kappa^2(F_{\mathcal{J}_k}^{\prime}(x_k))})\|x_k-x_*\|_2^2\leq b_3\|x_k-x_*\|_2^2\,.
		\end{aligned}
	\end{equation}
	Combining \eqref{eq:c1}, \eqref{eq:c2}, Theorem \ref{th:ideal} and Lemma \ref{eq: key_le2}, we obtain 
	the following estimate:
	\[
	\|x_{k+1}-x_*\|_2^2\leq q^{k-t_k}(b_3+\xi)^{t_k} (1 + \xi) \|x_{0}-x_*\|_2^2,
	\]
	where $q= \frac {b_{1}+ \sqrt {b_{1}^{2}+ 4b_{2}}}2$, $\xi = q- b_{1}$, $b_{1}+ b_{2}\leq q< d^2<1$, and $b_1=1+3\beta_{\max}+2\beta_{max}^2-\frac{1-2\eta\delta}{(1+\eta^2)K^2\delta^2},\,b_2=2\beta_{\max}^2+\beta_{\max}$, $b_3=1-\frac{1-2\eta}{(1+\eta^2)K^2}$.
	
	Next, we verify that \( b_3 + \xi < 1 \). Indeed, we only need to verify that \( b_2+(1-b_3)b_1-(1-b_3)^2<0 \). 
	Substituting the expressions for \(b_1\) and \(b_2\), and using the conditions \( \beta_{\max} < C_4(\eta, \gamma)\leq \frac{-(4-3b_3)+\sqrt{(4-3b_3)^2+8(2-b_3)(1-b_3)(c-b_3)}}{4(2-b_3)} \) and $ K^2 <C_3(\eta, \gamma)^2\leq\frac{1-2\eta\gamma+(1-2\eta)\gamma^2}{(1+\eta^2)\gamma^2}$, we can easily verify that \( b_3 + \xi < 1 \).
	
	On the other hand, we easily see 
	\begin{equation}
		\|x_{k+1}-x_*\|_2\leq \sqrt{q^{k-t_k}(b_3+\xi)^{t_k} (1 + \xi)} \|x_{0}-x_*\|_2\leq \frac{\delta_0}{2}
	\end{equation}
	that is to say $x_{k+1}\in B_{\delta_0/2}(x_*) \subseteq B_{\delta_0}(x_0)$.
	
	Similar to the proof of Theorem \ref{th:ideal}, base on \eqref{eq:abnkam2},  Lemma \ref{le:case2} and $x_j \in B_{\delta_0/2}(x_*) \subseteq B_{\delta_0}(x_0),\ x_j - x_* \in \ker(F'(x_*))^\perp$  for all $j\in \{0,1,\cdots,k\} $, we can derive 
	\begin{equation}\nonumber
		\begin{aligned}
			x_{k+1}-x_*&\in \ker(F'(x_*))^{\perp}+Im(F'(x_k)^T)\\
			&=\ker(F'(x_*))^{\perp}+\ker(F'(x_k))^\perp\\
			&\subseteq \ker(F'(x_*))^{\perp}.
		\end{aligned}
	\end{equation}
	This completes the proof.
\end{proof}

\begin{remark}
	Although both Theorems \ref{th:ideal}-\ref{thm:them2} require \( \beta_{\text{max}} \) to stay 
	in some certain range, we have observed very favorable results by simply 
	setting \( \beta_{\text{max}} = +\infty \) in all our numerical experiments (see Section \ref{sec:ne}).
\end{remark}

According to Theorem \ref{thm:them2}, we know that the rate of convergence of Algorithm \ref{alg:ABNKAm2} depends 
on the concrete values of \( t_k \). If \( t_k \ll  k+1 \), we always have \( \rho < d^2 \). This condition is relatively easy to satisfy, as evidenced by the numerical experiments (see Figure \ref{fig:CHerror}) in Section \ref{sec:ne}. If \( t_k < \lfloor (k+1)/2 \rfloor \), we have  \( \rho\le \sqrt{q(b_3 + \xi)} <d \). 
If \( t_k > \lfloor (k+1)/2 \rfloor \), the upper bound estimate of the rate we have provided is \( b_3 + \xi \).  But at the extreme case (i.e., \( t_k = k+1 \)), the ABNKAm degenerates to the ABNK2, and it is obvious to see that their rates of convergence are the same in this scenario. Therefore, we can always conclude that 
the rate of convergence of the ABNKAm is less than or equal to that of the ABNK2.

\section{Numerical experiments}\label{sec:ne}
In this section, we present several representative numerical examples to verify 
the effectiveness of the proposed ABNKAm method (Algorithm \ref{alg:ABNKAm2}). The computational performance is evaluated in two metrics: the number of iterations (denotes as IT) and the CPU time in seconds (denotes as CPU). To quantitatively assess efficiency improvements, we further define the speed-up ratio as
\begin{equation}\label{eq:su}
	\mathrm{SU}=\frac{\text{CPU of other method}}{\text{CPU of ABNKAm}},
\end{equation}
where other methods include the nonlinear Kaczmarz method and its several variants. All numerical experiments adopt the following termination criteria:
$\|r_k\|_2\leq \tau_a + \tau_r \|r_0\|_2$ proposed in \cite{kelley2003solving}
or the number of iterations exceeds $10^{5}$, where $\tau_a=10^{-6}$ and $\tau_r=10^{-8}$.

The numerical experiments are performed in PYTHON on an AMD Ryzen 9 7945HX with Radeon Graphics computer with 32.0 GB of RAM. CPU time refers to the duration from the initial value until the termination criterion is met for the algorithm. 
In all experiments, we set $\beta_{\max} = +\infty$ and $\varepsilon = 10^{-16}$ in Algorithm \ref{alg:ABNKAm2}. In the tables below, the symbol ``-'' indicates that the maximum number of iterations was exceeded or that memory was insufficient.

We utilize several singular problems to test the proposed methods and conduct comparisons with other methods. Specifically, the proposed methods are compared with:
\begin{itemize}
	\item
	the averaging block nonlinear Kaczmarz method with adaptive stepsize (denoted as ABNK2 \cite{xiao2024averaging})
	\item 
	the residual-based block capped nonlinear Kaczmarz (denoted as RB-CNK \cite{zhang2024greedy})
	\item 
	the maximum residual block nonlinear Kaczmarz (denoted as MRBNK \cite{zhang2023maximum})
	\item
	the greedy randomized Kaczmarz with momentum (denoted as NGRKm \cite{liu2025greedy}) 
	\item
	the maximum residual nonlinear Kaczmarz (denoted as MRNK \cite{zhang2023maximum}) 
	\item
	the nonlinear uniformly randomized Kaczmarz (denoted as NURK \cite{wang2022nonlinear})
\end{itemize}

For both the MRBNK and RB-CNK methods, we employ the LSQR algorithm \cite{paige1982lsqr} to approximate the pseudoinverse computation required in \eqref{eq:mrbnk}. The primary distinction between RB-CNK and MRBNK lies in their greedy selection criteria; detailed explanations can be found in \cite{zhang2024greedy}. It is also worth noting that the ABNK2, MRBNK, and ABNKAm methods all utilize the greedy criterion specified in \eqref{eq:greed}. For the control parameter $\theta$ in \eqref{eq:greed}, we adopt the values from the literature for those benchmark problems; for new and non-benchmark problems, 
$\theta$ was set to 0.5. As for the momentum parameter $\beta$ in the NGRKm method, we select appropriate values to optimize its convergence performance.

\begin{pro}\label{P1}
	The modified Rosenbrock problem \cite{friedlander1997solving}:
	\begin{equation}\label{p1}
		\begin{cases}
			&F_{k}(x)  =\frac{1}{1+\exp(-x_{k})}-0.73, \quad\mathrm{mod}(k,2)=1, \notag\\
			&F_{k}(x) =10(x_{k}-x_{k-1}^2), \quad\mathrm{mod}(k,2)=0,\quad 
			k=1,\dots, n. 
		\end{cases}  
	\end{equation}
	The initial value $x_0=(x^1,\ldots,x^n)\in\mathbb{R}^n$ was used in this experiment, where $x^{k}=-1.8$ for $\mathrm{mod}(k,2)=1$ and $x^{k}=-1$ for  $\mathrm{mod}(k,2)=0$. 
\end{pro}

\begin{table}[htbp]
	\centering
	\begin{threeparttable}  
		\caption{IT and CPU of different methods for Modified Rosenbrock Problem}
		\label{tab:Modified Rosenbrock}
		\setlength{\tabcolsep}{2.5pt}  
		\begin{tabular}{@{}llccccccc@{}}
			\toprule
			\multirow{2}{*}{m} & \multirow{2}{*}{Metric} & \multicolumn{7}{c}{Algorithms} \\ 
			\cmidrule(lr){3-9}
			& & \multicolumn{1}{c}{ABNKAm} & \multicolumn{1}{c}{ABNK2} & \multicolumn{1}{c}{RB-CNK}  & \multicolumn{1}{c}{MRBNK} & \multicolumn{1}{c}{NGRKm} & \multicolumn{1}{c}{MRNK} & \multicolumn{1}{c}{NURK} \\ 
			\midrule
			\multirow{3}{*}{$1\cdot10^3$} & IT & 9 & 2913 & 124  & 88 & 61567 & 61736 & -\\ 
			& CPU & 0.0162 & 7.4616 & 0.2023 & 0.1593 & 12.265 & 10.306 & - \\ 
			&$\mathrm{SU}$ &  & 460.6 & 12.5  & 9.8 & 757.1 & 636.2 & - \\ 
			\hline
			\multirow{3}{*}{$1\cdot10^4$} & IT & 9 & 2913 & 124  & 90 & - & - & -  \\ 
			& CPU & 0.1322 & 85.795 &  1.9305  & 1.3946 & - & - & - \\ 
			&$\mathrm{SU}$ &  & 649.0 & 14.6 & 10.5 & - & - & -  \\ 
			\hline
			\multirow{3}{*}{$1\cdot10^5$} & IT & 9 & 2913 & 126 & 90 & - & - & -\\ 
			& CPU & 1.1881 & 847.16 & 21.354 & 15.211 & - & - & - \\ 
			&$\mathrm{SU}$ &  & 713.0 & 18.0 & 12.8 & - & - & - \\ 
			\hline
			\multirow{3}{*}{$1\cdot10^6$} & IT & 9 & - & 126 & 90 & - & - & -\\ 
			& CPU & 9.6079 & -  & 210.52 & 153.67 & - & - & - \\ 
			&$\mathrm{SU}$ &  & - & 21.9 & 16.0 & - & - & - \\ 
			\bottomrule
		\end{tabular}
	\end{threeparttable} 
\end{table}

\begin{figure}[htbp]
	\centering
	\begin{minipage}[t]{0.45\textwidth}
		\includegraphics[width=\textwidth, height=4.5cm]{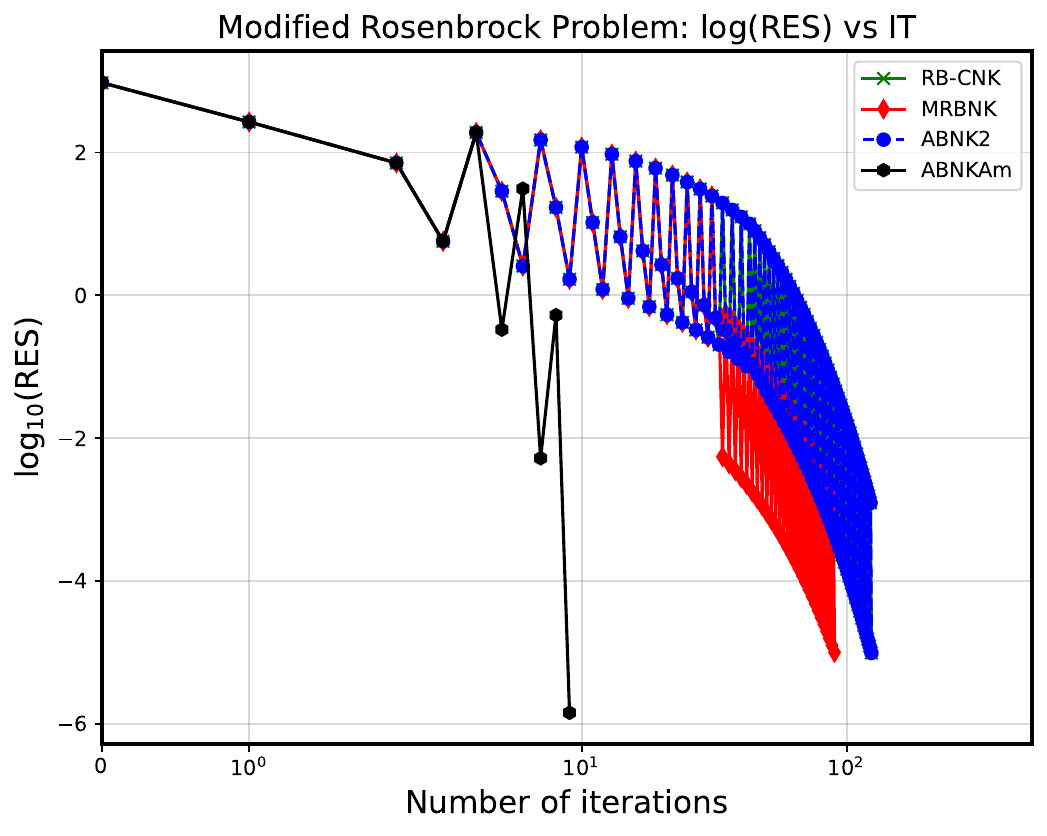}
	\end{minipage}
	\hfill
	\begin{minipage}[t]{0.45\textwidth}
		\includegraphics[width=\textwidth, height=4.5cm]{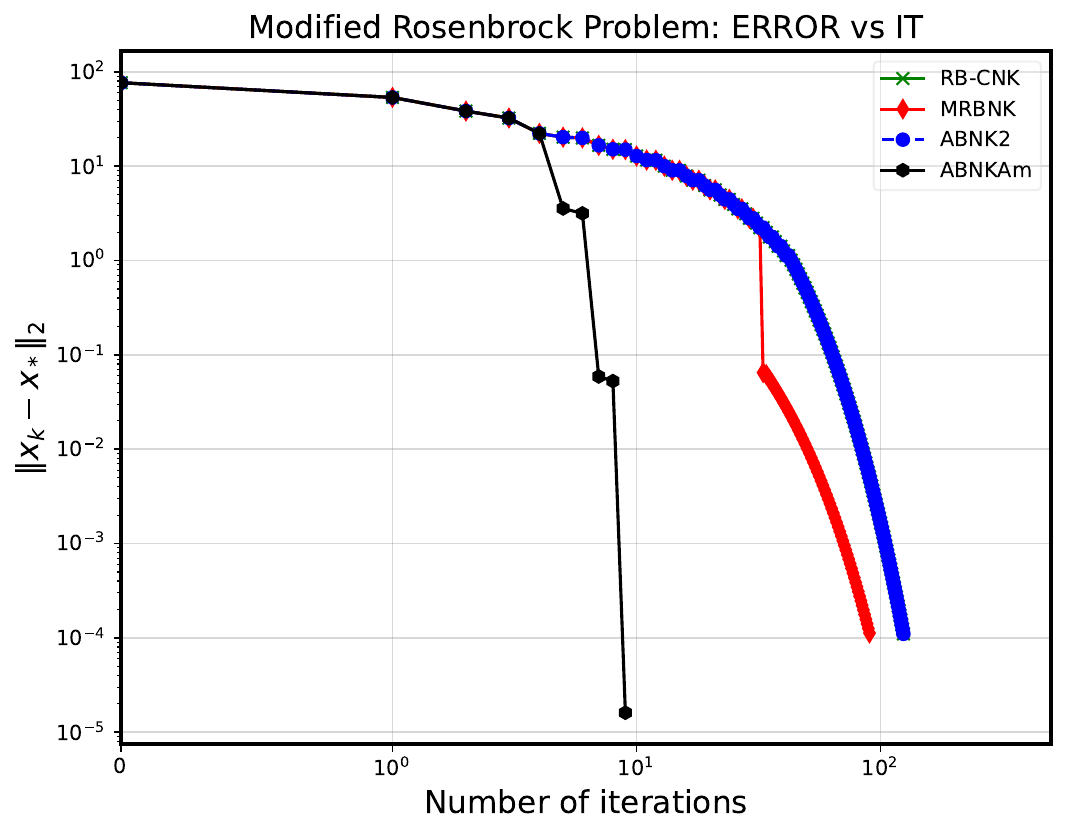}
	\end{minipage}
	\caption{Curves of ERROR and RES against IT for $m=1000$, Modified Rosenbrock Problem}
	\label{fig:MR1}
\end{figure}

Table \ref{tab:Modified Rosenbrock} provides a quantitative comparison of the performance of different methods across various problem scales (with $m=10^3, 10^4, 10^5, 10^6$), including iteration count (IT), CPU time, and speed-up ratio (SU). 
The results demonstrate that the proposed ABNKAm method outperforms all competing approaches in both CPU time 
and iteration count.
We can clearly observe that ABNK2 (scheme \eqref{eq:abnk}) can reduce per-iteration cost compared to the pseudoinverse-based MRBNK (scheme \eqref{eq:mrbnk}), this reduction fails to compensate for the increased number of iterations 
required. 
In contrast, ABNKAm achieves significant improvement over MRBNK in both iteration count and CPU time.
Furthermore, as shown in Table \ref{tab:Modified Rosenbrock}, the speed-up ratio of ABNKAm 
relative to other approaches continues to grow with problem scales, highlighting its superior scalability.

Figure \ref{fig:MR1} (left) reveals oscillatory behavior in the residual curves of all methods. This phenomenon can be attributed to the "seesaw effect" \cite{li2025global}: when an index set $\mathcal{J}_k$ with larger residuals is selected via the greedy criterion \eqref{eq:greed} and processed in the current iteration, the residuals of previously smaller components may increase. Mitigating this effect is crucial for improving the convergence behavior of Kaczmarz-type methods. The proposed ABNKAm method partially addresses this issue.
Moreover, although Kaczmarz-type methods may exhibit intermittent residual increases, the sequence of approximate solutions consistently approaches the true solution $x_*$ (taken as the high-precision numerical solution obtained by the ABNKAm method), as evidenced in Figure \ref{fig:MR1} (right).

\begin{pro} \label{P2}
	The extended Cragg levy problem \cite{lukvsan1994inexact}:
	\begin{equation}
		\begin{cases}
			&F_{k}(x)  = (\exp(x_k)-x_{k+1})^2, \quad\mathrm{mod}(k,4)=1, \notag\\
			&F_{k}(x) =10(x_{k}-x_{k+1})^3, \quad\mathrm{mod}(k,4)=2, \notag\\
			&F_{k}(x) =\tan^2(x_k-x_{k+1}), \quad\mathrm{mod}(k,4)=3,\notag\\
			&F_{k}(x) = x_k-1, \quad\mathrm{mod}(k,4)=0,
			\quad 
			k=1,\dots, n. 
		\end{cases}  
	\end{equation}
	The initial value $x_0=(x^1,\ldots,x^n)\in \mathbb{R}^n$ is set to 
	$x^{k}=1$ for $\mathrm{mod}(k,4)=1$, and $x^{k}=2$ for  $\mathrm{mod}(k,4)\neq 0$. 
\end{pro}

Numerical results for this example are summarized in Table \ref{tab:Extended Cragg Levy Problem}. As shown in the table, row-action methods such as NGRKm, MRNK, and NURK fail to converge, whereas all block-oriented methods successfully reach the prescribed tolerance. Among the convergent methods, the proposed ABNKAm achieves significantly fewer iterations and less 
computational time than its counterparts.

Notably, we can see that ABNK2 (scheme \eqref{eq:abnk}) performs worse than MRBNK 
but better than RB-CNK in both iteration count and CPU time. 
Although ABNK2 reduces per-iteration cost, the resulting increase in iterations leads to longer total runtime 
compared to MRBNK. The superior performance of MRBNK over RB-CNK further confirms the effectiveness 
of the greedy criterion \eqref{eq:greed}.
In contrast, the proposed ABNKAm method converges faster than all other block methods. 
For a problem size of $10^6$, it achieves speed-up factors of 3.0, 3.4, and 2.7 over ABNK2, RB-CNK, and MRBNK, respectively.

\begin{table}[h]
	\centering
	\begin{threeparttable}
		\caption{IT and CPU of different methods for Extended Cragg Levy Problem}
		\label{tab:Extended Cragg Levy Problem}
		\setlength{\tabcolsep}{2.5pt}  
		\begin{tabular}{@{}llccccccc@{}}
			\toprule
			\multirow{2}{*}{m} & \multirow{2}{*}{Metric} & \multicolumn{7}{c}{Algorithms} \\ 
			\cmidrule(lr){3-9}
			& & \multicolumn{1}{c}{ABNKAm} & \multicolumn{1}{c}{ABNK2} & \multicolumn{1}{c}{RB-CNK} & \multicolumn{1}{c}{MRBNK} & \multicolumn{1}{c}{NGRKm} & \multicolumn{1}{c}{MRNK} & \multicolumn{1}{c}{NURK} \\ 
			\midrule
			\multirow{3}{*}{$1\cdot10^3$} & IT & 169 & 322 & 399 & 288& - & - & -  \\ 
			& CPU & 0.2066 & 0.5131 & 0.6483 & 0.5018 & - & - & -\\ 
			&$\mathrm{SU}$ &  & 2.5 & 3.1 & 2.4 &-& - & -\\ 
			
			\hline
			\multirow{3}{*}{$1\cdot10^4$} & IT & 184 & 350 & 428 & 313 & - & - & - \\ 
			& CPU & 1.9546 & 5.0999 & 5.9925 & 4.8753 & - & - & - 
			\\ 
			&$\mathrm{SU}$ &  & 2.6 & 3.1 & 2.5 & - & - & - \\ 		
			\hline
			\multirow{3}{*}{$1\cdot10^5$} & IT & 186 & 375 & 460 & 330 & - & - & - \\ 
			& CPU & 18.212 & 51.338 & 59.282 & 46.209 & - & - & -\\ 
			&$\mathrm{SU}$ &  & 2.8 & 3.3 & 2.5 & - & - & - \\
			\hline
			\multirow{3}{*}{$1\cdot10^6$} & IT & 185 & 386 & 470 & 332 & - & - & - \\ 
			& CPU & 160.58 & 489.14 & 539.66 & 436.98 & - & - & -\\ 
			&$\mathrm{SU}$ &  & 3.0 & 3.4 & 2.7 & - & - & - \\
			\bottomrule
		\end{tabular}
	\end{threeparttable}
\end{table}

Next, we will test some nonsingular problems to verify 
the performance of the proposed methods,  in comparison with other methods. Since the row methods are significantly less efficient than the block methods, we only show the results with the NGRKm method.

\begin{pro}\label{P3}
	The Chandrasekhar H-equation \cite{babajee2010analysis}:
	\begin{equation}
		F(H, c)(u)=H(u)- \Big( 1-\frac{c}{2}\int_0^1\frac{u H(v)dv}{u+v} \Big)^{-1}=0.
	\end{equation}
	This problem originates from radiative transfer theory, where the integrals are discretized via the composite midpoint rule:
	$ \int_0^1f(t)dt\approx\frac{1}{m}\sum_{j=1}^mf(t_j)$,
	where $t_i=(i-\frac{1}{2})/m$ for $1\leq i\leq m$, and $m$ denotes the number of integration nodes. Consequently, the following discrete problem is solved:
	\begin{equation}\label{eq:dis}
		F_i(u, c)=u_i- \Big(1-\frac{c}{2m}\sum_{j=1}^m\frac{t_iu_j}{t_i+t_j} \Big)^{-1},
	\end{equation}
	we set $c=0.9$, then the discrete problem \eqref{eq:dis} yields a nonlinear systems of equations, where $m$ represents the number of nonlinear equations.
\end{pro}

\begin{figure}[htbp]
	\centering
	
	\begin{minipage}[t]{0.45\textwidth}
		\includegraphics[width=\textwidth, height=4.5cm]{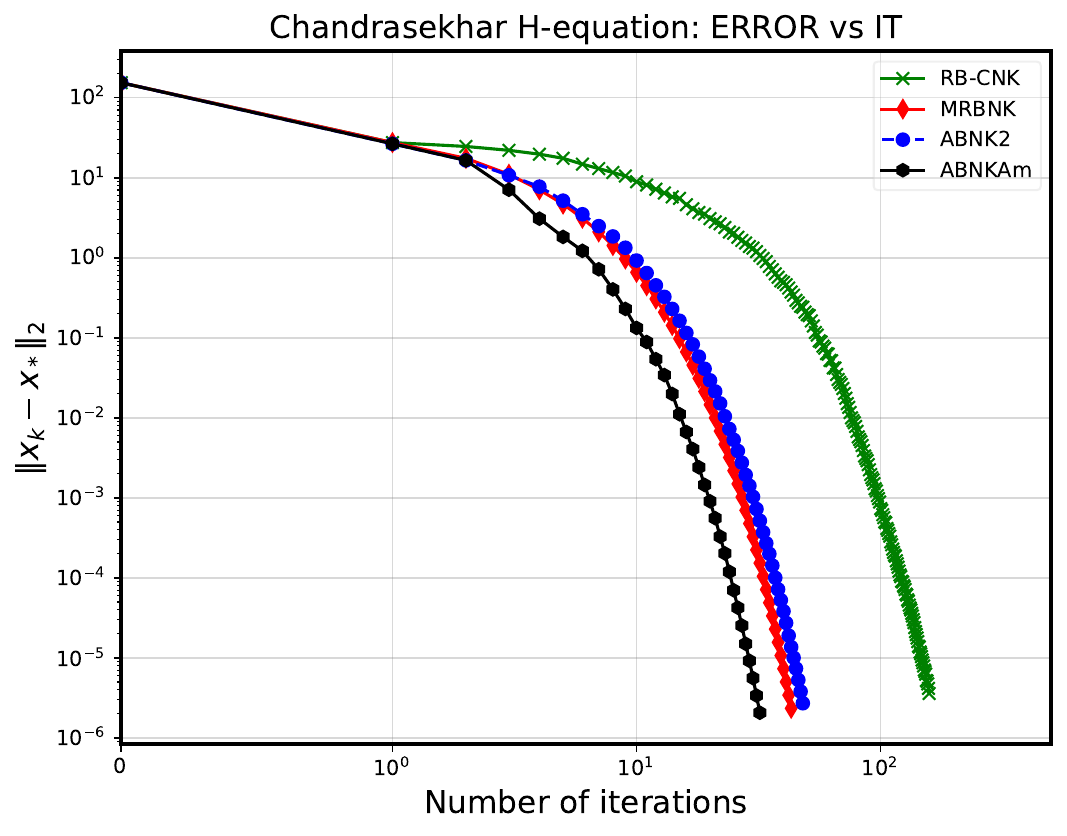}
	\end{minipage}
	\hfill %
	\begin{minipage}[t]{0.45\textwidth}
		\includegraphics[width=\textwidth, height=4.5cm]{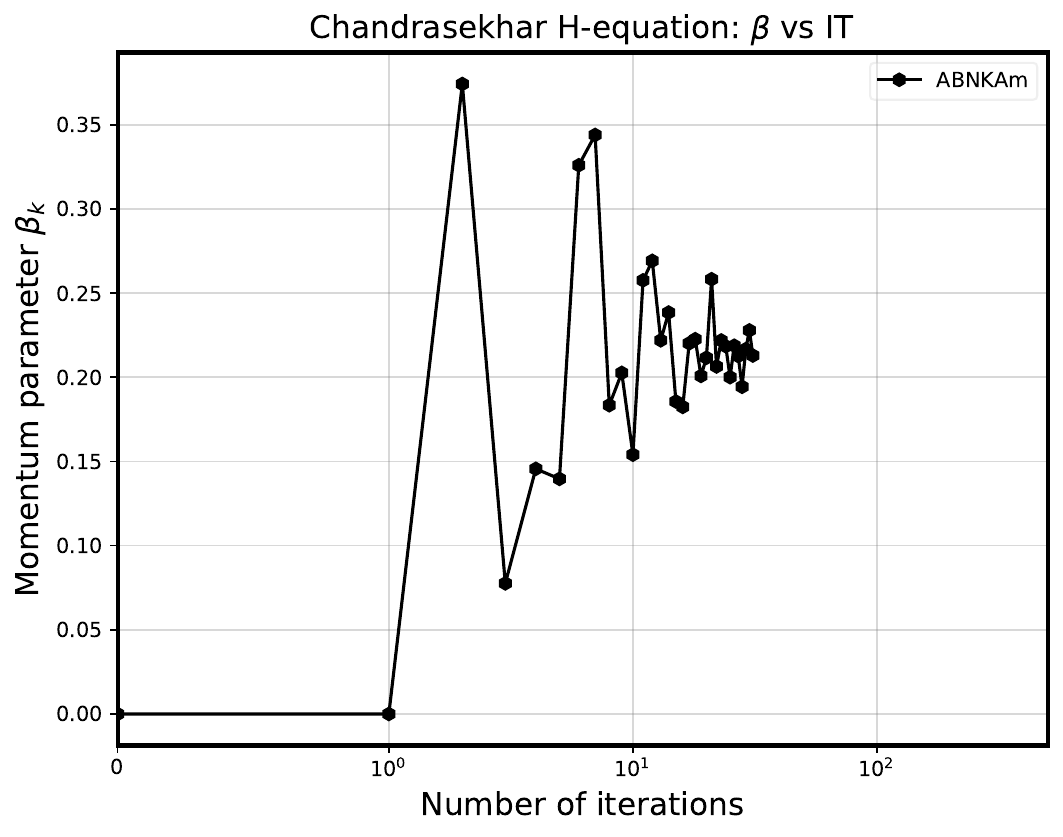}
	\end{minipage}
	\caption{Chandrasekhar H-equation ($m=10^4$). 
		Left: Curve of error versus IT; 
		Right; Curve of momentum parameter versus IT.} 
	\label{fig:CHerror}
\end{figure}

\begin{table}[h]
	\centering
	\begin{threeparttable}
		\caption{IT and CPU of different methods for Chandrasekhar H-equation}
		\label{tab:H-equation}
		\setlength{\tabcolsep}{3.5pt}  
		\begin{tabular}{@{}llccccc@{}}
			\toprule
			\multirow{2}{*}{m} & \multirow{2}{*}{Metric} & \multicolumn{5}{c}{Algorithms} \\ 
			\cmidrule(lr){3-7}
			& & \multicolumn{1}{c}{ABNKAm} & \multicolumn{1}{c}{ABNK2} & \multicolumn{1}{c}{RB-CNK} & \multicolumn{1}{c}{MRBNK} & \multicolumn{1}{c}{NGRKm}  \\ 
			\midrule
			\multirow{3}{*}{$1\cdot10^3$} & IT & 30 & 45 & 155 & 41 & 16598 \\ 
			& CPU & 0.7089 & 1.1037 & 6.3928 & 4.2516 & 239.65\\ 
			&$\mathrm{SU}$ &  &  1.6 & 9.0 & 6.0 & 338.1  \\ 
			
			\hline
			\multirow{3}{*}{$5\cdot10^3$} & IT & 31 & 47 & 147 & 47 & 85595 \\ 
			& CPU & 5.9860 & 9.3224 & 56.881 & 36.042 & 10211   \\ 
			&$\mathrm{SU}$ &  & 1.6 & 9.5 & 6.0 & 1705.8  \\  
			
			\hline
			\multirow{3}{*}{$1\cdot10^4$} & IT & 32 & 48 & 158 & 43 & - \\ 
			& CPU & 14.822 & 23.373 & 146.78 & 92.940 & - \\ 
			&$\mathrm{SU}$ &  & 1.6 & 10.0 & 6.3 & -   \\ 
			
			\hline
			\multirow{3}{*}{$5\cdot10^4$} & IT & 33 & 48 & 169 & 43 & -  \\ 
			& CPU & 293.03 & 497.30 & 3127.8 & 1874.4 & -  \\ 
			&$\mathrm{SU}$ &  & 1.7 & 10.7 & 6.4 & -  \\  
			
			\hline
			\multirow{3}{*}{$1\cdot10^5$} & IT & 33 & 49 & 167 & 43 & - \\ 
			& CPU & 1182.3 & 2213.5 & 12713 & 7984.9 & - \\ 
			&$\mathrm{SU}$ &  & 1.9 & 10.8 & 6.8 & -  \\ 
			
			\bottomrule
		\end{tabular}
	\end{threeparttable}  
\end{table}

The initial value is set to $x_0 = (0, \dots, 0) \in \mathbb{R}^n$, and numerical results are presented in Table \ref{tab:H-equation}. The proposed ABNKAm method outperforms all other methods in both iteration count and CPU time. Although the problem structure is simple, its dense Jacobian matrix leads to high computational and memory costs.

As shown in Table \ref{tab:H-equation}, the row-action method NGRKm requires substantially more computational time than block-oriented methods, even with momentum acceleration. Among block methods, those avoiding pseudoinverse computations (ABNK2) are notably faster than pseudoinverse-based approaches, confirming the advantage of pseudoinverse-free schemes for problems with dense Jacobians.
Furthermore, the proposed ABNKAm converges faster than the momentum-free ABNK2 method in both iteration and runtime. The speed-up ratio (SU) improves with problem scales. At a scale of $10^5$, ABNKAm achieves speed-up factors of approximately 2.0 over ABNK2 and 7.0 over MRBNK.

Figure \ref{fig:CHerror} provides additional insight into the convergence behavior. The per-iteration progress, measured by $\|x_k - x_*\|_2$, is significantly larger for ABNKAm than for other block Kaczmarz methods, aligning with our theoretical analysis in Section \ref{sec:convergence}. Notably, while ABNKAm and ABNK2 behave identically in the first two iterations (when $\beta = 0$), ABNKAm shows markedly greater progress per iteration from the third iteration onward (with $\beta \neq 0$), clearly demonstrating the acceleration effect of the momentum term.

\begin{pro}\label{P4}
	The augmented Rosenbrock problem \cite{friedlander1997solving}:
	\begin{equation}\label{p:ar}
		\begin{cases}
			&F_{k}(x)  = 100(x_{k+1}-x_{k}^2), \quad\mathrm{mod}(k,4)=1, \notag\\
			&F_{k}(x) =1-4x_{k-1}, \quad\mathrm{mod}(k,4)=2, \notag\\
			&F_{k}(x) =1.25x_k-0.25x_k^3, \quad\mathrm{mod}(k,4)=3,\notag\\
			&F_{k}(x) = x_k, \quad\mathrm{mod}(k,4)=0,
			\quad 
			k=1,\dots, n. 
		\end{cases}  
	\end{equation}
	The initial guess $x_0=(x^1,\ldots,x^n)\in \mathbb{R}^n$ is set to $x^{k}=-1.2$ for $\mathrm{mod}(k,4)=1$, $x^{k}=1$ for  $\mathrm{mod}(k,4)=2$, $x^{k}=-1$ for  $\mathrm{mod}(k,4)=3$ and $x^{k}=20$ for  $\mathrm{mod}(k,4)=0$. 
\end{pro}

\begin{table}[h]
	\centering
	\begin{threeparttable}
		\caption{IT and CPU of different methods for the augmented Rosenbrock problem}
		\label{tab:ar}
		\setlength{\tabcolsep}{3.5pt} 
		\begin{tabular}{@{}llccccc@{}}
			\toprule
			\multirow{2}{*}{m} & \multirow{2}{*}{Metric} & \multicolumn{5}{c}{Algorithms} \\ 
			\cmidrule(lr){3-7}
			& & \multicolumn{1}{c}{ABNKAm}  & \multicolumn{1}{c}{ABNK2} & \multicolumn{1}{c}{RB-CNK} & \multicolumn{1}{c}{MRBNK} & \multicolumn{1}{c}{NGRKm}  \\ 
			\midrule
			\multirow{3}{*}{$10^3$} & IT & 24 & - & 
			- & 104 & 99046 \\ 
			& CPU & 0.0160  & - & - & 0.0881 & 15.729 \\ 
			&$\mathrm{SU}$ &   & - & - & 5.5 & 983.1\\ 
			\hline
			\multirow{3}{*}{$10^4$} & IT & 24 & - & 
			83 & 120 & -\\ 
			& CPU & 0.1502  & - & 0.5672 & 0.8724 & -  \\ 
			&$\mathrm{SU}$ &   &- & 3.8 & 5.8 & - \\ 
			\hline
			\multirow{3}{*}{$10^5$} & IT & 24 & - & 
			82 & 105 & -\\ 
			& CPU & 1.4908  & - & 5.7582 & 8.7826 & -\\ 
			&$\mathrm{SU}$ &   & - & 3.9 & 5.9 & -\\ 
			\hline
			\multirow{3}{*}{$10^6$} & IT & 24 & - & 
			83 & 102 & -\\ 
			& CPU & 14.382  & - & 60.528 & 87.384 & -\\ 
			&$\mathrm{SU}$ &   & - & 4.2 & 6.1 & -\\ 
			\bottomrule
		\end{tabular}
	\end{threeparttable}  
\end{table}

The numerical results are summarized in Table \ref{tab:ar}. As shown, the proposed ABNKAm method significantly outperforms other Kaczmarz-type methods in both iteration count and CPU time. Specifically, as the problem size increases from $10^3$ to $10^6$, ABNKAm consistently converges in only 24 iterations. This performance is substantially better than that of the fastest competing method, RB-CNK, which requires approximately 83 iterations at the $10^6$ scale, yielding a speed-up factor (SU) of 4.2 for ABNKAm.

The divergence of ABNK2 further underscores the crucial role of the momentum effect in ensuring convergence. Meanwhile, the row-based NGRKm method converges only at the smallest scale ($10^3$), and even then requires nearly $10^5$ iterations.

\begin{pro}\label{P5}
	The extended Powell badly scaled problem \cite{lukvsan1994inexact}:
	\begin{equation}
		\begin{cases}
			&F_{k}(x) =10000x_kx_{k+1}-1, \quad\mathrm{mod}(k,2)=1,\notag\\
			&F_{k}(x) = \exp(-x_{k-1})+\exp(-x_k)-1.0001, \quad\mathrm{mod}(k,2)=0,
			\quad 
			k=1,\dots, n. 
		\end{cases}  
	\end{equation}
	The initial value $x_0=(x^1,\ldots,x^n)\in \mathbb{R}^n$ is set to 
	$x^{k}=0$ for $\mathrm{mod}(k,2)=1$ and $x^{k}=1$ for  $\mathrm{mod}(k,2)=0$. 
\end{pro}

\begin{table}[htbp]
	\centering
	\begin{threeparttable}
		\caption{IT and CPU of different methods for the xtended Powell badly scaled problem}
		\label{tab:Extended Powell Badly Scaled Problem}
		\setlength{\tabcolsep}{3.5pt} 
		\begin{tabular}{@{}llccccc@{}}
			\toprule
			\multirow{2}{*}{m} & \multirow{2}{*}{Metric} & \multicolumn{5}{c}{Algorithms} \\ 
			\cmidrule(lr){3-7}
			& & \multicolumn{1}{c}{ABNKAm} & \multicolumn{1}{c}{ABNK2} & \multicolumn{1}{c}{RB-CNK} & \multicolumn{1}{c}{MRBNK} & \multicolumn{1}{c}{NGRKm}  \\ 
			\midrule
			\multirow{2}{*}{$1\cdot10^3$} & IT & 25 & - & - & - & - \\ 
			& CPU & 0.0357 & - & - & - & - \\ 
			
			\hline
			\multirow{2}{*}{$1\cdot10^4$} & IT & 28 & - & - & - & -  \\ 
			& CPU & 0.3369 & - & - & - & -  \\
			
			\hline
			\multirow{2}{*}{$1\cdot10^5$} & IT & 28 & - & - & - & - \\ 
			& CPU & 3.8527 & - & - & - & - \\ 
			
			\hline
			\multirow{2}{*}{$1\cdot10^6$} & IT & 28 & - & - & - & - \\ 
			& CPU & 35.097 & - & - & - & - \\ 
			\bottomrule
		\end{tabular}
	\end{threeparttable} 
\end{table}
\begin{figure}[htbp]
	\centering
	\begin{minipage}[t]{0.45\textwidth}
		\includegraphics[width=\textwidth, height=4.5cm]{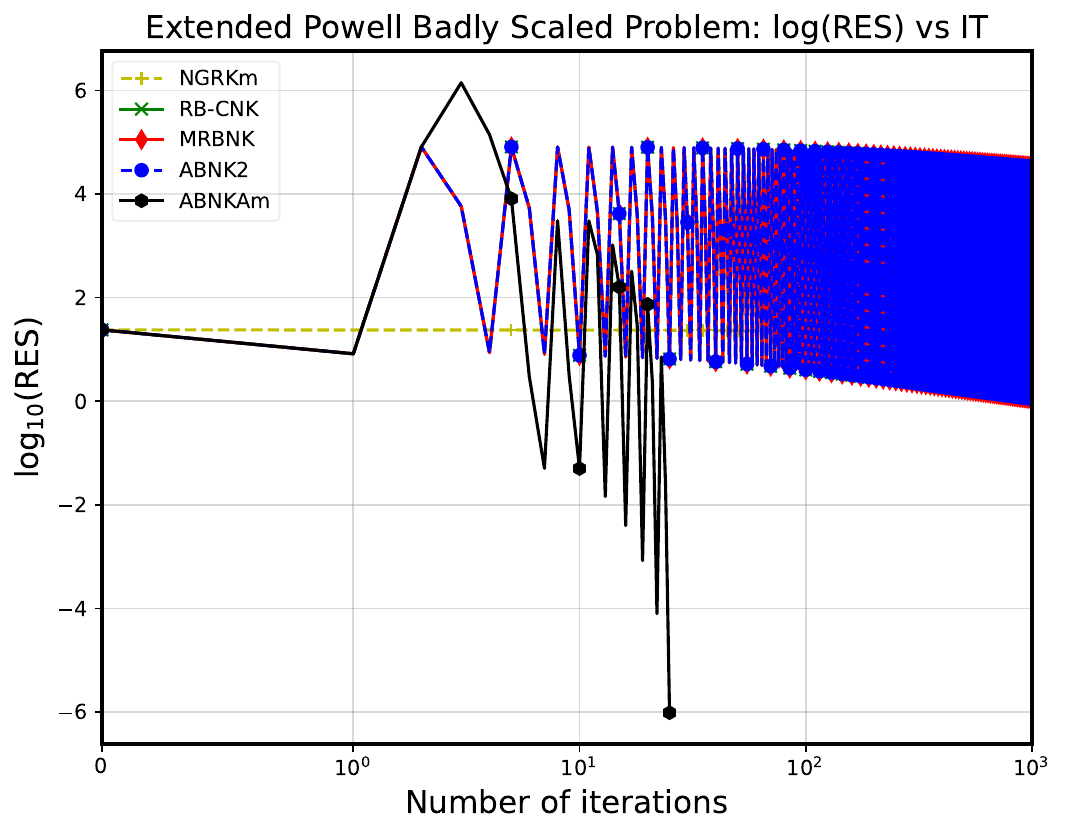}
	\end{minipage}
	\hfill
	\begin{minipage}[t]{0.45\textwidth}
		\includegraphics[width=\textwidth, height=4.5cm]{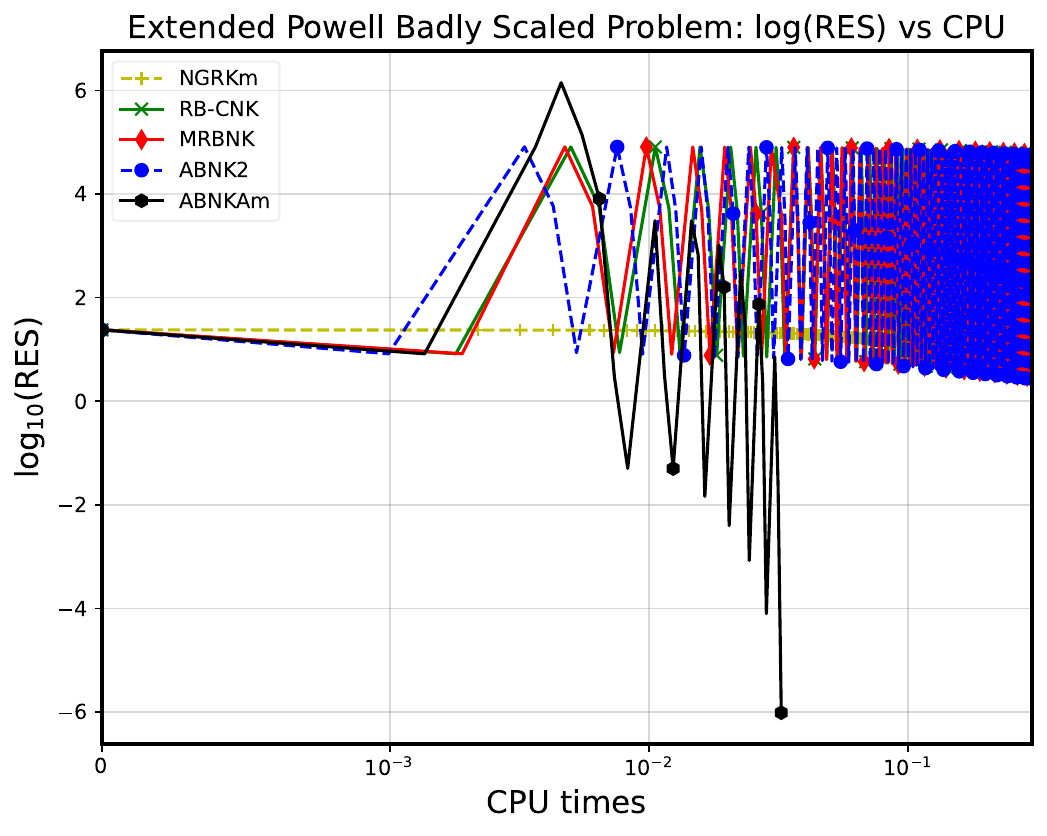}
	\end{minipage}
	\caption{Extended Powell badly scaled problem with $m=10^3$: 
		Curvers of RES against IT and CPU for different methods} 
	\label{fig:EPBS}
\end{figure}

\begin{figure}[htbp]
	\centering
	\begin{minipage}[t]{0.45\textwidth}
		\includegraphics[width=\textwidth, height=4.5cm]{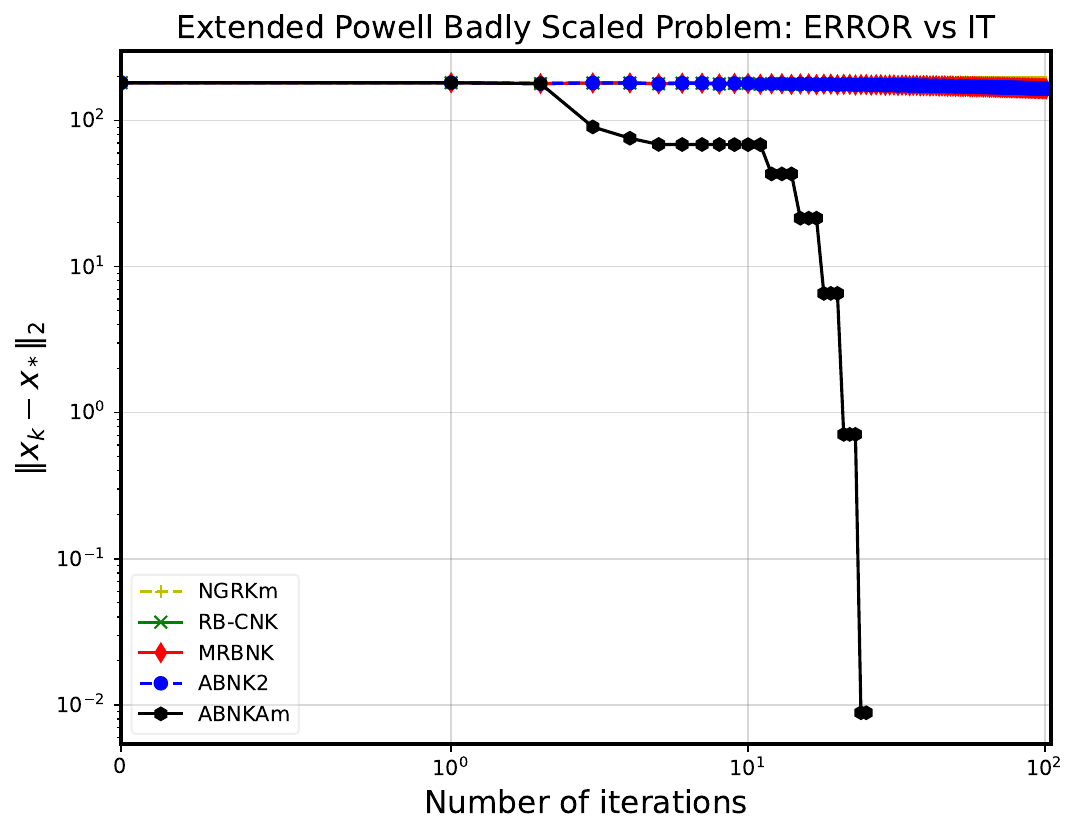}
	\end{minipage}
	\hfill
	\begin{minipage}[t]{0.45\textwidth}
		\includegraphics[width=\textwidth, height=4.5cm]{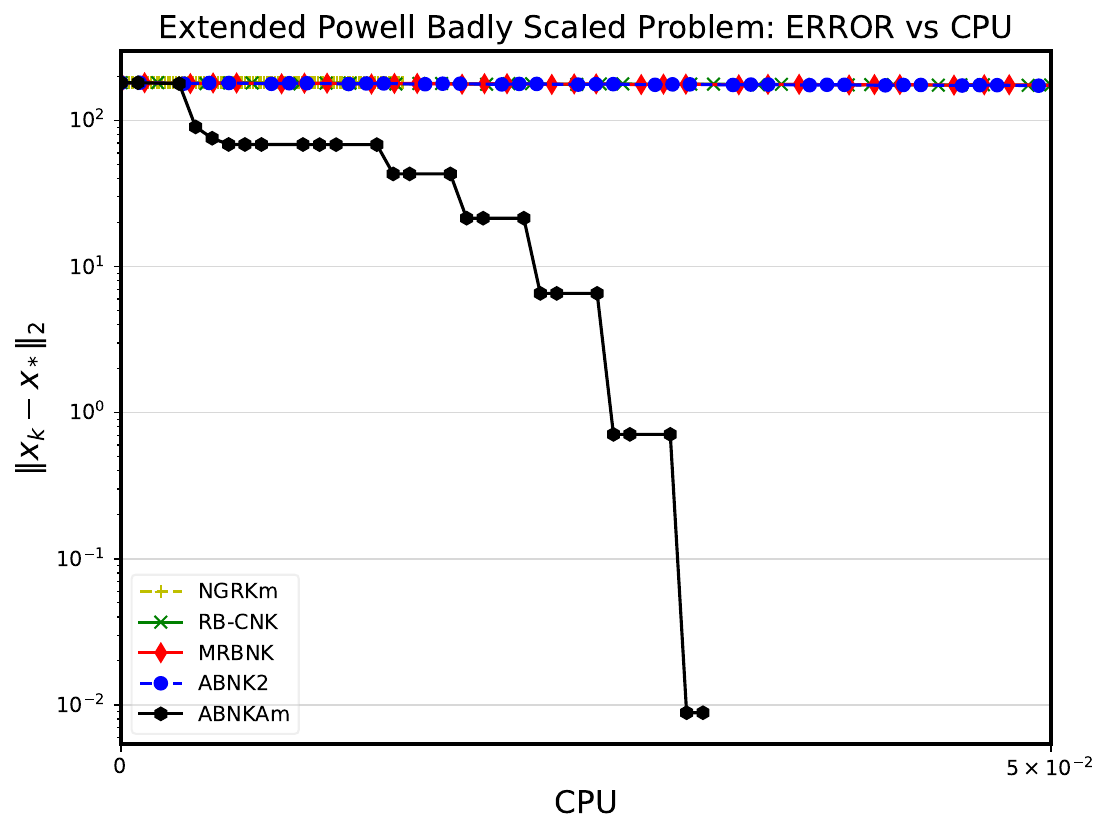}
	\end{minipage}
	\caption{Extended Powell badly scaled problem with $m=1\cdot 10^3$: 
		Curvers of errors against IT and CPU for different methods} 
	\label{fig:EPBS_ERROR}
\end{figure}

Numerical results for the extended Powell badly scaled problem are presented in Table \ref{tab:Extended Powell Badly Scaled Problem} and Figure \ref{fig:EPBS}. The proposed ABNKAm method achieves rapid convergence, requiring only 25-28 iterations across problem scales from $10^3$ to $10^6$. In contrast, all other Kaczmarz-type methods fail to converge, with their residual curves exhibiting persistent oscillations,  a manifestation of the ``seesaw" phenomenon. Although ABNKAm also shows oscillatory behavior, its residual curve maintains an overall descending trend and eventually converges, demonstrating the method's effectiveness for this class of nonlinear problems.

Figure \ref{fig:EPBS_ERROR} further illustrates the evolution of the iteration error $x_k - x_*$ with respect to both iteration count and CPU time. 
The sequence $\{x_k\}$ generated by ABNKAm converges rapidly toward the true solution, while other methods show negligible progress. It is worth noting that iterations with minimal error reduction in ABNKAm correspond to residual increases in Figure \ref{fig:EPBS}, indicating that such residual growth does not necessarily imply divergence. This behavior differs from classical monotonic descent algorithms: although some iterations may show limited immediate progress, they can facilitate subsequent convergence. These results suggest that ABNKAm partially mitigates the ``seesaw" effect, though developing more robust strategies remains an interesting direction for future work.

\subsection*{Further comments on the performance comparison between ABNKAm and other methods}
From the numerical results presented above, we have observed significant variation in performance 
among existing Kaczmarz-type methods across different problems. Specifically, row-action methods (NGRKm, MRNK, and NURK) are substantially less effective than block-oriented approaches (ABNK2, RB-CNK, MRBNK, and ABNKAm), with a notable decline in performance as problem size increases.

For sparse problems (i.e., those with sparse Jacobian matrices, excluding Problem \ref{P3}), the MRBNK method based on update formula \eqref{eq:mrbnk} achieves the best CPU time on Problems \ref{P1} and \ref{P2}. On Problem \ref{P1}, MRBNK requires approximately 90 iterations, compared to 126 for RB-CNK and 2913 for ABNK2. On Problem \ref{P2}, iteration counts 
range between 288 and 332 for MRBNK, between 399 and 470 for RB-CNK, and between 322 and 386 for ABNK2 
across different scales. In contrast, the RB-CNK method, which employs a different greedy criterion than MRBNK but uses the same update formula, shows an advantage on Problem \ref{P4}, converging in about 83 iterations versus 100-120 for MRBNK, while ABNK2 fails to converge completely.

Notably, although the ABNK2 method derived from \eqref{eq:abnk} avoids pseudoinverse computations, its per-iteration cost reduction is offset by increased iteration counts. Moreover, its performance degrades significantly on Problem \ref{P4}, and like all other existing Kaczmarz-type methods, it fails to solve Problem \ref{P5}.
For the dense Jacobian case (Problem \ref{P3}), ABNK2 leverages the averaging strategy to outperform pseudoinverse based methods RB-CNK and MRBNK in computational time. As the problem size grows from $10^3$ to $10^5$, ABNK2's runtime increases from  1 to over 2200 seconds, while RB-CNK and MRBNK increase from 6.3 to over 12000 and from 4.2 to nearly 8000 seconds, respectively.

In contrast to the performance variability exhibited by existing methods, the proposed ABNKAm method consistently outperforms all others in both iteration count and CPU time across all examples (Problems \ref{P1}-\ref{P5}). On sparse Problems \ref{P1} and \ref{P2}, it converges in approximately 9 and 180 iterations, respectively, surpassing MRBNK with speed-up factors of about 15 and 3. For Problem \ref{P4}, it requires only 24 iterations, outperforming RB-CNK by a factor of 4. Most notably, ABNKAm is the only method that successfully solves Problem \ref{P5}, converging within about 28 iterations. On the dense Problem \ref{P3}, it also surpasses ABNK2, achieving a nearly speed-up $2$. In summary, we have seen that ABNKAm converges stably, effectively and 
consistently for all problems in this subsection, while the convergence behavior of all other numerical methods  
is quite uncertain: each method may work well for some nonlinear examples, but may perform very poorly for other examples. 

\section{Concluding remarks}
\label{sec:con}
We have proposed a novel class of averaged block nonlinear Kaczmarz methods enhanced with momentum for solving nonlinear systems. An adaptive strategy for selecting step sizes and momentum parameters has been developed, relying solely on information from the iterative sequence. The proposed ABNKAm method has been validated through extensive numerical experiments, demonstrating not only high effectiveness but also remarkable competitiveness in both iteration count and computational time compared to existing nonlinear Kaczmarz-type methods.
Looking forward, the influence of greedy criteria on ABNKAm performance suggests a promising research direction. Developing specialized greedy criteria or efficient index subset selection strategies tailored to the ABNKAm framework could yield further computational gains, representing an interesting focus for our future work.




\bibliographystyle{siamplain}
\bibliography{references}
\end{document}